\DeclareMathAlphabet{\mathcalligra}{T1}{calligra}{m}{n}
\DeclareFontShape{T1}{calligra}{m}{n}{<->s*[1.5]callig15}{}
\newtheorem{theorem}{Theorem}[section]
\newtheorem*{theoremstar}{Theorem}
\newtheorem{lemma}[theorem]{Lemma}
\newtheorem{proposition}[theorem]{Proposition}
\newtheorem{corollary}[theorem]{Corollary}
\theoremstyle{definition}
\newtheorem{definition}[theorem]{Definition}
\newtheorem{example}[theorem]{Example}
\newtheorem{remark}[theorem]{Remark}
\newtheorem{theorem-definition}[theorem]{Theorem-Definition}
\numberwithin{equation}{section}
\newcommand{\CC} {\mathbb{C}}
\newcommand{\DD} {\mathbb{D}}
\newcommand{\LL} {\mathbb{L}}
\newcommand{\NN} {\mathbb{N}}
\newcommand{\PP} {\mathbb{P}}
\newcommand{\QQ} {\mathbb{Q}}
\newcommand{\ZZ} {\mathbb{Z}}
\newcommand {\shA} {\mathcal{A}}
\newcommand {\shE} {\mathcal{E}}
\newcommand {\shF} {\mathcal{F}}
\newcommand {\shH} {\mathcal{H}}
\newcommand {\shK} {\mathcal{K}}
\newcommand {\shM} {\mathcal{M}}
\newcommand {\shS} {\mathcal{S}}
\newcommand {\shT} {\mathcal{T}}
\newcommand {\shP} {\mathcal{P}}
\newcommand {\sC} {\mathscr{C}}
\newcommand {\sE} {\mathscr{E}}
\newcommand {\sF} {\mathscr{F}}
\newcommand {\sG} {\mathscr{G}}
\newcommand {\sI} {\mathscr{I}}
\newcommand {\sK} {\mathscr{K}}
\newcommand {\sL} {\mathscr{L}}
\newcommand {\sM} {\mathscr{M}}
\newcommand {\sN} {\mathscr{N}}
\newcommand {\sO} {\mathscr{O}}
\newcommand {\sV} {\mathscr{V}}
\newcommand {\sW} {\mathscr{W}}
\newcommand {\foh}  {\mathfrak{h}}
\newcommand{\blank}{\underline{\hphantom{A}}}
\newcommand {\codim} {\operatorname{codim}}
\newcommand {\Coker} {\operatorname{Coker}}
\newcommand {\Ext} {\operatorname{Ext}}
\newcommand{\sExt}{\mathscr{E} \kern -1pt xt}
\newcommand{\Hilb}{\mathrm{Hilb}}
\newcommand {\Hom} {\operatorname{Hom}}
\newcommand {\sHom}{\mathscr{H}\kern-5pt\mathcalligra{om}}
\newcommand {\Id} {\operatorname{Id}}
\newcommand {\im} {\operatorname{im}}
\renewcommand {\Im} {\operatorname{Im}}
\newcommand {\Jac} {\operatorname{Jac}}
\newcommand {\kk} {\Bbbk}
\renewcommand {\ker } {\operatorname{Ker}}
\newcommand {\Ker} {\operatorname{Ker}}
\newcommand {\Pic} {\operatorname{Pic}}
\newcommand {\Proj} {\operatorname{Proj}}
\newcommand {\pr} {\operatorname{pr}}
\newcommand {\rank} {\operatorname{rank}}
\newcommand {\Span} {\operatorname{Span}}
\newcommand {\Spec} {\operatorname{Spec}}
\newcommand {\Sym} {\operatorname{Sym}}
\newcommand{\sTor}{\mathscr{T} \kern -3pt or}
\newcommand {\Tot} {\operatorname{Tot}}
\newcommand {\Bl} {\operatorname{Bl}}
\newcommand {\CH} {\operatorname{CH}}
\newcommand {\IH} {\operatorname{IH}}
\title[]{On the Chow theory of projectivizations}
\author[Q.Y.\ JIANG]{Qingyuan Jiang}
\subjclass[2010]{Primary: 	14C15, 14C25, 	14E05, 14H51, 	14D06}
\keywords{Chow groups, motives, projectivizations, flips, curves, nested Hilbert schemes}
\address{School of Mathematics, University of Edinburgh, JCMB, Peter Guthrie Tait Road, Edinburgh EH9 3FD, UK.}
\email{Qingyuan.Jiang@ed.ac.uk}
\begin{document}

\begin{abstract} In this paper, we prove a decomposition result for the Chow groups of projectivizations of coherent sheaves of homological dimension $\le 1$. In this process, we establish the decomposition of Chow groups for the cases of Cayley's trick and standard flips. Moreover, we apply these results to study the Chow groups of symmetric powers of curves, nested Hilbert schemes of surfaces, and the varieties resolving Voisin maps for cubic fourfolds.
\vspace{-5mm} 
\end{abstract}

\maketitle

\section{Introduction}
Let $X$ be a Cohen--Macaulay scheme of pure dimension, and $\sG$ a coherent sheaf on $X$ of rank $r$ and homological dimension $\le 1$, that is, locally over $X$, there is a two-step resolution $0 \to \sF \to \sE \to \sG \to 0$, where $\sF$ and $\sE$ are finite locally free sheaves. (If $X$ is regular, this condition on $\sG$ is equivalent to $\sExt^i_X(\sG,\sO_X) =0$ for all $i \ge 2$.) The projectivization $\pi \colon \PP(\sG) : = \Proj_X \Sym_{\sO_X}^\bullet \sG \to X$ of $\sG$ is generically a projective bundle with fiber $\PP^{r-1}$; However, the dimension of the fiber of $\pi$ jumps along the degeneracy loci (see \S \ref{sec:deg}) of $\sG$.

The derived category of $\PP(\sG)$ was studied in \cite{JL18}, where we prove (under certain regularity and dimension conditions) that there is a semiorthogonal decomposition:
	$${\rm D^b_{coh}}(\PP(\sG)) = \big \langle {\rm D^b_{coh}}(\PP(\sExt^1(\sG,\sO_X))), ~{\rm D^b_{coh}}(X)\otimes \sO(1), \ldots, {\rm D^b_{coh}}(X)\otimes \sO(r)\big \rangle.$$ 
(For a space $Y$, ${\rm D^b_{coh}}(Y)$ stands for its bounded derived category of coherent sheaves). The theorem states that the (right) orthogonal of the ``projective bundle part" of ${\rm D^b_{coh}}(\PP(\sG))$ is given by the derived category of another projectivization $\PP(\sExt^1(\sG,\sO_X))$, which is a Springer type partial desingularization of the singular locus of $\sG$. See \cite{JL18} for more details

In this paper, we establish the Chow-theoretic version of the above formula:

\begin{theoremstar}(See Theorem \ref{thm:main}) Let $X$ and $\sG$ be as above.  Assume either 
	\begin{enumerate}
	\item[(A)] $\PP(\sG)$ and $\PP(\sExt^1(\sG,\sO_X))$ are non-singular and quasi-projective, and the degeneracy loci of $\sG$ satisfy a weak dimension condition $(\ref{eqn:weak.dim})$; Or
	\item[(B)] All degeneracy loci of $\sG$ (are either empty or) have expected dimensions. 
	\end{enumerate}
Then for each $k \ge 0$, there is an isomorphism of integral Chow groups:
	\begin{align*} 
	\CH_k(\PP(\sG)) \simeq \CH_{k-r}(\PP(\sExt^1(\sG,\sO_X)))  \oplus \bigoplus_{i=0}^{r-1} \CH_{k-(r-1)+i}(X).
	\end{align*}
\end{theoremstar}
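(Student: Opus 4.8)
The plan is to deduce the statement from two more elementary Chow-group decompositions --- one for the \emph{Cayley's trick} configuration and one for \emph{standard flips} --- and then to apply the former \emph{twice} to a single symmetric incidence divisor. The first step is the reduction to a resolution. Working Zariski-locally, or globally when $X$ is quasi-projective (which covers (A)), fix a $2$-step resolution $0\to\sF\xrightarrow{\,d\,}\sE\to\sG\to 0$ with $\sE,\sF$ locally free of ranks $n$ and $m=n-r$. The surjection $\sE\twoheadrightarrow\sG$ realizes $\PP(\sG)$ as the zero scheme $Z(s)\subset P:=\PP_X(\sE)\xrightarrow{\,\pi\,}X$ of the section $s\in H^0(P,\pi^*\sF^\vee\otimes\sO_P(1))$ induced by $d$, i.e. the zero scheme of a section of a rank-$m$ bundle on the $\PP^{n-1}$-bundle $P$; dualizing, $\sExt^1(\sG,\sO_X)=\Coker(\sE^\vee\xrightarrow{d^\vee}\sF^\vee)$, and likewise $\PP(\sExt^1(\sG,\sO_X))=Z(s')\subset P':=\PP_X(\sF^\vee)$ is the zero scheme of a section $s'$ of a rank-$n$ bundle on the $\PP^{m-1}$-bundle $P'$. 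Under (B) both $Z(s)$ and $Z(s')$ have the expected codimension; under (A) the jumping of these zero loci is governed by the degeneracy loci of $d$, which are controlled by $(\ref{eqn:weak.dim})$.

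Next come the two building blocks. The Chow-group \emph{Cayley's trick lemma}: for a Cohen--Macaulay pure-dimensional $S$, a rank-$N$ bundle $\shV$ and a section $t$ whose zero scheme $Y=Z(t)$ has the expected codimension $N$ (or satisfies the weak dimension hypothesis), the incidence divisor $H:=Z(q^*t)\subset\PP_S(\shV)$, where $q^*t\in H^0(\sO(1))$, carries a natural decomposition
$$CH_\ast(H)\ \cong\ \Big(\bigoplus_{i=0}^{N-2}\xi^{\,i}\cdot q^\ast CH_\ast(S)\Big)\ \oplus\ CH_{\ast-(N-1)}(Y),\qquad \xi=c_1(\sO(1)).$$
I would obtain this from the localization sequence for $q^{-1}(Y)=\PP_Y(\shV|_Y)\hookrightarrow H\hookleftarrow H|_{S\setminus Y}$ together with the projective-bundle formula over $S\setminus Y$, the point being that along the exceptional locus the powers $\xi^0,\dots,\xi^{N-2}$ are already restrictions from $S$, so that only the top power $\xi^{N-1}$ survives as a genuinely new class; when $t$ is not regular one replaces $\shV|_Y$ by the normal cone $C_{Y/S}$ and argues through the deformation to the normal cone, with $(\ref{eqn:weak.dim})$ ensuring the relevant cones are equidimensional enough for the sequences to split integrally. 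The second block is the Chow-group \emph{standard-flip lemma} $CH_\ast(X^-)\cong CH_\ast(X^+)\oplus\bigoplus_\bullet CH_{\ast-\bullet}(W)$, obtained by applying the blow-up formula to the two blow-up structures on the common resolution of the flip and matching summands; this is what makes the non-generic Cayley case go through and what keeps the cancellations below canonical.

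To assemble the theorem, observe that the incidence divisor $H$ attached to $(P,\shV,s)$ lives in $\PP_P(\pi^*\sF^\vee\otimes\sO_P(1))\cong\PP_X(\sE)\times_X\PP_X(\sF^\vee)$, and a direct computation identifies it as $H=\{(x,[\sE_x\twoheadrightarrow L],\ \lambda\subset\sF_x):\ d(\lambda)\subseteq\ker(\sE_x\to L)\}$; from this one reads off that the two projections exhibit $H$ simultaneously as the Cayley incidence divisor of $(P,\shV,s)$ over $P$ (jumping along $\PP(\sG)$) and as that of $(P',\shV',s')$ over $P'$ (jumping along $\PP(\sExt^1(\sG,\sO_X))$). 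Applying the Cayley lemma to each projection gives
\begin{align*}
CH_{\ast-(m-1)}(\PP(\sG))\oplus\bigoplus_{i=0}^{m-2}CH_{\ast-i}(P)\ &\cong\ CH_\ast(H)\\
&\cong\ \bigoplus_{j=0}^{n-2}CH_{\ast-j}(P')\oplus CH_{\ast-(n-1)}(\PP(\sExt^1(\sG,\sO_X))).
\end{align*}
Substituting $CH_\ast(P)=\bigoplus_{a=0}^{n-1}CH_{\ast-a}(X)$ and $CH_\ast(P')=\bigoplus_{b=0}^{m-1}CH_{\ast-b}(X)$ and cancelling the common copies of $CH_\ast(X)$ --- the identity $\tfrac{(1-t^{n-1})(1-t^m)-(1-t^{m-1})(1-t^n)}{(1-t)^2}=t^{m-1}+\cdots+t^{n-2}$ records exactly which shifts remain --- and reindexing by $k=\ast-(m-1)$ leaves precisely $CH_k(\PP(\sG))\cong CH_{k-r}(\PP(\sExt^1(\sG,\sO_X)))\oplus\bigoplus_{i=0}^{r-1}CH_{k-(r-1)+i}(X)$.

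The main obstacle is to make all of this \emph{integral and correctly graded}. The cancellation of the common $CH_\ast(X)$-summands is not formal for non-finitely-generated groups, so one must identify the ``common part'' as one and the same intrinsic subgroup of $CH_\ast(H)$ --- the subgroup generated by the $\xi$-powers and by pullbacks from $X$ --- with respect to \emph{both} Cayley-lemma decompositions, which forces the Cayley lemma to be proved with enough naturality (realized by explicit correspondences supported on $H$ and on $\PP_Y(\shV|_Y)$) that its two instances agree there. Coupled with this is the non-generic case permitted by (A), where $Z(s)$ and $Z(s')$ may acquire excess components so that ``$\shV|_Y=$ normal bundle'' fails and one needs the full deformation-to-the-normal-cone and refined-Gysin machinery (and, where no global resolution exists, the standard-flip lemma for patching) to guarantee that the localization sequences are short exact, split, and compatible with the pullbacks and pushforwards used in the bookkeeping; reducing sheaves with deep degeneracy loci to the Cayley situation in the first place may further require iterated blow-ups along the degeneracy strata, with the descent back to $X$ effected through the Segre-class projectors realizing $CH_\ast(X)$ and $CH_\ast(D_i(\sG))$ as canonical direct summands upstairs.
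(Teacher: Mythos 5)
Your geometric setup is correct and your plan is genuinely different from the paper's. You propose to embed $\PP(\sG)$ as the singular locus of the $(1,1)$-incidence divisor $H\subset\PP_X(\sE)\times_X\PP_X(\sF^\vee)$ and observe, correctly, that the \emph{same} $H$ is the Cayley hypersurface for $(\,P=\PP(\sE),\,\pi^*\sF^\vee(1),\,s\,)$ and for $(\,P'=\PP(\sF^\vee),\,\pi'^*\sE(1),\,s'\,)$; applying the Cayley decomposition twice and ``cancelling'' the $CH(X)$-summands would give the theorem. The paper does something else entirely: it works directly with the much smaller correspondence $\Gamma=\PP(\sG)\times_X\PP(\sK)$ (which, note, is a proper closed subscheme of your $H$ of strictly smaller dimension, not $H$ itself), and proves $\Gamma_*\Gamma^*=\pm\Id$ and $\Gamma^*\Gamma_*=\pm\Id$ on $CH(\PP(\sG))_{\mathrm{tor}}$ either by Chow's moving lemma plus a stratum-by-stratum dimension count (under (A), following Fu--Wang and Lee--Lin--Wang), or by stratifying $X$ by the degeneracy loci and reducing each stratum to a ``virtual flip'' computation, integrating via the excision sequences (under (B)). The explicit excess-bundle identity $\pi^*\pi'_*=r_{+*}(c_1(\sO(1,1))\cap r_-^*(-))$ plays the role your divisor $H$ is meant to play, but without any auxiliary cancellation.

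The genuine gap in your proposal is the cancellation step, and your proposed fix does not close it. Substituting the projective-bundle formula for $P$ and $P'$, the ``$CH(X)$-block'' of the first Cayley decomposition of $CH(H)$ is the free $CH(X)$-module on the monomials $\zeta^{a}\zeta'^{i}$ with $(a,i)\in[0,n-1]\times[0,m-2]$, while that of the second is spanned by $\zeta^{j}\zeta'^{b}$ with $(j,b)\in[0,n-2]\times[0,m-1]$. These index sets are \emph{not} nested and the two blocks are genuinely different subgroups of $CH(H)$: each has $(m-1)$ (resp.\ $(n-1)$) ``extra'' generators not appearing in the other. So you cannot ``identify the common part as one and the same intrinsic subgroup''; the two projectors onto the exceptional parts do not agree on a common complement, and, since these Chow groups are far from finitely generated, $A\oplus B\cong A'\oplus B'$ with $A,A'$ of the ``same size'' gives nothing about $B$ versus $B'$. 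To make the bookkeeping work you would have to show, for instance, that the composite correspondence $\PP(\sK)\xrightarrow{j'_*p'^*}CH(H)\xrightarrow{\mathrm{proj}_1}CH(\PP(\sG))$ is a split monomorphism onto a complement of the pullback part --- but this composite is supported on $j^{-1}(j'(\PP(\sE)\times_X\PP(\sK)))$, whose top-dimensional part is precisely $\Gamma=\PP(\sG)\times_X\PP(\sK)$, and that intersection is excessive inside $H$, so you would end up having to carry out essentially the paper's analysis of $\Gamma_*$ and $\Gamma^*$ anyway. There are also secondary issues (under (B) there may be no global resolution $\sF\to\sE\to\sG$, so the Cayley setup must be patched; and your appeal to deformation to the normal cone for the non-l.c.i.\ case is left unspecified), but the central missing ingredient is a correct mechanism for extracting the exceptional summands from the two overlapping but unequal Cayley decompositions of $CH(H)$.
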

Since the isomorphism of the theorem commutes with product with another space, by Manin's identity principle, if $\PP(\sG)$, $\PP(\sExt^1(\sG,\sO_X))$ and $X$ are smooth and projective over the ground field $\kk$, then there is an isomorphism of integral (pure effective) Chow motives:
	\begin{align*} 
		\foh(\PP(\sG)) \simeq \foh(\PP(\sExt^1(\sG,\sO_X)))(r) \oplus \bigoplus_{i=0}^{r-1} \foh(X)(i). 
	\end{align*}
See Corollary \ref{cor:main.motive}. Note that this result compares nicely with Vial's work \cite{Vial} on $\PP^{r-1}$-fibrations; In our case, $\PP(\sG)$ is a {\em generic} $\PP^{r-1}$-fibration. By taking a cohomological realization, for example by taking the Betti cohomology if $\kk \subset \CC$, the isomorphism of Motives induces an isomorphism of rational Hodge structures:
	\begin{align*} 
		H^n(\PP(\sG), \QQ) \simeq H^{n-2r}(\PP(\sExt^1(\sG,\sO_X)), \QQ)   \oplus\bigoplus_{i=0}^{r-1} H^{n-2i}(X,\QQ), \qquad \forall n \ge 0.
	\end{align*}	

This paper provides two approaches to proving the above theorem, each under one of the conditions (A) and (B). The idea behind both approaches is that one could view the projectivization phenomenon as a combination of Cayley's trick and flips. 

We study the Chow theory for Cayley's trick in \S \ref{sec:Cayley} (see Theorem \ref{thm:Cayley} and Corollary \ref{cor:Cayley}), and the Chow theory of standard flips  in \S \ref{sec:flips} (see Theorem \ref{thm:flip} and Corollary \ref{cor:flip}). These results are of independent interest on their own. For example, it follows from Theorem \ref{thm:Cayley} and Corollary \ref{cor:Cayley} that the Chow group (resp. motive) of every complete intersection variety can be split embedded into that of a Fano variety, 
see Example \ref{example:fano} (cf. \cite{KKLL}).

First examples of the theorem are: (i) universal $\Hom$ spaces, see \S \ref{sec:Hom}, (ii) flops from Springer type resolutions of determinantal hypersurfaces, see \S \ref{sec:springer}, and (iii) a blowup formula for blowing up along Cohen--Macaulay codimension $2$ subschemes, see \S \ref{sec:CM2}.
 
\subsection*{Applications}
The following applications parallel the applications of the projectivization formula in the study of derived categories \cite{JL18}. 
\begin{enumerate}[leftmargin=*]
	\item {\em Symmetric powers of curves \S\ref{sec:SymC}.} Let $C$ be a smooth projective curve of genus $g \ge 1$ and denote by $C^{(k)}$ the $k$-th symmetric power. For any $0 \le n \le g-1$, the relationships between the derived category of $C^{(g-1+n)}$ and $C^{(g-1-n)}$ (and also the Jacobian variety $\Jac(C)$) was established by Toda \cite{Tod} using wall--crossing of stable pair moduli, and later by \cite{JL18,BK19} using the projectivization formula. The main theorem of this paper implies the corresponding Chow-theoretic version of the formula: for any $k \ge 0$, there is an isomorphism of integral Chow groups
	\begin{align*}
	 \CH_{k}(C^{(g-1+n)}) \simeq \CH_{k-n}(C^{(g-1-n)}) \oplus \bigoplus_{i=0}^{n-1} \CH_{k-(n-1)+i}(\Jac(C)),
	\end{align*}
and a similar decomposition for integral Chow motives, see Corollary \ref{cor:SymC}.

	\item {\em Nested Hilbert schemes of surfaces \S \ref{sec:Hilb}}. Let $S$ be a smooth quasi-projective surface, and denote by $\Hilb_{n}(S)$ the Hilbert scheme of $n$ points on $S$, by convention $\Hilb_{1}(S) = S$, $\Hilb_{0} = {\rm point}$. Denote $\Hilb_{n,n+1}(S)$ the nested Hilbert scheme. Then projectivization formula of derived categories \cite{JL18} can be applied to obtain a semi-orthogonal decomposition of $D(\Hilb_{n,n+1}(S))$, see Belmans--Krug \cite{BK19}. In this paper, we show that for any $k \ge 0$, there is an isomorphism of integral Chow groups:
	\begin{align*}
	\CH_{k}(\Hilb_{n,n+1}(S)) &\simeq \CH_{k-1}(\Hilb_{n-1,n}(S)) \oplus \CH_{k}(\Hilb_n(S) \times S) \\
	& \simeq \bigoplus_{i=0}^n \CH_{k-i}(\Hilb_{n-i}(S) \times S),
	\end{align*}
and a similar decomposition for Chow motives, see Corollary \ref{cor:Hilb}.	
	
	\item {\em Voisin maps \S \ref{sec:Voisin}.} Let $Y$ be a cubic fourfold not containing any plane, let $F(Y)$ be the Fano variety of lines on $Y$, and let $Z(Y)$ be the corresponding LLSvS eightfold \cite{LLSVS17}. Voisin \cite{Voi16} constructed a rational map $v \colon F(Y) \times F(Y) \dashrightarrow Z(Y)$ of degree six, Chen \cite{Chen} showed that the Voisin map $v$ can be resolved by blowing up the indeterminacy locus $Z = \{ (L_1,L_2) \in F(Y) \times F(Y) \mid L_1 \cap L_2 \ne \emptyset\}$, and the blowup variety is a natural relative $Quot$-scheme over $Z(Y)$ if $Y$ is very general. The main theorem can be applied to this case, and implies that for any $k \ge 0$, there is an isomorphism of Chow groups:
	\begin{align*}
	\CH_k(\Bl_Z ( F(Y) \times F(Y))) \simeq \CH_{k-1}(\widetilde{Z}) \oplus \CH_k(F(Y) \times F(Y)),
	\end{align*}
where $\widetilde{Z} = \PP(\omega_Z)$ is a Springer type (partial) resolution of the indeterminacy locus $Z$, which is an isomorphism over $Z \backslash \Delta_2$, and a $\PP^1$-bundle over the type II locus $\Delta_2 = \{L \in \Delta \simeq F(Y) \mid \sN_{L/Y} \simeq \sO(1)^{\oplus 2} \oplus \sO(-1)\}$ which is an algebraic surface. See Corollary \ref{cor:Voisin}. 
\end{enumerate}

The results of this paper could also be applied to many other situations of moduli spaces, for example, moduli of sheaves on surfaces \cite{Neg1, Neg}, and the moduli  spaces of extensions of stable objects in K3 categories, which are generalizations of the varieties resolving Voisin's maps \cite{Voi16, Chen}.  Another such example is provided by the pair of Thaddeus moduli spaces \cite{Tha} $M_C(2,\sL) \to N_C(2,\sL)$ and $M_C(2,\sL^\vee \otimes \omega_C) \to N_C(2,\sL)$ studied by Koseki and Toda \cite{KT}. (Here, $\sL$ is a line bundle of odd degree $d>0$, $N_C(2,\sL)$ is the moduli space of rank $2$ semistable vector bundles over a curve $C$, with determinant $\sL$, and  $M_C(2,\sL)$ is the space $M_{\omega}$ of \cite{Tha}, where $\omega = [\frac{d-1}{2}]$). The results of this paper on flips \S \ref{sec:flips} and projectivizations Theorem \ref{thm:main} would shed light on the study of Chow theory of $N_C(2,\sL)$. \footnote{See \cite{FHL} for recent results in this direction about rational Chow motives; Our results here might also be helpful to obtain results for integral coefficients.}


\subsection*{Convention} Throughout this paper, $X$ is a Noetherian scheme of pure dimension, and $\sG$ is a coherent sheaf over $X$. We say that $\sG$ has rank $r$ if the rank of $\sG(\eta) : = \sG \otimes \kappa(\eta)$ is $r$ at the generic point $\eta$ of each irreducible component of $X$. Assume all schemes in consideration are defined over some fixed ground field $\kk$. The terms ``locally free sheaves" and ``vector bundles" will be used interchangeably. We use {\bf Grothendieck's notations}: for a coherent sheaf $\sF$ on a scheme $X$, denote by $\PP_X(\sF)  =\Proj_X \Sym_{\sO_X}^\bullet \sF$ its projectivization; we will write $\PP(\sF)$ if the base scheme is clear from context. For a vector bundle $V$, we also use $\PP_{\rm sub}(V) : = \PP(V^\vee)$ to denote the moduli space of $1$-dimensional linear subbundles of $V$. 

For motives, we use the {\bf covariant} convention of \cite{KMP, Mu1, Mu2, Vial, Vial15}. In particular, \cite{KMP} contains a dictionary for translating between covariant and contravariant conventions. For a smooth projective variety $X$ over a field $\kk$, denote by $\foh(X)$ its class $(X, \Id_X,0)$ in Grothendieck's category of integral Chow motives of smooth projective varieties over $\kk$. Notice that under the {\em covariant} convention, for a morphism $f \colon X \to Y$ of smooth projective varieties, $\Gamma_f$ induces the pushforward map $f_* \colon \foh(X) \to \foh(Y)$ and $[\Gamma_f^t]$ induces the pullback map $f^* \colon \foh(Y) \to \foh(X)(\dim Y - \dim X)$. Moreover, $\foh(\PP^1) = 1 \oplus \LL = 1 \oplus 1(1)$, where $1 = \foh(\Spec k)$, and  $\LL = 1(1)$ is the Lefschetz motive. In particular, the covariant Tate twist coincide with tensoring with $\LL$, i.e., $\foh(X)(i) = \foh(X) \otimes \LL^i$ for all $i \in \ZZ$. Furthermore, $\CH^\ell(\foh(X)(n)) = \CH^{\ell - n}(X)$ and $\CH_{k}(\foh(X)(n)) = \CH_{k-n}(X)$. We will use $h$ to denote the action $c_{1}(\sO(1)) \cap (\blank)$ on motives when the line bundle $\sO(1)$ is clear from the context.

\subsection*{Acknowledgement} 
The author would like to thank Arend Bayer for many helpful discussions, thank especially Huachen Chen for bringing his attention to this problem and many helpful discussions on Voisin maps and his work \cite{Chen}, and thank Dougal Davis for helpful conversations. 
This project started during a workshop at Liverpool, for which the author thanks the organisers Alice Rizzardo and Theo Raedschelders for hospitality. The author also thank the referee for the careful reading and many helpful suggestions which greatly improve the exposition of the paper. This work is supported by the Engineering and Physical Sciences Research Council (EPSRC) [EP/R034826/1].

\section{Preliminaries}

\subsection{Degeneracy loci} \label{sec:deg} Standard references are \cite{FP, Ful, GKZ, GG, Laz04}.
\begin{definition}
\begin{enumerate}[leftmargin=*]
	\item  Let $\sG$ be a coherent sheaf of (generic) rank $r$ over a scheme $X$. For an integer $k \in \ZZ$, the {\em degeneracy locus of $\sG$ of rank $\ge k$} is defined to be
	$$X^{ \ge k}(\sG): = \{x \in X \mid \rank \sG(x) \ge k\},$$
where $\sG(x) := \sG_x \otimes_{\sO_{X,x}} \kappa(x)$ is the fiber of $\sG$ at $x \in X$. Notice that $X^{ \ge k}(\sG) = X$ if $k \le r$; $X_{\mathrm{sg}}(\sG) : = X^{\ge r+1}(\sG)$ is called {\em first degeneracy locus} (or the {\em singular locus}) of $\sG$. 
	\item Let $\sigma: \sF \to \sE$ a morphism of $\sO_X$-modules between locally free sheaves $\sF$ and $\sE$ on $X$. For an integer $\ell$, the {\em degeneracy locus of $\sigma$ of rank $\ell$} is defined to be
	$$D_\ell(\sigma) := \{ x \in X \mid \rank \sigma (x) \le \ell \},$$
where $\sigma(x) := \sigma_x  \otimes_{\sO_{X,x}} \kappa(x) \colon \sF(x) \to \sE(x)$ is the map induced by $\sigma$ on the fibers.
\end{enumerate}
\end{definition}
The degeneracy loci $X^{\ge k}(\sG)$ and $D_\ell(\sigma)$ have natural closed subscheme structures given by Fitting ideals, see \cite[\S 7,2]{Laz04}. The two notions are related as follows: let $\sigma \colon \sF \to \sE$ be an $\sO_X$-module map between finite locally free sheaves, and let $\sG: = \Coker (\sigma)$ be the cokernel, then $X^{\ge k}(\sG) = D_{\rank \sE - k}(\sigma)$ as closed subschemes of $X$.

The expected codimension of $D_{\ell}(\sigma) \subset X$ is $(\rank \sE -\ell)(\rank \sF -\ell)$ (if $\ell \le \min\{\rank \sE, \rank \sF\}$). If $\sG$ has homological dimension $\le 1$ and rank $r$, for example if $\sG= \Coker (\sF \xrightarrow{\sigma} \sE)$ is the cokernel of an injective map of $\sO_X$-modules between finite locally free sheaves,  then for any $i \ge 0$, the expected codimension of $X^{\ge r+i}(\sG) \subset X$ is $i(r+i)$.

In the universal local situation where $X=\Hom_\kk(W,V)$ is the total space of maps between two vector spaces $W$ and $V$ over a field $\kk$, there is a tautological map $\tau \colon W \otimes \sO_X \to V \otimes \sO_X$ over $X$, such that $\tau(A) = A$ for $A \in \Hom(W,V)$. 

\begin{lemma}[\cite{FP, GKZ, GG}]  \label{lem:normalbundle:Hom} Let $X = \Hom_\kk(W,V)$, denote $D_\ell  = D_\ell(\tau) \subseteq X$ the degeneracy locus of the tautological map $\tau$ of $\rank \ell$. Then for any $0 \le \ell \le \min\{\rank W, \rank V\}$, the singular locus of $D_\ell$ is $D_{\ell-1}$. Furthermore, for any regular point $A \in D_\ell \backslash D_{\ell-1}$:
	\begin{enumerate}
		\item \label{lem:normalbundle:Hom-1} 
		The tangent space of $D_\ell$ at $A$ is $T_{A} D_\ell = \{T \in \Hom(W,V) \mid T(\ker A) \subseteq \Im A\}$.
		\item \label{lem:normalbundle:Hom-2} 
		The normal space of $D_\ell$ to $X$ at $A$ is $N_{D_\ell} X |_{A} = \Hom (\Ker A, \Coker A)$.
	\end{enumerate}
\end{lemma}
\begin{proof} See \cite[\S 5.1, p. 54-55]{FP}, or \cite[Lemma 4.12]{GKZ}, or \cite[Ex. V (4), p. 145]{GG}.
\end{proof}

In general, let $\sigma: \sF \to \sE$ be a map between vector bundles over a scheme $X$. For a fixed integer $\ell$, regarding the open degeneracy locus $D: =  D_\ell(\sigma) \backslash D_{\ell-1}(\sigma)$, we have the following:

\begin{lemma} \label{lem:normalbundle} Assume $X$ is a Cohen-Macaulay $\kk$-scheme, and $D: =  D_\ell(\sigma) \backslash D_{\ell-1}(\sigma) \subset X$ has the expected codimension $(\rank \sE -\ell)(\rank \sF -\ell)$. Then $\sigma|_D \colon \sE|_D \to \sF|_D$ has constant rank $\ell$ over $D$; $K: = \Ker \sigma|_D$ and $C: = \Coker \sigma|_D$ are locally free sheaves over $D$ of ranks $\rank \sE - \ell$ and $\rank \sF - \ell$ respectively. Moreover, $D \subset X$ is a locally complete intersection subscheme with normal bundle $N_{D/X} \simeq K^\vee \otimes C$.
\end{lemma}

\begin{proof} First, we prove the lemma for the total Hom space $H = |\Hom_X(\sF,\sE)|$. Denote $\pi \colon H = |\Hom_X(\sF,\sE)| \to X$ the projection, and let $\DD_\ell : = D_\ell(\tau_H)\subset H$ be the degeneracy locus for the tautological map $\tau_H \colon \pi^* \sF \to \pi^* \sE$. As the statement is local, we may assume $X = \Spec A$,  $\sF = W \otimes_\kk A$, $\sE = V \otimes_\kk A$, where $A$ is a $\kk$-algebra,$W, V$ are $\kk$-vector spaces. Then $H = \Hom(W, V) \times_\kk X$ is the flat base-change of $\Hom_\kk(W,V)$ along $X \to \Spec \kk$, and $\DD := \DD_\ell \backslash \DD_{\ell-1} = D \times_\kk X$. The desired result holds for $H$ and $\DD$ by Lemma \ref{lem:normalbundle:Hom}.

In general, the map $\sigma \colon \sF \to \sE$ induces a section map $s_{\sigma} \colon X \to H$, such that $\sigma = s_{\sigma}^* \tau_H$ and $D = \DD \times_{X} H$. Since $s$ is the section of a smooth separated morphism, it is a regular closed immersion. Since $H$ and $X$ are Cohen-Macaulay, $\DD \hookrightarrow H$ is a regular immersion, and the intersection $D = \DD \times_X  H \hookrightarrow X$ has the expected codimension, therefore the inclusion $D \hookrightarrow X$ is also a regular immersion, with normal bundle $N_{D/X} = s_{\sigma}^* N_{\DD/H}$. Finally, $s_{\sigma}^* N_{\DD/H} = K^\vee \otimes C$ holds since $K^\vee =  s_{\sigma}^* \Coker(\tau_H^\vee)$ and $C = s_{\sigma}^* \Coker(\tau_H)$.
\end{proof}

\subsection{Chow groups of projective bundles}
Let $X$ be a scheme, and $\sE$ a locally free sheaf of rank $r$ on $X$. Denote $\pi \colon \PP(\sE): = \Proj(\Sym^\bullet \sE) \to X$ the projection. Notice that our convention $\PP(\sE) = \PP_{\rm sub}(\sE^\vee)$ is dual to Fulton's \cite{Ful}. For simplicity, from now on we will denote $\zeta = c_1(\sO_{\PP(\sE)}(1))$, and use the notation $\zeta^i  \cdot \beta : =c_1(\sO_{\PP(\sE)}(1))^i \cap \beta$, where $\beta \in \CH(\PP(\sE))$, to denote the cap product. For each $i \in [0,r-1]$, we introduce the following notations:
	$$\pi_i^*(\blank) = \zeta^i \cdot \pi^*(\blank) \colon \CH_{k-(r-1)+i}(X) \to \CH_k(\PP(\sE)), \quad \forall k \in \ZZ.$$	

The following results are summarised and deduced from \cite[Proposition 3.1, Theorem 3.3]{Ful} but presented in a way that fits better into our current work.

\begin{theorem}[Projective bundle formula] \label{thm:proj.bundle}
\begin{enumerate}
	\item \label{thm:proj.bundle-1}
	(Duality) For any $\alpha \in \CH(X)$,
	\begin{align*}
	\pi_* \, \pi_i^* (\alpha) =  \pi_* (c_1(\sO(1))^i \cap \pi^*(\alpha)) = 
	 	\begin{cases} 0, & i< r-1, \\ 
		\alpha, & i=r-1 . 
		\end{cases}
	\end{align*}
	\item \label{thm:proj.bundle-2}
	For any $k \in \NN$, there is an isomorphism of Chow groups:
	\begin{align*} 
	  \bigoplus_{i=0}^{r-1} \pi_i^* \colon   \bigoplus_{i=0}^{r-1} \CH_{k-(r-1)+i}(X)  \xrightarrow{\sim} \CH_k(\PP(\sE)).
		\end{align*}
	\item \label{thm:proj.bundle-3}
	The projection to the $i$-th summand of the above isomorphism is given by
		\begin{align} \label{eqn:lem:proj}
		\pi_{i\,*} (\blank)= \sum_{j=0}^{r-1-i} (-1)^jc_{j}(\sE) \cap \pi_* (\zeta^{r-1-i-j} \cdot (\blank) ), \quad \text{for} \quad i = 0, 1, \ldots , r-1.
		\end{align}
		Therefore  for any $i,j \in [0,r-1]$, the following holds:
	$$\pi_{i\,*} \, \pi_i^* = \Id_{\CH(X)}, \qquad \pi_{i\,*} \pi_j^* = 0, i\ne j, \quad \text{and} \quad \Id_{\CH(\PP(\sE))} = \sum_{i=0}^{r-1}  \pi_i^* \, \pi_{i\,*}.$$
		\end{enumerate}
\end{theorem}

\begin{proof} \eqref{thm:proj.bundle-1} follows from \cite[Proposition 3.1(a)]{Ful}, and \eqref{thm:proj.bundle-2} follows from \cite[Theorem 3.3]{Ful}, which could also be viewed as a special case of \cite[Proposition 14.6.5]{Ful}. For \eqref{thm:proj.bundle-3}, to agree with Fulton's notation, let $E = \sE^\vee$ be the dual vector bundle, so $\PP_{\rm sub}(E) = \PP(\sE)$. From \eqref{thm:proj.bundle-2}, for any $\beta \in \CH_k(\PP(\sE))$, there exists unique $\alpha_i \in \CH_{k-(r-1)+i}(X)$, $i \in [0,r-1]$ such that 
	$$\beta =  \sum_{i=0}^{r-1} \zeta^i \cdot \pi^* \alpha_i.$$
It follows from the definition of Segre classes $s_i(E) \cap \alpha : = \pi_*(\zeta^{i+r-1} \cdot \pi^* \alpha)$ that:
	$$
	 \pi_* (\zeta^{j} \cdot \beta)  =  \sum_{i=0}^{j}  s_{i}(E) \cap \alpha_{r-1-j+i}, \quad \text{for} \quad  j=0,1,\ldots, r-1.
	$$
Then the desired results follow from solving $\alpha_{i}$'s from the above equations by  using $1 = c(E)s(E) = (1+c_1(E)+c_2(E) + \ldots)(1+ s_1(E)+s_2(E) + \ldots)$.
\end{proof}

Notice that our maps $\pi_{i\,*}$ (resp. projectors $\pi_i^* \, \pi_{i\,*}$) are nothing but the explicit expressions of the correspondences $g_i$ (resp. orthogonal projectors $p_{r-i}$) that are inductively defined in \cite[\S 7, p 457, Definition]{Manin} (resp. \cite[\S 7, p 456, Proposition]{Manin}). By using these maps, Manin \cite[\S 7, p 457]{Manin} establishes an isomorphism of Chow motives:
	\begin{align*}  
	& \bigoplus_{j=0}^{r-1} h^{r-1-j} \circ \pi^*  \colon  \bigoplus_{j=0}^{r-1} \foh(X) (j)  \xrightarrow{\sim}  \foh(\PP(\sE)).
	\end{align*}

\begin{remark} The projector $\pi_{i\,*}$ can be expressed via the universal quotient bundle as: 
	$$\pi_{i*} = \pi_* (c_{r-1-i}(\shT_{\PP(\sE)/X}(-1)) \cap (\blank)) \colon \CH(\PP(\sE)) \to \CH(X).$$
This duality is explained for more general Grassmannian bundles in \cite{J20}.
\end{remark}

\begin{remark}\label{rmk:basis} (Change of basis) For any identification $\PP(\sE) \simeq \PP(\sE \otimes \sL)$, where $\sL \in \Pic X$, if we denote $\zeta' = c_1(\sO_{\PP(\sE \otimes \sL)}(1)) = \zeta + \pi^*c_1(\sL)$, and $\pi'_{i\,*}$ the projectors with respect to $\zeta'^i \cdot \pi^*(\blank)$. Then the two basis $\{\zeta^i\}_{0 \le i \le r-1}$ and $\{\zeta'^i\}_{0 \le i \le r-1}$ differers by an invertible upper triangular change of basis. In particular, for any $0 \le k \le r-1$, the following holds:
	$$\Span \{ \zeta^i \mid 0 \le i \le k \} = \Span \{ \zeta'^i \mid 0 \le i \le k \},$$
where for any subset $\shS \subset \CH^*(\PP(\sE))$, its span is defined by 
	$$\Span \shS : = \big \{ \sum_i  \alpha_i \cap \pi^* \beta_i  \mid \alpha_i \in S, \beta_i \in \CH(X) \big\}.$$
Similarly, for any $0 \le k \le r-1$, $\pi_{k\,*}'$ can be expressed as $\CH(X)$-linear combination of $\pi_{k\,*}, \pi_{k+1\,*}, \ldots, \pi_{r-1\,*}$, and vice versa. 
\end{remark}

\begin{lemma}[See {\cite[Lemma 5.3]{Rie}}] \label{lem:chern} The following equality holds:
	$$c_k(\Omega_{\PP(\sE)/X}(1)) = \sum_{i=0}^k (-1)^i  \zeta^i \cdot \pi^*c_{k-i}(\sE) = (-1)^k \sum_{i=0}^k   \zeta^i \cdot \pi^*c_{k-i}(\sE^\vee).$$
\end{lemma}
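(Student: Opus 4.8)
The plan is to reduce everything to the relative Euler sequence on the projective bundle together with the Whitney sum formula for Chern classes (viewed as operators on Chow groups in the sense of \cite{Ful}). With the Grothendieck convention $\PP(\sE) = \Proj_X \Sym^\bullet \sE$ in force, there is a tautological surjection $\pi^* \sE \twoheadrightarrow \sO_{\PP(\sE)}(1)$, and its kernel is $\Omega_{\PP(\sE)/X} \otimes \sO(1)$; that is, one has the short exact sequence
\begin{equation*}
0 \longrightarrow \Omega_{\PP(\sE)/X} \otimes \sO(1) \longrightarrow \pi^* \sE \longrightarrow \sO(1) \longrightarrow 0 .
\end{equation*}
(Twisting by $\sO(-1)$ recovers the familiar shape $0 \to \Omega_{\PP(\sE)/X} \to \pi^*\sE \otimes \sO(-1) \to \sO \to 0$; over a point with $\sE = \sO^{\oplus r}$ this is the classical Euler sequence of $\PP^{r-1}$.) The first thing to check is exactly this convention bookkeeping: that it is $\sE$, not $\sE^\vee$, which occupies the middle term, given the paper's choices $\PP(\sE) = \Proj \Sym^\bullet \sE$ and $\PP_{\rm sub}(V) = \PP(V^\vee)$.

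Granting the Euler sequence, the Whitney formula gives an identity of total Chern classes
\begin{equation*}
c\big(\Omega_{\PP(\sE)/X} \otimes \sO(1)\big) \cdot c(\sO(1)) = \pi^* c(\sE),
\end{equation*}
that is, $c(\Omega_{\PP(\sE)/X} \otimes \sO(1)) \cdot (1 + \zeta) = \pi^* c(\sE)$ with $\zeta = c_1(\sO(1))$. Formally inverting $1+\zeta$ — the computation in each fixed degree $k$ involves only $\zeta^0, \ldots, \zeta^k$, so no nilpotence assumption is needed — yields
\begin{equation*}
c\big(\Omega_{\PP(\sE)/X} \otimes \sO(1)\big) = \pi^* c(\sE) \cdot \sum_{j \ge 0} (-1)^j \zeta^j,
\end{equation*}
and extracting the degree-$k$ component gives $c_k(\Omega_{\PP(\sE)/X}\otimes \sO(1)) = \sum_{i=0}^{k} (-1)^i\, \zeta^i \cdot \pi^* c_{k-i}(\sE)$, which is the first claimed equality.

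For the second equality I would substitute $c_{k-i}(\sE) = (-1)^{k-i} c_{k-i}(\sE^\vee)$ (the standard relation $c_j(\sF^\vee) = (-1)^j c_j(\sF)$) into the formula just obtained, so that each summand acquires a sign $(-1)^i \cdot (-1)^{k-i} = (-1)^k$, which then factors out of the sum. The only genuine subtlety in the whole argument is the orientation/convention issue flagged in the first paragraph; once the correct Euler sequence is pinned down, the remainder is a one-line formal manipulation, and the statement is in any case recorded in \cite[Lem.~5.3]{Rie}.
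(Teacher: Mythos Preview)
Your proof is correct. The paper does not supply its own proof of this lemma; it is stated with a citation to \cite[Lem.~5.3]{Rie} and used as a black box, so there is nothing to compare against beyond noting that your argument is the standard one. Your convention check is also consistent with the paper: the form $0 \to \Omega_{\PP(\sE)}(1) \to \pi^*\sE \to \sO(1) \to 0$ of the Euler sequence is exactly what the paper invokes later (e.g.\ equation~(\ref{eqn:Euler}) in the proof of Step~(a) for Cayley's trick), so the bookkeeping is right and the Whitney/inversion computation goes through as you wrote it.
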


\subsection{Blowups}
Let $Z \subset X$ be a codimension $r \ge 2$ locally complete intersection subscheme. Denote $\pi: \widetilde{X} \to X$ the blowup of $X$ along $Z$, with exceptional divisor $E \subset \widetilde{X} $. Then $E = \PP(\sN_{Z/X}^\vee)$ is a projective bundle over $Z$. We have a Cartesian diagram:
	\begin{equation*}
	\begin{tikzcd}[row sep= 2.6 em, column sep = 2.6 em]
	E \ar{d}[swap]{p} \ar[hook]{r}{j} & \widetilde{X} \ar{d}{\pi} \\
	Z \ar[hook]{r}{i}         & X 
	\end{tikzcd}	
	\end{equation*}
The {\em excess bundle} $\sV$ for the diagram is defined by the short exact sequence:		\begin{align*}
	0 \to \sN_{E/\widetilde{X}} \to p^*\sN_{Z/X} \to \sV \to 0.
	\end{align*}
From the excess bundle formula \cite[Theorem 6.3]{Ful}, one obtains the {\em key formula} for blowup:
	\begin{align} \label{eqn:k.f.blowup}
	\pi^* \,  i_* (\blank) = j_* (c_{r-1}(\sV) \cap p^*(\blank)) \colon \qquad \CH_{k}(Z) \to \CH_k(\widetilde{X}).
	\end{align}
The following is summarised from \cite[Proposition 6.7]{Ful}:

\begin{theorem}[Blowups] \label{thm:blowup}
\begin{enumerate}
	\item  \label{thm:blowup-1}
	The following holds: 
		$$\pi_* \, \pi^* = \Id_{\CH(X)} \quad \text{and} \quad p_*(c_{r-1}(\sV) \cap p^*(\blank)) = \Id_{\CH(Z)}.$$
	\item  \label{thm:blowup-2}
	For any $k \ge 0$, there exists a split short exact sequence:
	\begin{align*}
		0 \to \CH_{k}(Z) \xrightarrow{(c_{r-1}(\sV) \cap p^*(\blank), \, -i_*)} \CH_{k}(E) \oplus \CH_{k}(X) \xrightarrow{(\varepsilon, \alpha) \mapsto j_*\varepsilon + \pi^* \alpha} \CH_k(\widetilde{X}) \to 0,
	\end{align*}
where a left inverse of the first map is given by $(\varepsilon, \alpha) \mapsto p_* \, \varepsilon$.
	\item  \label{thm:blowup-3}
	The above exact sequence induces an isomorphism of Chow groups
	\begin{align*} 
		 \CH_{k}(X) \oplus \bigoplus_{i=0}^{r-2} \CH_{k-(r-1)+i}(Z)  \xrightarrow{\sim} \CH_{k}(\widetilde{X}), 
	\end{align*}
given by $(\alpha, \oplus_{i=0}^{r-2}  \beta_i) \mapsto \pi^* \, \alpha + j_*(\sum_{i=0}^{r-2} \zeta^i \cdot p^*\beta_i)$, where $\zeta = c_1(\sO_{\PP(\sN_{Z/X}^\vee)}(1))$.
\end{enumerate}
\end{theorem}
Note that the well-known formula of \eqref{thm:blowup-3} follows from \eqref{thm:blowup-2} by the identification
	\begin{align*}
	\CH_{k}(\widetilde{X}) & = \pi^* \CH_{k}(X) \oplus j_* (\CH_k(E)_{p_*=0})  \\
	 &=  \pi^* \CH_{k}(X) \oplus  \bigoplus_{i=0}^{r-2} j_* (\zeta^ i \cdot p^*\CH_{k-(r-1)+i}(Z)),
	\end{align*}
where $\CH_k(E)_{p_*=0}$ denote the subgroup $\{\gamma \in \CH_k(E) \mid p_* \, \gamma = 0\}$ of $\CH_k(E)$. A similar and more detailed argument is given later in the case of standard flips, see Theorem \ref{thm:flip}. There are similar results on Chow motives by Manin \cite{Manin}; see also Corollary \ref{cor:flip} below.

\section{Cayley's trick and standard flips}
The projectivization can be viewed as a combination of the situation of Cayley's trick and flips. In this chapter we study the Chow theory of the latter two cases.

\subsection{Cayley's trick and Chow group} \label{sec:Cayley}
Cayley's trick is a method to relate the geometry of the zero scheme of a regular section of a vector bundle to the geometry of a hypersurface; see the discussions of \cite[\S 2.3]{JL18}. The relationships for their derived categories were established by Orlov \cite[Proposition 2.10]{Orlov06}, we now focus on their Chow groups.
 
Let $\sE$ be a locally free sheaf of rank $r \ge 2$ on a scheme
$X$, and $s \in H^0(X,\sE)$ a regular section, and denote $Z:=Z(s)$ the zero locus of the section $s$. Denote the projectivization by $q\colon \PP(\sE) = \Proj \Sym^\bullet \sE \to X$. Then under the canonical identification 
	$$H^0(X,\sE) = H^0(\PP(\sE), \sO_{\PP(\sE)}(1))$$
 the section $s$ corresponds canonically to a section $f_s$ of $\sO_{\PP(\sE)}(1)$ on $\PP(\sE)$. Denote the divisor defined by $f_s$ by:
 	$$\shH_s : = Z(f_s) \subset \PP(\sE).$$
 Then $\shH_s = \PP(\sG) =  \Proj \Sym^\bullet \sG$, where $\sG = {\rm coker}(\sO_X \xrightarrow{~s~} \sE)$. Thus $\shH_s$ is a $\PP^{r-2}$-bundle over $X \backslash Z$, and a $\PP^{r-1}$-bundle over $Z$. It follows that $\shH_s|_{Z}$ coincides with $\PP_Z(\sN_i)$, the projectivization of the normal bundle of inclusion $i \colon Z \hookrightarrow X$. The situation is illustrated in the following commutative diagram, with maps as labeled:
\begin{equation}\label{diagram:Cayley}
	\begin{tikzcd}[row sep= 3 em, column sep = 3.6 em]
	\PP(\sN_i) \ar{d}[swap]{p} \ar[hook]{r}{j} & \shH_s \ar{d}{\pi} \ar[hook]{r}{\iota} & \PP(\sE) \ar{ld}[near start]{q} 
	\\
	Z \ar[hook]{r}{i}         & X  
	\end{tikzcd}	
\end{equation}
Since $\sN_i = \sE|_{Z}$ and $\sO_{\PP(\sE)}(1)|_{\PP(\sN_i)} = \sO_{\PP(\sN_i)}(1)$, by abuse of notations, we use $\zeta \cdot (\blank)$ to denote both $c_1(\sO_{\PP(\sE)}(1))\cap(\blank)$ and $c_1(\sO_{\PP(\sN_i)}(1))\cap(\blank)$. The main result of this section is:

\begin{theorem}[Cayley's trick for Chow groups] \label{thm:Cayley}
There exists a split short exact sequence:
	\begin{align}\label{eqn:thm:Cayley:ses}
		0 \to \bigoplus_{i=0}^{r-2} \CH_{k-(r-2)+i}(Z) \xrightarrow{f}  \bigoplus_{i=0}^{r-2} \CH_{k-(r-2)+i}(X) \oplus \CH_{k}(\PP(\sN_i)) \xrightarrow{g} \CH_k(\shH_s) \to 0,
	\end{align}
where the maps $f$ and $g$ are given by 
	\begin{align*}
	&f  \colon \oplus_{i=0}^{r-2}  \gamma_i \mapsto (- \oplus_{i=0}^{r-2} i_*  \gamma_i , ~\sum_{i=0}^{r-2} \zeta^{i+1} \cdot p^* \gamma_i), \\
	&g \colon (\oplus_{i=0}^{r-2}  \alpha_i, ~\varepsilon) \mapsto \sum_{i=0}^{r-2} \zeta^i \cdot \pi^*\alpha_i + j_* \varepsilon
	\end{align*}
(where $p_{i\,*}$ is defined below similar to (\ref{eqn:lem:proj})). A left inverse of $f$ is given by 
	$(\oplus_{i=0}^{r-2}  \alpha_i, ~\varepsilon) \mapsto \oplus_{i=0}^{r-2} p_{i+1\,*} \varepsilon$.
 Furthermore, the above sequence induces an isomorphism
	\begin{align} \label{eqn:cayley.chow}
		\bigoplus_{i=0}^{r-2} \CH_{k-(r-2)+i}(X) \oplus \CH_{k-(r-1)}(Z) \xrightarrow{\sim} \CH_{k}(\shH_s), 
	\end{align}
given by $(\oplus_{i=0}^{r-2}  \alpha_i, ~\gamma) \mapsto \sum_{i=0}^{r-2} \zeta^i \cdot \pi^*\alpha_i + j_* p^* \gamma$, and in this decomposition the projection map to the first $(r-1)$-summands $\CH_{k}(\shH_s) \to \CH_{k-(r-2)+i}(X)$, $i=0,1,\ldots, r-2$, is given by $\beta \mapsto \pi_{i\,*} \,\beta$, where $\pi_{i\,*}$ is defined below by (\ref{eqn:pi_i}), and the projection to the last summand $\CH_{k}(\shH_s) \to \CH_{k-(r-1)}(Z)$ is given by $\beta \mapsto (-1)^{r-1} p_* \, j^*\beta$.
\end{theorem}

For simplicity, we introduce the following notations. For the projective bundles $q: \PP(\sE) \to X$ and $p: \PP(\sN_i) \to Z$, similar to \eqref{eqn:lem:proj}, we denote the projections to the $i$-th factors by
	$$q_{i\,*} \colon \CH_k(\PP(\sE)) \to \CH_{k-(r-1)+i}(X), \quad p_{i\,*} \colon \CH_k(\PP(\sN_i)) \to \CH_{k-(r-1)+i}(Z),$$
which are explicitly given as follows: for any $i=0,1,\ldots, r-1$,
		\begin{align*}
		q_{i\,*} (\blank)& = \sum_{j=0}^{r-1-i} (-1)^jc_{j}(\sE) \cap q_* (\zeta^{r-1-i-j} \cdot (\blank) ), \\
		p_{i\,*} (\blank)& = \sum_{j=0}^{r-1-i} (-1)^jc_{j}(\sN_i) \cap p_* (\zeta^{r-1-i-j} \cdot (\blank)).
		\end{align*}
Furthermore, for any $i \in [0,r-1]$, $\alpha \in \CH(X)$, $\gamma \in \CH(Z)$, we denote
	$$q_i^* \alpha: = \zeta^i \cdot q^*\alpha \quad \text{and} \quad p_i^* \gamma := \zeta^i \cdot p^* \gamma.$$
Then the projective bundle formula Theorem \ref{thm:proj.bundle} states: (1) for all $i,j \in [0,r-1]$,
	$$q_{i\, *} \, q_j^* = \delta_{i,j} \Id_{\CH(X)}, \quad \text{and} \quad p_{i\, *} \, p_j^* = \delta_{i,j} \Id_{\CH(Z)},$$
and (2) for all $\beta \in \CH(\PP(\sE))$ and $\varepsilon \in \CH(\PP(\sN_i))$, the following relations hold:
	$$\beta = \sum_{i=0}^{r-1} q_i^*\, q_{i\,*} \,\beta \quad \text{and} \quad \varepsilon = \sum_{i=0}^{r-1} p^*_i \,p_{i\,*} \, \varepsilon.$$
	
Now for all $\alpha \in \CH_{\ell}(X)$ and $\beta \in \CH_k(\shH_s)$, and all $i \in [0,r-2]$, we define
	$$\pi_i^*\alpha : = \iota^* q_i^* \alpha \in \CH_{\ell+(r-2)-i}(\shH_s), \qquad \pi_{i\,*} \beta : = q_{i+1\,*}  \, \iota_* \, \beta \in \CH_{k-(r-2)+i}(X).$$
Then it follows from the projection formula that $\pi_i^* \alpha = \zeta^i \cdot \pi^* \alpha$, and $\pi_{r-2\,*} = \pi_*$ and 
	\begin{align} \label{eqn:pi_i}
		\pi_{i\,*} (\blank)= \sum_{j=0}^{r-2-i} (-1)^jc_{j}(\sE) \cap \pi_* (\zeta^{r-2-i-j} \cdot (\blank) ), \quad i=0,\ldots, r-2.
		\end{align}	
Notice that $c_{i}(\sE) =c_{i}(\sG)$ for $i \in [0,r-2]$,  where $\sG = {\rm coker}(\sO_X \xrightarrow{\,s\,} \sE)$; the relationships between $\pi_{i\,*}$'s and $\pi_i^*$'s are similar to the case of a $\PP^{r-2}$-bundle.

We prove the theorem by the same steps as the blowup case in \cite[\S 6.7]{Ful}:
\begin{proposition}[cf. {\cite[Proposition 6.7]{Ful}}] \label{prop:caley:key-lemma}
\begin{enumerate}
	\item [(a)] (Key formula). For all $\alpha \in \CH_k(Z)$,
	$$\pi^* \, i_* \, \alpha = j_* (\zeta \cdot p^* \alpha) \in \CH_{k+r-2}(\shH_s).$$
Then by the projection formula $\pi_i^* \, i_* \, \alpha = j_* (\zeta \cdot p_i^* \alpha)$ for all $i\in [0,r-2]$.	
	\item [(b)]  For any $\alpha \in \CH_k(X)$, $i,j \in [0,r-2]$, $\pi_{i\,*} \, \pi_i^* \, \alpha = \alpha$, $\pi_{i\,*} \, \pi_j^* \, \alpha =0$ if $i \ne j$.
	\item [(c)] For $\varepsilon \in \CH(\PP(\sN_i))$, if $j^*\,j_*\,\varepsilon  = 0$ and $p_{1\,*} \varepsilon  = \ldots = p_{r-1\,*} \varepsilon =0$, then $\varepsilon = 0$
	\item [(d)]  
		\begin{enumerate}
		\item[(i)] For any $\beta \in \CH_k(\shH_s)$ there is an $\varepsilon \in \CH_k(\PP(\sN_i))$ such that 
			$$\beta = \sum_{i=0}^{r-2} \pi_i^*\, \pi_{i\,*} \,\beta + j_* \,\varepsilon.$$
		 \item[(ii)] For any $\beta \in \CH_k(\shH_s)$, if $\pi_{i \, *} \beta = 0$, $i \in [0,r-2]$ and $j^* \beta = 0$, then $\beta=0$.
		 \end{enumerate}
\end{enumerate}
\end{proposition}

\begin{proof} 
(a). In fact, from \cite[Remark. 2.5]{JL18} the Euler sequence for $\PP(\sN_i)$ is equivalent to:
	\begin{align}\label{eqn:Euler}
	0 \to \sN_j \to p^* \sN_i \to \sO_{\PP(\sN_i)}(1) \to 0,
	\end{align}
where $\sN_i = \sE|_Z$ and $\sN_j \simeq \Omega_{\PP(\sE)/X}(1)|_{\PP(\sN_i)}$. 
Therefore the excess bundle for the diagram \ref{diagram:Cayley} is given by $ \sO_{\PP(\sN_i)}(1)$. Now from 
\cite[Theorem 6.3]{Ful} and  \cite[Proposition 6.2(1), Proposition 6.6]{Ful}, one has:
	$$\pi^* \,  i_* (\blank) = j_* \, \pi^!_{\PP(\sN_i)} (\blank) = j_* (c_1(\sO_{\PP(\sN_i)}(1)) \cap p^*(\blank)).$$

(b). Since $\iota\colon \shH_s \hookrightarrow \PP(\sE)$ is a divisor of $\sO_{\PP(\sE)}(1)$, $\iota^* \, \iota_* (\blank) = \zeta \cdot (\blank)$, and
	$$\pi_{i\,*} \, \pi_j^* \, \alpha  =  q_{i+1\,*}  \,\iota_* ( \iota^* (q_j^* \alpha) ) 
	=  q_{i+1\,*} (\zeta \cdot \zeta^j \cdot q^* \alpha) = q_{i+1\,*} q_{j+1}^* \,\alpha =\delta_{i,j} \alpha.$$

(c). Since $j^*\,j_*\,\varepsilon = c_{r-1}(\sN_j) \cap \varepsilon$. From the Euler sequence (\ref{eqn:Euler}) and $\sN_i = \sE|_Z$:
	$$ c_{r-1}(\sN_j) = \sum_{i=0}^{r-1} (-1)^i \zeta^i p^* c_{r-1-i}(\sE) = (-1)^{r-1} \zeta^{r-1} + \text{(lower order terms of $\zeta^{i}$)}.$$
Therefore $j^*\,j_*\,\varepsilon  = c_{r-1}(\sN_j) \cap \varepsilon  = 0$ and $p_{1\,*} \varepsilon  = \ldots = p_{r-1\,*} \varepsilon =0$ implies $p_{0 \,*}  \, \varepsilon= p_*(\zeta^{r-1} \cdot \varepsilon) + p_* (\text{(lower order terms of $\zeta^{i}$)} \cap \varepsilon) = \pm p_*( c_{r-1}(\sN_j) \cap \varepsilon ) +  p_* (\text{(lower order terms of $\zeta^{i}$)} \cap \varepsilon) = 0$. Hence $\varepsilon=  \sum_{i=0}^{r-1} p_i^* \, p_{i\,*} \varepsilon = 0$. 

(d)(i). Over the open subscheme $U = X \backslash Z$, $\shH_s|_{\pi^{-1}(U)} = \PP(\sG|_U)$ is a projective bundle with fiber $\PP^{r-2}$. In fact, over $U$ there is an exact sequence of vector bundles $0 \to \sO_U \to \sE|_U \to \sG|_U \to 0$. Then linear subbundle $\PP( \sG|_U) \subset  \PP(\sE|_U)$ is a divisor representing the class $\zeta = c_1 (\sO(1))$. For any $\beta \in \CH(\PP( \sG|_U))$, by Theorem \ref{thm:proj.bundle} applied to $\PP(\sG_U)$, there exists a unique $\alpha_i \in \CH(U)$, such that $\beta = \sum_{i=0}^{r-2} (\iota^*\zeta)^i \cdot \pi^* \alpha_i$. Therefore the following holds:
	$$\iota_* \, \beta =  \sum_{i=0}^{r-2} \iota_* ((\iota^*(\zeta)^i \cdot \iota^*(q^* \, \alpha)) =  \sum_{i=0}^{r-2} \zeta^{i+1} \cdot q^*\, \alpha_i,$$
where the last equality follows from the projection formula and $\iota_* \iota^* (\blank) = \zeta \cdot (\blank)$. From the uniqueness statement of Theorem \ref{thm:proj.bundle} applied to $\PP(\sE)$, we know that
	$$\alpha_i = q_{i+1\,*} \, \iota_* \, \beta  = \pi_{i \, *} \beta.$$ 
Therefore over $U$, the following holds:
	$\beta =  \sum_{i=0}^{r-2} \pi_i^*\, \pi_{i\,*} \,\beta.$
Now for any $\beta \in \CH_k(\shH_s)$, $(\beta - \sum_{i=0}^{r-2} \pi_i^*\, \pi_{i\,*} \,\beta)|_U = 0$. From the exact sequence $\CH(\PP(\sN_i)) \to \CH(\shH_s) \to \CH(\shH_{s}|_U) \to 0$, there exits an $\varepsilon \in \CH(\PP(\sN_i))$ such that $\beta - \sum_{i=0}^{r-2} \pi_i^*\, \pi_{i\,*} \,\beta = j_* \varepsilon$.

(d)(ii). From Step $(d)(i)$ we know that $\beta = j_* \, \varepsilon$, for $\varepsilon \in \CH_k(\PP(\sN_i))$. Since the ambient square of (\ref{diagram:Cayley}) is flat, by the flat base-change formula we have:
	$$i_* \, p_{i+1 \,*} = q_{i+1 \, *} \, (\iota \circ j)_* = \pi_{i \, *} \, j_*, \quad \text{for} \quad i = 0,1,\ldots, r-2.$$
Therefore $i_* \, (p_{i+1 \,*} \varepsilon) =  \pi_{i \,} * \beta = 0$ for $i \in [0,r-2]$. Notice since $\varepsilon = \sum_{i=0}^{r-1} p_i^* \, p_{i \,*} \varepsilon$, one has
	\begin{align*} j_* (p_0^*\, p_{0\,*} \varepsilon) = j_*( \varepsilon) - j_* (\zeta \cdot   \sum_{i=0}^{r-2} p_{i}^* \, p_{i+1 \,*} \varepsilon ) 
	 = j_*( \varepsilon) - \sum_{i=0}^{r-2} \pi_{i}^* i_* (p_{i+1 \,*} \varepsilon) = j_*( \varepsilon).
	\end{align*}
Here the second equality follows from the {\em key formula} of Step $(a)$. Now $j^* j_* (p_0^* \,  p_{0\,*} \varepsilon) = j^* j_*  \varepsilon =0$. By Step $(c)$, $p_0^* \,  p_{0\,*} \varepsilon =0$, hence $\beta = j_* \varepsilon =0$.
\end{proof}

Theorem \ref{thm:Cayley} follows from the above Proposition \ref{prop:caley:key-lemma} as follows:
\begin{proof}[Proof of Theorem \ref{thm:Cayley}]
The fact $gf =0$ follows from Step $(a)$. The surjectivity of $g$ is Step $(d)(i)$. By Step $(b)$, a left inverse of $f$ is given by 
	$h \colon (\oplus_{i=0}^{r-2}  \alpha_i, ~\varepsilon) \mapsto \oplus_{i=0}^{r-2} p_{i+1\,*} \varepsilon.$
In fact, $hf$ is
	$$\oplus_{i=0}^{r-2}  \gamma_i \mapsto \oplus_{i=0}^{r-2} p_{i+1\,*} (\sum_{j=0}^{r-2} \zeta^{j+1} \cdot p^* \gamma_j) =  \oplus_{i=0}^{r-2} (p_{i+1\,*} \sum_{j=0}^{r-2} p_{j+1}^* \, \gamma_i)  = \oplus_{i=0}^{r-2} \gamma_i.$$
To show the exactness of \eqref{eqn:thm:Cayley:ses}, suppose for $\alpha_i \in \CH(X)$ and $\varepsilon \in \CH(\PP(\sN_i))$,
	$\sum_{i=0}^{r-2} \pi_i^*\, \alpha_i  + j_* \varepsilon = 0.$
Then similar to Step $(d)(ii)$, from Step $(b)$, for all $i \in [0,r-2]$,
	$$\alpha_i = - \pi_{i\,*} ( j_* \varepsilon) = - i_{*} \, p_{i+1\,*} \varepsilon \in \CH(X).$$
Now consider 
	$
	\varepsilon' = \varepsilon - \sum_{i=0}^{r-2} p_{i+1}^*\, p_{i+1\,*} \varepsilon,
	$
then similar to the proof of Step $(d)(ii)$, we have
	\begin{align*} j_* \varepsilon' = j_*( \varepsilon) - j_* (\zeta \cdot   \sum_{i=0}^{r-2} p_{i}^* \, p_{i+1 \,*} \varepsilon ) 
	 = j_*( \varepsilon) - \sum_{i=0}^{r-2} \pi_{i}^* i_* (p_{i+1 \,*} \varepsilon) = j_*( \varepsilon) + \sum_{i=0}^{r-2} \pi_{i}^* \alpha_i =0,
	\end{align*}
and $p_{1\,*} \varepsilon'  = \ldots = p_{r-1\,*} \varepsilon' =0$ (since $\varepsilon'  =p_0^* \, p_{0\,*} \varepsilon$). Therefore by Step $(c)$, $ \varepsilon' =0$. Hence 
$(\oplus_i  \alpha_i, \varepsilon) = ( -\oplus_i  i_{*} \, \gamma_i, ~  \sum_{i=0}^{r-2} p_{i+1}^*\,\gamma_i)$ for $\gamma_i= p_{i+1\,*} \varepsilon$. Hence the sequence  \eqref{eqn:thm:Cayley:ses} is exact.

To prove the last statement, we show that for any $\beta \in \CH(\shH_s)$, there exists a unique $\varepsilon \in \CH(\PP(\sN_i))$, such that  $p_{1\,*} \varepsilon  = \ldots = p_{r-1\,*} \varepsilon =0$, and 
	$$\beta = \sum_{i=0}^{r-2} \pi_i^*\, \pi_{i\,*} \,\beta + j_* \,\varepsilon.$$
In fact, for any expression
	$\beta = \sum_{i=0}^{r-2} \pi_i^*\, \alpha_i + j_* \,\varepsilon,$
by replacing $\varepsilon$ by $\varepsilon - \sum_{i=0}^{r-2} p_{i+1}^*\, p_{i+1\,*} \varepsilon$ and $\alpha_i$ by $\alpha_i+ i_* (p_{i+1 \,*} \varepsilon)$, we may  assume $p_{1\,*} \varepsilon  = \ldots = p_{r-1\,*} \varepsilon =0$. Hence by the projective bundle formula, $\varepsilon = p^* \gamma$ for a unique $\gamma \in \CH(Z)$. Now by the flat base-change formula,
	$$\pi_{i\,*} (j_* \, p^* \,\gamma) = q_{i+1\,*} (\iota \circ j)_* \,p^* \,\gamma = q_{i+1\,*}  q^* (i_* \, \gamma) = 0, \qquad i \in [0,r-2].$$
Therefore $\pi_{i\,*} \beta = \pi_{i\,*} ( \sum_{i=0}^{r-2} \pi_i^*\, \alpha_i + j_* \, \, p^* \,\gamma) = \alpha_i$ for all $i=0,1,\ldots, r-2$. Hence we have established the identification:
	\begin{align*}
	\CH_{k}(\shH_s) & =\bigoplus_{i=0}^{r-2} \pi_i^* \CH_{k-(r-2)+i}(X) \oplus j_* (\CH_{k}(\PP(\sN_i))_{p_{1\,*} = \ldots = p_{r-1\,*} =0})
	 \\
	 &= \bigoplus_{i=0}^{r-2} \pi_i^* \CH_{k-(r-2)+i}(X)  \oplus  j_*\, p^*\,\CH_{k-(r-1)}(Z)
	\end{align*}
(where $\CH_{k}(\PP(\sN_i))_{p_{1*} = \ldots =p_{r-1\,*} =0}$ denotes the subgroup $\{\gamma \in \CH_{k}(\PP(\sN_i)) \mid p_{1\,*} \gamma = \ldots =p_{r-1\,*} \gamma =0 \}$ of $\CH_{k}(\PP(\sN_i) )$.) Moreover, the projection maps to first $(r-1)$-summands are respectively given by $\beta \mapsto  \alpha_i =\pi_{i\,*} \beta$, for $i = 0,1\ldots, r-2$. For the formula of the projection to the last summand, it suffices to notice that $p_* \, j^*\, \pi_i^* (\blank)= p_* (p_i^*(i^*(\blank))) = 0$ for $i \in [0, r-2]$ and that $p_* \, j^*\,j_*\,p^*(\blank) = p_* ( c_{r-1} (\Omega_{\PP(\sE)}(1)) \cap p^*(\blank)) =  (-1)^{r-1} \Id$.
\end{proof}

\begin{remark} \label{rmk:Cayley_trick_id}
If we denote $\Gamma = \PP(\sN_i) = \shH_s \times_X Z$, and $\Gamma_* \colon \CH(\shH_s) \to \CH(Z)$ (resp. $\Gamma^* \colon  \CH(Z) \to \CH(\shH_s) $) the map induced by the correspondence $[
\Gamma] \in \CH(\shH_s \times Z)$ (resp. by the transpose $[\Gamma]^{t} \in \CH( Z \times \shH_s)$ of $[\Gamma]$), then 
	$$\Gamma_* = p_* \circ j^* \quad \text{and} \quad\Gamma^* = j_* \circ p^*.$$
In the above proof, we have actually shown that the relations
	$$\Gamma_* \, \Gamma^* = (-1)^r \Id_{\CH(Z)}, \quad \pi_{i\,*} \pi_j^{*} = \delta_{i,j} \Id_{\CH(X)}, \quad \Gamma_* \pi_i^{*} =   \pi_{i\,*} \Gamma^* = 0,$$
hold for any $i,j \in [0,r-2]$, and that the isomorphism (\ref{eqn:cayley.chow}) is given by:
	$$\Id_{\CH(\shH_s)} = \sum_{i=0}^{r-2} \pi_i^{*} \, \pi_{i\,*} + \Gamma^* \, \Gamma_*.$$
\end{remark}

\begin{corollary} \label{cor:Cayley} If $X$, $\shH_s$ and $Z$ are smooth and projective varieties over some ground field $\kk$, then there is an isomorphism of  Chow motives:
	\begin{align*} \label{eqn:cayley.chow.motive}
		\left(\bigoplus_{i=0}^{r-2} h^{r-2-i} \circ \pi^* \right) \oplus [\Gamma]^t \colon \left(\bigoplus_{i=0}^{r-2} \foh(X)(i) \right) \oplus \foh(Z)(r-1) \xrightarrow{\sim} \foh(\shH_s). 
	\end{align*}
\end{corollary}
\begin{proof} By Manin's identity principle, it suffices to notice that for any smooth $T$, the schemes $Z \times T \subset X \times T$ and $\shH_s \times T$ are also in the same situation of Cayley's trick Theorem \ref{cor:Cayley}. Hence the identities of Remark \ref{rmk:Cayley_trick_id} hold for the Chow motives.
\end{proof}

\begin{example} \label{example:fano}. Let $Y \subset \PP^n$ be any complete intersection subvariety over a field $\kk$ of codimension $c \ge 1$, say cut out by a regular section of the vector bundle $\bigoplus_{i=1}^c \sO_{\PP^n}(d_i)$. Following \cite{KKLL}, if we fix a positive integer $r \ge \max\{\sum d_i - n -c, 1-c\}$, then $Y \subset \PP^n \subset \PP^{n+r}=X$ is the zero subscheme of a regular section $s$ of the ample vector bundle
	$$\sE :=  \sO_{ \PP^{n+r}}(1)^{\oplus r} \oplus \bigoplus_{i=1}^c \sO_{ \PP^{n+r}}(d_i).$$
It is shown in \cite{KKLL} that $F_Y := \shH_s \subset \PP(\sE)$ is a Fano variety. Theorem \ref{thm:Cayley} implies:
	$$\CH_*(F_Y) = \CH_{*-r-c+1}(Y) \oplus \bigoplus_{i=0}^{r+c-2} \CH_{*-r-c+2+i}(\PP^{n+r}),$$
and similarly for Chow motives if we assume $Y$ is smooth. Hence the Chow group (resp. motive, rational Hodge structure if $\kk \subset \CC$ and $Y$ smooth) of every complete intersection $Y$ can be split embedded into that of a Fano variety $F_Y$, with complement given by copies of the Chow group (resp. motive, rational Hodge structure) of a projective space $\PP^{n+r}$.
\end{example}

\subsection{Standard flips} \label{sec:flips}
Let $(\Psi, \psi):(X,P) \to (\overline{X}, S)$ be a log-extremal contraction such that 
	\begin{enumerate}
		\item[(i)] $P = \PP_{S, \,\rm sub}(F)$ for a vector bundle $F$ of rank $n+1$ on $S$;
		\item[(ii)] Over every $s \in S$, $(\sN_{P/X})|_{P_s} \simeq \sO_{\PP^n}(-1)^{\oplus(m+1)}$ for some fixed integer $m$.
	\end{enumerate}
By (the same argument of) \cite[\S 1]{LLW}, there exists a vector bundle $F'$ of rank $m+1$ such that $\sN_{P/X} = \sO_{\PP_{\rm sub}(F)}(-1) \otimes \psi^* F'$. If we blow up $X$ along $P$, we get $\pi \colon \widetilde{X} \to X$ with exceptional divisor $E = \PP_{\rm sub}(\sN_{P/X} ) = \PP_{S, \, \rm sub}(F) \times_S \PP_{S, \, \rm sub}(F')$. Furthermore, one can blow down $E$ along fibres of $\PP_{S, \,\rm sub}(F)$ and get $\pi' \colon \widetilde{X} \to X'$ and $\pi'(E) = :P' \simeq \PP_{S, \, \rm sub}(F')$, with $\sN_{P'/X'} \simeq  \sO_{\PP_{\rm sub} (F')}(-1) \otimes \psi'^* F$, where $\psi' \colon P' \to S$ the natural projection. Hence we obtain another log-extremal contraction $(\Psi', \psi'):(X',P') \to (\overline{X}, S)$ which is birational to $(X,P)$. 

The above birational map $f: X \dashrightarrow X'$ is called a {\em standard (or ordinary) flip} of type $(n,m)$. Note that $X >_{K} X'$ (resp. $X \simeq_K X'$) if and only if $n>m$ (resp. $n=m$). 

The geometry is illustrated in the following diagram, with maps as labeled:
\begin{equation*}
\begin{tikzcd}[row sep= 1.2 em, column sep = 2 em]
	 & & E= P\times_S P' \ar[hook]{d}[swap]{j} \ar{lldd}[swap]{p} \ar{rrdd}{p'}& & \\
	 & & \widetilde{X} \ar{ld}[swap]{\pi} \ar{rd}{\pi'} & & \\
	P=\PP_{S, \,\rm sub}(F) \ar[hook]{r}{i} \ar{rrdd}[swap]{\psi}&X  \ar{rd}[swap]{\Psi}& &X'  \ar{ld}{\Psi'} &P' =\PP_{S, \,\rm sub}(F')\ar[hook']{l}[swap]{i'}   \ar{lldd}{\psi'} \\
	& & \overline{X} &&\\
	& & S \ar[hook]{u} & &
\end{tikzcd}
\end{equation*}

If $X >_{K} X'$ (resp. $X \simeq_K X'$), the expected relations of derived categories for the flip (resp. flop) $f\colon X \dashrightarrow X'$ are established by Bondal--Orlov \cite{BO}. In this section we establish the corresponding relations on Chow groups, which complements the results of \cite[\S 3]{LLW}.

From now on we assume $n \ge m$, i.e. $X \ge_K X'$. Denote $\Gamma$ the graph closure of $f$ in $X' \times X$, which is nothing but $\widetilde{X} = X \times_{\overline{X}} X'$. Denote by $\Gamma_*\colon  \CH_k(X) \to \CH_k(X')$  and $\Gamma^* \colon \CH_k(X') \to \CH_k(X)$ the maps induced by $[\Gamma] \in \CH_{\dim X}(X \times X')$. It is easy to see 
	\begin{align*}
	\Gamma_*(\blank) =   \pi'_* \, \pi^*(\blank)  \quad \text{and} \quad \Gamma^*(\blank)  =  \pi_* \, \pi'^*(\blank).	
	\end{align*}
 Denote by $\sV$ and $\sV'$ the respective excess bundles for the blow up $\pi \colon \widetilde{X} \to X$ and $\pi' \colon \widetilde{X} \to X'$, i.e. they are defined by the short exact sequences:
	\begin{align*}
	0 \to \sN_{E/\widetilde{X}} \to p^*\sN_{P/X} \to \sV \to 0, \qquad 0 \to \sN_{E/\widetilde{X}} \to p'^*\sN_{P'/X'} \to \sV' \to 0.
	\end{align*}
Denote $\Phi_* \colon \CH_k(P) \to \CH_k(P')$ respectively $\Phi^* \colon \CH_k(P') \to \CH_k(P)$ the maps given by correspondence $c_{m}(\sV) \in \CH^{m}(P \times_S P')$ respectively $c_{n}(\sV') \in \CH^{n}(P' \times_S P)$, i.e.
	\begin{align*}
	\Phi_*(\blank) =p'_*(c_{m}(\sV)\cap p^*(\blank)) \quad \text{and} \quad \Phi^*(\blank) = p_*(c_{n}(\sV')\cap p'^*(\blank)).
	\end{align*}
It follows from Euler sequence that $\sV = \sO_{P}(-1) \boxtimes \shT_{P'/S}(-1)$ and $\sV' = \shT_{P/S}(-1) \boxtimes \sO_{P'}(-1)$. 

\begin{theorem}[Standard flips]\label{thm:flip} Let $f: X \dashrightarrow X'$ be a standard flip as above, and assume $X'$ is non-singular and quasi-projective. Then 
\begin{enumerate}
	\item \label{thm:flip-1}
	The following holds: 
		$$\Gamma_* \, \Gamma^* = \Id_{\CH(X')} \quad \text{and} \quad \Phi_* \, \Phi^* = \Id_{\CH(P')}.$$
	\item \label{thm:flip-2}
	There exists a split short exact sequence:
	\begin{align*}
		0 \to \CH_{k}(P') \xrightarrow{(\Phi^*, -i'_*)} \CH_{k}(P) \oplus \CH_{k}(X') \xrightarrow{(\gamma, \alpha')\mapsto i_*\gamma + \Gamma^* \alpha'} \CH_k(X) \to 0,
	\end{align*}
where a left inverse of the first map is given by $(\gamma, \alpha')\mapsto \Phi_* \,\gamma$.
	\item \label{thm:flip-3}
	The above exact sequence induces an isomorphism of Chow groups
	\begin{align} \label{eqn:flips.chow}
		 \CH_{k}(X') \oplus \bigoplus_{i=0}^{n-m-1} \CH_{k-n+i}(S)  \xrightarrow{\sim} \CH_{k}(X), 
	\end{align}
given by $(\alpha', \oplus_{i=0}^{n-m-1}  \beta_i) \mapsto \Gamma^* \, \alpha' + i_*(\sum_{i=0}^{n-m-1} \zeta^i \cdot \psi^*\beta_i)$. Furthermore, in the above decomposition, the projection to the first summand is given by $\alpha \mapsto \alpha' = \Gamma_* \alpha$.
\end{enumerate}
\end{theorem}

Notice that in the flop case $m=n$, this result recovers the invariance of Chow groups under flops in \cite{LLW}; and in the flip case $m<n$, this theorem completes the discussion of \cite[\S 2.3]{LLW} by providing the complementary summands of the image of $\Gamma^*$ in the Chow group $\CH(X)$. Finally, as a blowup can be viewed as a standard flip of type $(n,0)$, above theorem recovers the blowup formula Theorem \ref{thm:blowup}.

\begin{proof}[Proof of first part of \eqref{thm:flip-1}] The equality $\Gamma_* \, \Gamma^* = \Id$ follows exactly the same line of proof of \cite[Theorem 2.1]{LLW}, as already mentioned in \cite[\S 2.3]{LLW}. We sketch the proof here for completeness. For any class $[W'] \in \CH_k(X')$, by Chow's moving lemma, (if allowing negative coefficients) we may assume it is represented by a cycle $W'$ which intersects $P'$ transversely. Therefore $\pi'^*[W'] = [\widetilde{W}]$ by \cite[Corollary 6.7.2]{Ful}, where $\widetilde{W}$ is the blowup of $W'$ along $W' \cap P'$. Hence $\Gamma^*[W'] = \pi_* [\widetilde{W}] = [W]$, where $W$ is the image of $\widetilde{W}$, and is also the proper transform of $W'$ along the birational rational map $f^{-1}$. Now we have:
	$$\pi^* [W] = [\widetilde{W}] + j_* \sum_{B} [E_{B}],$$ 
where let $B' \subset W' \cap P'$ be a component, then $E_{B} \subset E$ are $k$-cycles supported over components $\overline{B}  = \psi'(B') \subset \psi'(W' \cap P') \subset S$. A direct computation of dimensions shows that, for a general point $s$, the fiber $E_{B,s}$ over $s$ has dimension:
	$$\dim E_{B,s} \ge \dim E_{B}  - \dim (\overline{B}) \ge \dim E_{B}  - \dim ({B'}) = k - (k - (n+1)) = n+1.$$
Now $E_{B,s}$ must contain positive fibers of of $p'_s \colon \PP_s^n \times \PP_s^m \to \PP_s^m$, as $n+1 > n \ge m$. Hence $\pi_* j_*[E_B] = p_*[E_B] = 0$, and $\Gamma_*\, \Gamma^* [W'] = \pi'_* \, \pi^* \, [W]  =\pi'_* [\widetilde{W}] = [W']$. 
\end{proof}

\begin{remark} Notice that the above argument does {\em not} work in the other direction for $\Gamma^* \Gamma_* [V]$, where $[V] \in \CH_k(X)$. The reason is as follows: the fibre $E_{B',s}$ of the $k$-cycle $E_{B'}$ in $\pi'^*\Gamma^* [V] = \pi'^*[V'] = \widetilde{V} + j_* \sum E_{B'}$ has dimension $\ge m+1$, but $m \le n$, thus the fiber $E_{B',s}$ is not necessarily contracted by $p_*$. However, if $k \le m$ (in which case we may assume $V \cap P = \emptyset$) or $k \ge n+1 + \dim \psi(V \cap S)$ (for example, if $k \ge \dim S + n +1$), then the above argument still works, i.e., the following holds:
	$$\Gamma^* \, \Gamma_* [V] = [V] \quad \text{if} \quad k \le m+1 \text{~or~} k \ge n + \dim \psi(V \cap S).$$
For the intermediate cases $m+1\le k \le n + \dim \psi(V \cap S) \le n + \dim S$, the same argument only implies $\Gamma^* \, \Gamma_* [V] = [V] + i_* \sum_{Z \subset P} [Z]$ for certain cycles $Z \subset P$ supported on $P$; These cycles will be precisely explained by the statements \eqref{thm:flip-2} and \eqref{thm:flip-3} of the theorem.
\end{remark}

\begin{proof}[Proof of second part of \eqref{thm:flip-1}] 
It follows from Lemma \ref{lem:chern} that:
	\begin{align*}
	c_{m}(\sV) &= (-1)^m c_m(\sO_{P}(1) \boxtimes \Omega_{P'/S}(1)) = (-1)^m \sum_{t=0}^{m} \zeta^t \cdot c_{m-t}(\Omega_{P'/S}(1))\\
	&=  (-1)^m \sum_{t=0}^{m} (-1)^{m-t} \sum_{s=0}^{m-t} c_{s}(F') \cdot  (\zeta')^{m-s-t} \cdot \zeta^t,
	\end{align*}
and
	\begin{align*}
	c_{n}(\sV') &= (-1)^n c_n(\Omega_{P/S}(1)\boxtimes \sO_{P'}(1)  ) = (-1)^n \sum_{j=0}^{n}  c_{n-j}(\Omega_{P/S}(1))\cdot (\zeta')^j \\
	&=  (-1)^n \sum_{j=0}^{n} (-1)^{n-j} \sum_{i=0}^{n-j} c_{i}(F) \cdot  \zeta^{n-i-j} \cdot (\zeta')^{j}.
	\end{align*}
The map $\Phi_* \circ \Phi^*$ is given by the convolution of correspondences 
	$$c_{m}(\sV) *c_{n}(\sV') := p_{13*}(p_{12}^*(c_{n}(\sV')) \cdot p_{23}^* (c_{m}(\sV)) ) \in \CH^{m}(P' \times_S P'),$$
 where $p_{ij}$ are the obvious projections from $P' \times_S P \times_S P'$ to the corresponding factors; The cohomological degree $m$ is computed via $m+n -\dim (P/S) = m$. To avoid confusion, we denote the product $P' \times_S P \times_S P'$ by $P_1' \times_S P \times_S P_2'$, and denote the relative $\sO(1)$-classes of $P_1'$ and $P_2'$ by $\zeta'_1$ and $\zeta'_2$ respectively. Therefore
	\begin{align*}
	c_{m}(\sV) *c_{n}(\sV') = p_{13*}\left(\sum_{j=0}^{n} \sum_{t=0}^{m} (-1)^{j+t} \sum_{s=0}^{m-t} \sum_{i=0}^{n-j} c_{s}(F_2') \cdot c_{i}(F) \cdot \zeta^{n+t-i-j}  \cdot(\zeta_1')^{j} \cdot (\zeta_2')^{m-s-t} \right).
	\end{align*}
Since $p_{13*}(\zeta^k) = 0$ for all $0 \le k \le n-1$, in the above expression, the only terms inside the parentheses that could survive $p_{13*}$ are the ones whose indices satisfy $t-i-j \ge 0$. Thus we may assume the indices of the summation satisfy $j \le t \le m$ and $0 \le i \le t-j$. From the definition of the Segre class of $F$, we have $p_{13*}(\zeta^{n+k}) = s_k(F)$, hence:
	\begin{align*}
	c_{m}(\sV) *c_{n}(\sV') = \sum_{j=0}^{m} \sum_{t=j}^{m} (-1)^{j+t} \sum_{s=0}^{m-t}  \sum_{i=0}^{t-j} c_{i}(F) \cdot s_{t-i-j}(F) \cdot  c_{s}(F_2') \cdot(\zeta_1')^{j} \cdot (\zeta_2')^{m-s-t}.
	\end{align*}
From $c(F) s(F) = 1$, we know that $\sum_{i=0}^{t-j} c_{i}(F) \cdot s_{t-i-j}(F) = 0$ unless $t=j$, in which case $c_0(F) s_0(F) =1$. Hence above expression reduces to
	\begin{align*}
	c_{m}(\sV) *c_{n}(\sV') &= \sum_{j=0}^{m}  \sum_{s=0}^{m-j}  c_{s}(F_2') \cdot(\zeta_1')^{j} \cdot (\zeta_2')^{m-j-s} = \sum_{j=0}^{m} c_{m-j}(\shT_{P_2'/S}(-1)) \cdot(\zeta_1')^{j} \\
	& = c_m (\sO_{P_1'}(1) \boxtimes \shT_{P_2'/S}(-1)).
	\end{align*}
(For the second equality, we used Lemma \ref{lem:chern}.) On the other hand, the diagonal $\Delta_{P'} \subset P' \times_S P'$ is the zero locus of a regular section $s$ of the rank $m$ vector bundle $\sO_{P'}(1) \boxtimes \shT_{P'/S}(-1)$; The section $s$ under the canonical identification
	$$\Gamma(P' \times_S P', \sO_{P'}(1) \boxtimes \shT_{P'/S}(-1)) = \Gamma(S, F'^\vee \otimes F') = \Hom_S(F',F')$$
corresponding to $1_{F'} \colon F' \to F'$; Hence $ [\Delta_{P'}] = c_m(\sO_{P'}(1) \boxtimes \shT_{P'/S}(-1))$. Therefore
	$$c_{m}(\sV) *c_{n}(\sV')  = [\Delta_{P'}], \quad \text{hence} \quad \Phi_* \, \Phi^* = \Id_{\CH(P')}.$$
\end{proof}

Before proceeding the rest of the proof of Theorem \ref{thm:flip}, we study more about the maps $\Phi_*$ and $\Phi^*$. First, notice that the projective bundle formula Theorem \ref{thm:proj.bundle} can be regarded as equipping $\CH(P)$ and $\CH(P')$ with natural ``free module structures {\em over} $\CH(S)$".


\begin{lemma} \label{lem:flip:Phi}
	\begin{enumerate}
	\item	\label{lem:flip:Phi-1}
	 The maps  $\Phi_* \colon \CH(P) \to \CH(P')$ and $\Phi^* \colon \CH(P') \to \CH(P)$ are ``$\CH(S)$-linear", i.e. for all $\alpha \in \CH^*(P)$, $\alpha' \in \CH^*(P')$, $\theta \in \CH_*(S)$,
		$$\Phi_*(\alpha \cap \psi^* \theta) = \Phi_*(\alpha) \cap \psi'^* \theta, \qquad \Phi^*(\alpha' \cap \psi'^* \theta) = \Phi^*(\alpha') \cap \psi^* \theta.$$
	\item	 \label{lem:flip:Phi-2}
	Consider the following ``sub-$\CH(S)$-modules" of $ \CH(P)$:
		\begin{align*}
			& \CH(P)_{m} : = \Span\{\zeta^{n-m}, \zeta^{n-m+1}, \ldots, \zeta^{m} \} =  \zeta^{n-m} \cdot \CH(S) \oplus \cdots \oplus  \zeta^{m} \cdot \CH(S) \subset \CH(P); \\
			& \CH(P)_{\Phi_*=0} : = \Span \{ 1, \zeta, \ldots, \zeta^{n-m-1} \}  = 1 \cdot \CH(S) \oplus \cdots \oplus  \zeta^{n-m-1} \cdot \CH(S) \subset \CH(P).
		\end{align*}
	Then $\Phi_*$ is injective on $\CH(P)_{m}$, and with image $\Phi_*(\CH(P)_{m}) = \CH(P')$; Furthermore, $\Phi_*$ is zero on $\CH(P)_{\Phi_*=0}$.
	\item \label{lem:flip:Phi-3}
	$\Phi^*$ is injective, and its image $\Im(\Phi^*)$ satisfies $\CH(P)_{\Phi_*=0}  \simeq \CH(P)/\Im(\Phi^*)$.
	\end{enumerate}
\end{lemma}

\begin{proof} The statement \eqref{lem:flip:Phi-1} follows directly from the projection formula \cite[Theorem 3.2(c)]{Ful} and Theorem \ref{thm:proj.bundle} \eqref{thm:proj.bundle-1}. For \eqref{lem:flip:Phi-2}, notice that for any $0 \le i \le m$,
	\begin{align*}
		\Phi_* (\zeta^{n-m+i})& = p'_* (c_{m}(\sV) \cup p^*\zeta^{n-m+i})  \\ 
		&=  p'_*\left((-1)^m \sum_{t=0}^{m} (-1)^{m-t} \sum_{s=0}^{m-t} c_{s}(F') \cdot  (\zeta')^{m-s-t} \cdot \zeta^{n-m+i+t}\right)  \\
		& = \sum_{t=m-i}^{m} (-1)^{t} \sum_{s=0}^{m-t} c_{s}(F') \cdot s_{t-(m-i)}(F)  \cdot (\zeta')^{m-s-t} \\
		& =  (-1)^{m-i}  (\zeta')^{i} + (\text{lower order terms}). 
		\end{align*}
(For example, $\Phi_* (\zeta^m) = (\zeta')^{n} + (\text{lower order terms})$ and $\Phi_* (\zeta^n) = (-1)^n (\zeta')^{0}$.) This computation together with  \eqref{lem:flip:Phi-1} shows that $\Phi_* (\zeta^{n-m+i} \cap \psi^* \theta) =  (\pm \zeta'^{i} + (\text{lower order terms})) \cap \psi'^* \theta$ for all $\theta \in \CH(S)$, which implies the injectivity of $\Phi_*$ on $\CH(P)_{m}$; The same computation in the case $i<0$ shows that $\Phi_*$ is zero on $\CH(P)_{\Phi_*=0}$. A similar computation shows:
	\begin{align*}
		\Phi^*(\zeta'^i) = \pm \zeta^{n-m+i} + (\text{lower order terms})
	\end{align*}
for $0 \le i \le m$, which implies \eqref{lem:flip:Phi-3}. \end{proof}


\begin{lemma}\label{lem:flip:van} For any $\gamma \in \CH(P)$, if $\Phi_*(\gamma) = 0$ and $i^* \, i_* \gamma =0$. Then $\gamma =0$.
\end{lemma}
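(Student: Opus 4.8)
The plan is to first locate $\gamma$ inside a small, explicit free $CH(S)$-submodule of $CH(P)$ using the first hypothesis together with Lemma~\ref{lem:flip:Phi}, and then kill it by a descending elimination driven by the self-intersection formula for $i\colon P\hookrightarrow X$ and the second hypothesis.

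For the first part I would upgrade Lemma~\ref{lem:flip:Phi} to a direct-sum decomposition of $CH(P)$ as a $CH(S)$-module. The leading-term computation in the proof of that lemma, $\Phi^*((\zeta')^{i})=\pm\,\zeta^{n-m+i}+(\text{lower powers of }\zeta)$ for $0\le i\le m$, shows that the matrix expressing $\{1,\zeta,\dots,\zeta^{n-m-1},\,\Phi^*(1),\dots,\Phi^*((\zeta')^{m})\}$ in the standard $CH(S)$-basis $\{1,\zeta,\dots,\zeta^{n}\}$ of $CH(P)$ is triangular with $\pm1$ on the diagonal, hence invertible; therefore $CH(P)=CH(P)_{\Phi_*=0}\oplus\Phi^*(CH(P'))$. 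Since $\Phi_*\Phi^*=\Id_{CH(P')}$ by Theorem~\ref{thm:flip}(1) while $\Phi_*$ vanishes on $CH(P)_{\Phi_*=0}$, writing $\gamma=\gamma_0+\Phi^*\delta$ and applying $\Phi_*$ shows that $\Phi_*(\gamma)=0$ forces $\delta=0$. Hence $\gamma$ lies in the explicit submodule $CH(P)_{\Phi_*=0}=\langle 1,\zeta,\dots,\zeta^{n-m-1}\rangle$, say
$$\gamma=\sum_{a=0}^{n-m-1}\zeta^{a}\cdot\psi^*\theta_a,\qquad \theta_a\in CH(S).$$

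For the second part, $i$ is a regular embedding of codimension $m+1$ with $\sN_{P/X}=\sO_{\PP_{\rm sub}(F)}(-1)\otimes\psi^*F'$, so the self-intersection formula yields
$$i^*i_*\gamma=c_{m+1}(\sN_{P/X})\cap\gamma=\Big(\sum_{j=0}^{m+1}(-1)^{m+1-j}\,\zeta^{m+1-j}\cdot\psi^*c_j(F')\Big)\cap\gamma.$$
The payoff of the degree bound from the first part is that $\gamma$ has $\zeta$-degree at most $n-m-1$, so the right-hand side has $\zeta$-degree at most $n$ and no reduction modulo the defining relation of $\PP_{\rm sub}(F)$ is needed: one compares coefficients directly against the free basis $1,\zeta,\dots,\zeta^{n}$. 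Writing $i^*i_*\gamma=\sum_l\zeta^{l}\cdot\psi^*\eta_l$, the coefficient of $\zeta^{n}$ is $\eta_n=(-1)^{m+1}\theta_{n-m-1}$, so $i^*i_*\gamma=0$ and the injectivity of $\psi^*$ force $\theta_{n-m-1}=0$; feeding this back, $\eta_{n-1}=(-1)^{m+1}\theta_{n-m-2}$, so $\theta_{n-m-2}=0$; and a descending induction on $a$ from $n-m-1$ down to $0$ gives $\theta_a=0$ for all $a$, i.e.\ $\gamma=0$.

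I expect the only genuine obstacle to be the first part: one must be sure that $\Phi_*(\gamma)=0$ places $\gamma$ in the \emph{specific} submodule $\langle 1,\zeta,\dots,\zeta^{n-m-1}\rangle$, not merely in some abstract complement of $\Phi^*(CH(P'))$. This is exactly what the triangularity of the basis change (equivalently, the precise leading terms of $\Phi^*((\zeta')^{i})$) delivers, and it is what makes available the degree bound $\deg_\zeta\gamma\le n-m-1$ used to prevent any overflow past $\zeta^{n}$ in the self-intersection computation; once that bound is in hand, the remaining elimination is routine.
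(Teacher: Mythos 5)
Your proof is correct and follows essentially the same route as the paper: both use the leading-term computation of Lemma~\ref{lem:flip:Phi} to pin $\gamma$ down in $\langle 1,\zeta,\dots,\zeta^{n-m-1}\rangle$, then expand $c_{m+1}(F'\otimes\sO_P(-1))$ and eliminate the coefficients $\theta_{n-m-1},\dots,\theta_0$ by descending induction against the free basis $\{1,\dots,\zeta^n\}$. The only cosmetic difference is that you reach the degree bound via the decomposition $CH(P)=CH(P)_{\Phi_*=0}\oplus\Phi^*(CH(P'))$ and $\Phi_*\Phi^*=\Id$, whereas the paper appeals directly to the injectivity of $\Phi_*$ on the complementary submodule — the same triangularity fact read in two equivalent ways.
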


\begin{proof}  Let $\gamma = \sum_{i=0}^{n} \zeta^i \cdot \psi^* \theta_i$ for $\theta_i \in \CH(S)$, then from above lemma, $\Phi_* \gamma = 0$ implies $\theta_{n-m} = \theta_{n-m+1} = \ldots = \theta_{n} = 0$. On the other hand, $i^* \, i_* \gamma = c_{m+1}(\sN_{P/X}) \cap \gamma = c_{m+1}(F'\otimes \sO_P(-1)) \cap \gamma = 0$, and 
	\begin{align*}
	c_{m+1}(F'\otimes \sO_P(-1)) & = \sum_{i=0}^{m+1} c_{m+1-i}(F') (-\zeta)^i \\
	&= (-1)^{m+1} \zeta^{m+1} + (\text{lower order terms}).  
	\end{align*}
Hence by the uniqueness of an expression of the form $\sum_{i=0}^{n} \zeta^i \cdot \psi^*(\blank)$, one can inductively show $\theta_{n-m-1} = 0$, $\theta_{n-m-2} = 0$, $\ldots$, $\theta_0 = 0$. Therefore $\gamma=0$.
\end{proof}

\begin{proof}[Proof of \eqref{thm:flip-2} of Theorem \ref{thm:flip}] To show the sequence of \eqref{thm:flip-2} is a complex, simply observe that for any $\gamma' \in \CH(P)$,
	\begin{align*}
	 i_*\,\Phi^*\,\gamma' + \Gamma^*(-i'_*\, \gamma') &= i_*\,p_*\,(c_n(\sV')\cap p'^*\gamma') - \Gamma^*\,i'_*\, \gamma' \\
	 &= \pi_*\,j_*\,(c_n(\sV')\cap p'^*\gamma')  - \Gamma^*\,i'_*\, \gamma'  \\
	& \overset{\text{(k.f.)}}{=\mathrel{\mkern-3mu}=}  \pi_*\,\pi'^*\,i'_*\, \gamma' - \pi_*\,\pi'^*\,i'_*\, \gamma' = 0.
	\end{align*}
(In the above equality and later, (k.f.) means by the key formula (\ref{eqn:k.f.blowup}) of blowup.)

For any $\alpha \in \CH_k(X)$, $\alpha - \Gamma^* \, \Gamma_* \, \alpha = 0$ on $\CH_k(X \backslash P)$. Then from the exact sequence $\CH_k(P) \to \CH_k(X) \to \CH_k(X\backslash P) \to 0$, there exists an element $\gamma \in \CH_k(P)$ such that $\alpha = \Gamma^* \, \Gamma_* \, \alpha + i_* \, \gamma$. This establishes the surjectivity of the last map of the sequence of \eqref{thm:flip-2}. The injectivity of the first map and the left inverse statement follow directly from $\Phi_* \, \Phi_* = \Id$. To show the sequence is exactness in the middle, assume 
$(\gamma, \alpha') \in \CH(P)\oplus \CH(X')$ such that $i_*\gamma + \Gamma^* \alpha'=0$, and we want to find $\gamma'$ such that $(\gamma, \alpha') = (\Phi^* \gamma', -i'_* \gamma)$. Since
	\begin{align*}
	\alpha'  & = \Gamma_* \Gamma^* \alpha'  = -  \Gamma_* \, i_* \,\gamma = - \pi'_* \, \pi^* \,  i_* \,\gamma \overset{\text{(k.f.)}}{=\mathrel{\mkern-3mu}=}  -\pi'_* j_* (c_{m}(\sV) \cap p^*\,\gamma) \\
	 & =  -i'_* \, p'_*(c_{m}(\sV) \cap  p^*\, \gamma) = -i'_* \, \Phi_* (\gamma).
 	\end{align*}
Define
	$\gamma_0 = \gamma - \Phi^* \Phi_* \gamma.$
The goal is to show $\gamma_0 =0$. Notice that 
	\begin{align*} \Phi_* \gamma_0 &= \Phi_* \gamma -  \Phi_*  \Phi^* \Phi_* \gamma = 0, \qquad \text{and also} \\
	i_*\,\gamma_0 & = i_* \,\gamma - i_* \, \Phi^* \Phi_* \gamma =  i_* \,\gamma - i_* \, p_* (c_n{\sV'} \cap p'^* (\Phi^* \, \gamma)) \\
	& = i_* \,\gamma - \pi_*\, j_*\,(c_n{\sV'} \cap p'^* ( \Phi_* \, \gamma))  
	 \overset{\text{(k.f.)}}{=\mathrel{\mkern-3mu}=} i_* \,\gamma -  \pi_* \,\pi'^*\, i'_* \, \Phi_* \, \gamma  \\
	 & =  i_* \,\gamma - \Gamma^* (i'_* \, \Phi_* \, \gamma) =  i_* \,\gamma  + \Gamma^* \, \alpha' =  0.
	\end{align*}
From Lemma \ref{lem:flip:van}, $\gamma_0 = 0$, hence $(\gamma, \alpha') = (\Phi^* \gamma', -i'_* \gamma)$ for $\gamma' = \Phi_* \, \gamma$.
\end{proof} 

\begin{proof}[Proof of \eqref{thm:flip-3} of Theorem \ref{thm:flip}] As before, from Lemma \ref{lem:flip:Phi} \eqref{lem:flip:Phi-3}  and the exact sequence of \eqref{thm:flip-2}, we obtain that for any $\alpha \in \CH_{k}(X)$, there exists an $\alpha' \in \CH_{k}(X)$ and a unique $\gamma \in \CH_k(P)$, such that $\Phi_* \gamma = 0$ and $\alpha = \Gamma^* \alpha' + i_* \gamma$. Further notice that 
	\begin{align*}
		\Gamma_*  i_* \gamma = \pi'_*\pi^*i_*\gamma \overset{\text{(k.f.)}}{=\mathrel{\mkern-3mu}=} \pi'_*j_*(c_n(\sV) \cap p^*\gamma) 
		= i'_*p'_* j_*(c_n(\sV) \cap p^*\gamma)  = i'_* \Phi_* \gamma = 0.
	\end{align*}
Therefore $\alpha' = \Gamma_* \alpha$. Hence we have established: 
	\begin{align*}
		\CH_{k}(X) &= \Gamma^* \, \CH_{k}(X') \oplus i_* (\CH(P)_{\Phi_* = 0}) \\
		& = \Gamma^*\, \CH_{k}(X') \oplus i_*(\bigoplus_{i=0}^{n-m-1} \zeta^i \cdot \psi^* \CH_{k-n+i}(S)),
	\end{align*}
and the projection to the first summand is given by $\alpha \mapsto \alpha' = \Gamma_* \alpha$. 
\end{proof}

We could also write down explicitly the projectors to the last $(n-m)$ summands; We omit the details here as we will not need them. As before, by Manin's identity principle:
\begin{corollary}\label{cor:flip} If $X$ and $X'$ are smooth and projective over some  ground field $\kk$, then there is an isomorphism of Chow motives over $\kk$:
		\begin{align*} \label{eqn:flips.motive}
		[\Gamma]^t \oplus \left( \bigoplus_{i=m+1}^n i_* \circ h^{n-i} \circ \psi^* \right) \colon  \foh(X') \oplus \left( \bigoplus_{i=m+1}^{n} \foh(S)(i) \right) \xrightarrow{\sim} \foh(X).
	\end{align*}
\end{corollary}
As before, the blowup formula for Chow motives of \cite{Manin} could be viewed as the case $m=0$ of the above corollary, as a blowup can be viewed as a standard flip of type $(n,0)$.

\newpage
\section{Main results}
Let $\sG$ be a coherent sheaf of homological dimension $\le 1$ on $X$ (i.e., $X$ is covered by open subschemes $U \subset X$ over which there is a resolution $\sF \xrightarrow{~\sigma~} \sE \twoheadrightarrow \sG$ such that $\sF$ and $\sE$ are locally free of rank $m$ and $n$ respectively, and $\sG = {\rm Coker}(\sigma)$ is of rank $r= n-m \ge 0$).
Denote the projection by $\pi \colon \PP(\sG) \to X$. Similar to the projective bundle case, for any $i \in [0,r-1]$, denote by $\pi_{i}^* \colon \CH_{k-(r-1)+i}(X) \to \CH_{k}(\PP(\sG))$ be map $\pi_i^*(\blank) = \zeta^i \cdot \pi^*(\blank)$, where $\zeta = c_1(\sO_{\PP(\sG)}(1))$. Consider the fiber product
	$$\Gamma : = \PP(\sG) \times_X \PP(\sExt^1(\sG,\sO_X)).$$
Denote the projections by $r_+ \colon \Gamma \to \PP(\sG)$ and $r_- \colon \Gamma \to \PP(\sExt^1(\sG,\sO_X))$. As before, we denote $\Gamma_*\colon  \CH_{k-r}( \PP(\sG)) \to \CH_k(\PP(\sExt^1(\sG,\sO_X)))$ and $\Gamma^* \colon \CH_k(\PP(\sExt^1(\sG,\sO_X))) \to \CH_{k-r}(\PP(\sG))$ the maps induced by the correspondence $[\Gamma] \in \CH(\PP(\sG) \times \PP(\sExt^1(\sG,\sO_X)))$, i.e. 
	\begin{align*}
	\Gamma_*(\blank) =   r_{-\,*} \, r_+^*(\blank)  \quad \text{and} \quad \Gamma^*(\blank)  =   r_{+\,*} \, r_{-}^* (\blank).	
	\end{align*}

The main result of this paper is the following:
\begin{theorem} \label{thm:main} Let $X$ be a Cohen--Macaulay scheme of pure dimension, and let $\sG$ be a coherent sheaf of rank $r \ge 0$ on $X$ of homological dimension $\le 1$. Assume either
	\begin{enumerate}
		\item[(A)] $\PP(\sG)$ and $\PP(\sExt^1(\sG,\sO_X))$ are non-singular and quasi-projective, and 
		\begin{equation} \label{eqn:weak.dim}
	{\rm codim} (X^{\ge r+1}(\sG) \subset X) = r+1, \quad  {\rm codim} (X^{\ge r+i}(\sG) \subset X) \ge  r+2i ~\text{if}~ i \ge 1; ~\text{or}
		\end{equation}
		\item[(B)]  ${\rm codim} (X^{\ge r+i}(\sG) \subset X) =i(r+i)$ (the expected codimension), for all $i \ge 1$.
	\end{enumerate}
Then for any $k \ge 0$, there is an isomorphism of Chow groups:
	\begin{align}\label{eqn:main.thm}
	 \bigoplus_{i=0}^{r-1} \CH_{k-(r-1)+i}(X)  \oplus \CH_{k-r}(\PP(\sExt^1(\sG,\sO_X))) \xrightarrow{\sim} \CH_{k}(\PP(\sG))
	\end{align}
given by $(\oplus_{i=0}^{r-1} \alpha_i,\gamma) \mapsto   \beta = \sum_{i=0}^{r-1} c_1(\sO_{\PP(\sG)}(1))^i \cap \pi^*\alpha_i + \Gamma^* \gamma$. The projection $\beta \mapsto \alpha_i$ is given by the map $\pi_{i\,*}$ of  Lemma \ref{lem:pi_proj}, $0 \le i \le r-1$, and the projection $\beta \mapsto \gamma$ is given by $(-1)^r \Gamma_*$.
\end{theorem}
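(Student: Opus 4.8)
\medskip

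The plan is to realize $\PP(\sG)$ and $\PP(\sExt^1(\sG,\sO_X))$ as the two ``Cayley's trick'' zero loci attached to a single incidence hypersurface, and then to play the decompositions of Thm.~\ref{thm:Cayley}, Thm.~\ref{thm:proj.bundle}, Thm.~\ref{thm:blowup} and Thm.~\ref{thm:flip} off against one another. First I would reduce to the case where the $2$-step resolution exists globally, $0\to\sF\xrightarrow{\sigma}\sE\to\sG\to 0$ with $\sE,\sF$ locally free of ranks $n,m$ (so $r=n-m$); this is automatic if $X$ has the resolution property, e.g.\ under quasi-projectivity, and the general case reduces to it, the statement and the maps in it ($\Gamma^{*},\pi_i^{*},\pi_{i\,*}$) being intrinsic to $\sG$. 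Set $\bar\pi\colon\PP(\sE)\to X$, $\rho\colon\PP(\sF^\vee)\to X$, and let
\[
\mathbb{H}\;:=\;Z(\bar\sigma)\;\subset\;\PP(\sE)\times_X\PP(\sF^\vee)
\]
be the zero locus of the section $\bar\sigma$ of $\sO_{\PP(\sE)}(1)\boxtimes\sO_{\PP(\sF^\vee)}(1)$ induced by $\sigma$. By Grothendieck's identifications $\PP(\sE)\times_X\PP(\sF^\vee)=\PP_{\PP(\sE)}(\shN)$ for $\shN:=\bar\pi^{*}\sF^{\vee}\otimes\sO_{\PP(\sE)}(1)$ (rank $m$), and under this $\mathbb{H}$ plays the role of the hypersurface $\shH_s$ of Thm.~\ref{thm:Cayley} for the section of $\shN$ whose zero locus is $\PP(\sG)\subset\PP(\sE)$ (cut out there by the $m$ linear forms $\im(\sF\to\sE)$); symmetrically $\PP(\sE)\times_X\PP(\sF^\vee)=\PP_{\PP(\sF^\vee)}(\shN')$ for $\shN':=\rho^{*}\sE\otimes\sO_{\PP(\sF^\vee)}(1)$ (rank $n$), and $\mathbb{H}$ is the hypersurface $\shH_s$ of Thm.~\ref{thm:Cayley} for the section whose zero locus is $\PP(\sExt^1(\sG,\sO_X))=\PP(\Coker(\sigma^\vee))\subset\PP(\sF^\vee)$. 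One checks that $\Gamma=\PP(\sG)\times_X\PP(\sExt^1(\sG,\sO_X))$ embeds in $\mathbb{H}$ as $\mathbb{H}|_{\PP(\sG)}\cap\mathbb{H}|_{\PP(\sExt^1(\sG,\sO_X))}$, and that the two bundle projections $\mathbb{H}|_{\PP(\sG)}\to\PP(\sG)$ and $\mathbb{H}|_{\PP(\sExt^1(\sG,\sO_X))}\to\PP(\sExt^1(\sG,\sO_X))$ restrict on $\Gamma$ to $r_{+}$ and $r_{-}$.

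Under hypothesis (B) the expected-dimension assumption makes both sections above regular (their zero loci are of pure codimension $m$, resp.\ $n$, which over the Cohen--Macaulay spaces $\PP(\sE)$, $\PP(\sF^\vee)$ forces the section to be regular), so Thm.~\ref{thm:Cayley} applies on both sides and produces two splittings
\begin{multline*}
\bigoplus_{i=0}^{m-2}CH_{k-(m-2)+i}(\PP(\sE))\ \oplus\ CH_{k-(m-1)}(\PP(\sG))\ \cong\ CH_{k}(\mathbb{H}) \\
\cong\ \bigoplus_{j=0}^{n-2}CH_{k-(n-2)+j}(\PP(\sF^\vee))\ \oplus\ CH_{k-(n-1)}(\PP(\sExt^1(\sG,\sO_X))).
\end{multline*}
Expanding the projective-bundle groups with Thm.~\ref{thm:proj.bundle} ($\bar\pi$ a $\PP^{n-1}$-bundle, $\rho$ a $\PP^{m-1}$-bundle) and comparing, the two resulting towers of shifted copies of $CH(X)$ differ by $\tfrac{(1-x^{n-1})(1-x^{m})-(1-x^{m-1})(1-x^{n})}{(1-x)^{2}}=x^{m-1}(1+x+\cdots+x^{r-1})$ copies, which after tracking the shifts is exactly $\bigoplus_{i=0}^{r-1}CH_{k-(r-1)+i}(X)$; this establishes (\ref{eqn:main.thm}) as abelian groups. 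To pin down the maps one chases the explicit formulas of Thm.~\ref{thm:Cayley}: the $CH(\PP(\sExt^1(\sG,\sO_X)))$-summand enters via $\gamma\mapsto j'_{*}p'^{*}\gamma$ and is projected onto the $CH(\PP(\sG))$-summand by $\beta\mapsto(-1)^{m-1}p_{*}j^{*}\beta$, so the induced map $CH(\PP(\sExt^1(\sG,\sO_X)))\to CH(\PP(\sG))$ is $(-1)^{m-1}p_{*}j^{*}j'_{*}p'^{*}$, which a base-change computation along $\Gamma\hookrightarrow\mathbb{H}$ identifies with $\pm\,r_{+*}r_{-}^{*}=\pm\,\Gamma^{*}$, the signs $(-1)^{m-1}$ and $(-1)^{n-1}$ multiplying to $(-1)^{m+n}=(-1)^{r}$; likewise the residual $CH(X)$-copies go to the classes $\zeta^{i}\cap\pi^{*}(\blank)$, and the inverse comes out as $\beta\mapsto\big(\oplus_{i}\pi_{i\,*}\beta,\ (-1)^{r}\Gamma_{*}\beta\big)$ with $\pi_{i\,*}$ as in Lem.~\ref{lem:pi_proj}.

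Under hypothesis (A) the sections above need not be regular, since they degenerate over $X^{\ge r+1}(\sG)$, so one exploits the assumed smoothness of $\PP(\sG)$ and $\PP(\sExt^1(\sG,\sO_X))$ instead. Over $X^{\circ}:=X\setminus X^{\ge r+2}(\sG)$ the sheaf $\sExt^1(\sG,\sO_X)$ is a line bundle on $X^{\ge r+1}(\sG)\cap X^{\circ}$, so $\PP(\sExt^1(\sG,\sO_X))|_{X^{\circ}}=X^{\ge r+1}(\sG)\cap X^{\circ}$ and $\PP(\sG)|_{X^{\circ}}$ is an honest Cayley hypersurface of a regular section of a rank-$(r+1)$ bundle, whence Thm.~\ref{thm:Cayley} already gives (\ref{eqn:main.thm}) over $X^{\circ}$; the deeper degeneracy loci are then incorporated by blowing up along the (smooth, by hypothesis) centres lying over them and invoking the standard-flip decomposition of Thm.~\ref{thm:flip} together with Thm.~\ref{thm:blowup} and Thm.~\ref{thm:proj.bundle}, cancelling the common exceptional contributions. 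The weak dimension condition (\ref{eqn:weak.dim}) is imposed precisely so that these centres have the expected codimension and the loci over $X^{\ge r+2}(\sG)$ have codimension $\ge r+2$ in both $\PP(\sG)$ and $\PP(\sExt^1(\sG,\sO_X))$ in every degree, so that the isomorphism built over $X^{\circ}$ extends uniquely across $X^{\ge r+2}(\sG)$; its compatibility with $\Gamma^{*},\pi_i^{*},\pi_{i\,*}$ is then checked as in case (B).

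The step I expect to be the main obstacle is the second half of the case-(B) argument. The two Cayley splittings of $CH_{k}(\mathbb{H})$ are a priori unrelated, and one must show they interlock so that precisely the $\PP(\sG)$- and $\PP(\sExt^1(\sG,\sO_X))$-summands are matched by the correspondence $\Gamma$ while the residual projective-bundle terms reorganize into exactly $r$ clean copies of $CH(X)$. Since the embedding $\Gamma\hookrightarrow\mathbb{H}$ is not transverse, this demands a careful excess-intersection/Gysin analysis along $\Gamma\subset\mathbb{H}$, and it is exactly here that the excess-bundle computation underlying Thm.~\ref{thm:flip} is needed, both to identify the correspondence correctly and to fix the sign $(-1)^{r}$. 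The reduction to a global resolution and, in case (A), the construction of the blow-ups resolving the deeper degeneracy loci together with the extension of the isomorphism across $X^{\ge r+2}(\sG)$ are less serious but still require care.
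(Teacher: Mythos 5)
Your proposal takes a genuinely different route from the paper. The paper's proof under condition (B) stratifies $X$ by degeneracy loci, proves a ``virtual flips'' lemma (Lem.~\ref{lem:virtual.flips}) on each stratum $X^{\geq r+i+1}\setminus X^{\geq r+i+2}$ via the excess-bundle computations of Lem.~\ref{lem:stratum}, and then integrates these stratum-by-stratum decompositions through excision exact sequences, carefully tracking both surjectivity (image of each stratum lands in the image of (\ref{eqn:main.thm})) and injectivity (a diagram chase showing $\Gamma^*|_{\Im k_{i*}}$ injective for each $i$). Under condition (A), the paper's proof follows Fu--Wang: one uses Chow's moving lemma to represent a class by a cycle transverse to the stratification, shows that over the open stratum Lem.~\ref{lem:variant:Cayley} gives the desired identities, and then kills the error terms over deeper strata by a direct fibre-dimension count. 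Your proposal instead works globally, realising $\PP(\sG)$ and $\PP(\sExt^1(\sG,\sO_X))$ as the two Cayley loci of a single incidence hypersurface $\mathbb{H}\subset\PP(\sE)\times_X\PP(\sF^\vee)$ and decomposing $CH(\mathbb{H})$ in two ways via Thm.~\ref{thm:Cayley}. This ``HPD-type'' picture is an elegant idea, parallel to how the derived-category version was proved in \cite{JL18}, and if carried through would give a more structural explanation of the formula. However, as written it has real gaps.

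The central gap is the one you flag yourself: after obtaining the two Cayley splittings of $CH_k(\mathbb{H})$, matching the $CH(X)$-generating functions
$x^{m-1}(1+x+\cdots+x^{r-1})$
does not by itself produce an isomorphism, since direct-sum decompositions of abelian groups are not unique and one cannot simply ``cancel common summands.'' To make the argument work you would have to exhibit explicit commuting projectors on $CH(\mathbb{H})$ realising the two splittings simultaneously, and show that the idempotent cutting out $CH(\PP(\sG))$ on one side is carried by the $\mathbb{H}$-correspondence to the idempotent cutting out $CH(\PP(\sExt^1(\sG,\sO_X)))$ on the other, with the residual idempotents collapsing to exactly $r$ copies of the projective-bundle projectors on $CH(X)$. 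This is not a cosmetic check: it is precisely the content of the theorem, and it requires either the excess-intersection analysis along $\Gamma\subset\mathbb{H}$ that you defer, or a computation equivalent to the paper's Lem.~\ref{lem:stratum} and Lem.~\ref{lem:virtual.flips}. A secondary issue with the case-(B) argument is that Thm.~\ref{thm:Cayley}, as proved in \S\ref{sec:Cayley}, does \emph{not} assume $X$ (here $\PP(\sE)$ or $\PP(\sF^\vee)$) is Cohen--Macaulay, but it does require the section to be regular, and you correctly observe that under (B) the expected codimension on a Cohen--Macaulay ambient scheme forces regularity; however your reduction to a global two-step resolution presupposes the resolution property (e.g.\ quasi-projectivity), whereas condition (B) of the theorem does not assume quasi-projectivity, so the reduction is not free and would need a gluing argument of its own; the paper's stratum-by-stratum approach sidesteps this because all constructions (Lem.~\ref{lem:pi_proj}, Lem.~\ref{lem:stratum}) are made local on $X$.

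Your sketch for condition (A) has more serious problems. You say the deeper degeneracy loci are handled ``by blowing up along the (smooth, by hypothesis) centres lying over them,'' but condition (A) does not assert smoothness of the degeneracy loci $X^{\geq r+i}(\sG)$ for $i\geq 2$; only $\PP(\sG)$ and $\PP(\sExt^1(\sG,\sO_X))$ are assumed non-singular, and in general the higher strata are singular. You also claim the isomorphism built over $X^\circ=X\setminus X^{\geq r+2}$ ``extends uniquely across $X^{\geq r+2}(\sG)$,'' which is not a valid operation for Chow groups: the restriction $CH(Y)\to CH(Y\setminus W)$ is typically non-injective, so an identity proved on the open part does not propagate. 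This is exactly why the paper's first approach invokes Chow's moving lemma (hence the non-singularity and quasi-projectivity hypotheses) to represent classes transversely to the stratification, and then kills the extra contributions over $X^{\geq r+2}$ by the fibre-dimension estimate in Lem.~\ref{lem:injectivity} and Lem.~\ref{lem:surjectivity}; the weak codimension bound (\ref{eqn:weak.dim}) is used precisely so that the error cycles $F_C$ contain positive-dimensional fibres of $r_\pm$ and hence push forward to zero. Some mechanism of this kind is required; blowup and ``extension'' do not substitute for it.
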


\begin{remark} \label{remark:exp:dim}
\begin{enumerate}[ leftmargin=*, label=(\roman*)]
	\item \label{remark:exp:dim-1} If $X$ is irreducible, then the dimension condition  (\ref{eqn:weak.dim}) of (A) is equivalent to the requirement that $\PP(\sExt^1(\sG,\sO_X))$ maps birationally to $X^{\ge r+1}(\sG)$, and $\PP(\sG)$, $\PP(\sExt^1(\sG,\sO_X))$ and $\Gamma$
are irreducible and have expected dimensions: 
	$$ \dim  \PP(\sG) = \dim X -1 + r, \qquad \dim \PP(\sExt^1(\sG,\sO_X)) = \dim X -1 -r; $$
	$$ \dim \Gamma = \dim X - 1.$$
 
 	\item \label{remark:exp:dim-2} The only place that we need $\PP(\sG)$ and $\PP(\sExt^1(\sG,\sO_X))$ to be nonsingular and quasi-projective in (A) is to use Chow's moving lemma. Hence the result holds as long as Chow's moving lemma holds for $\PP(\sG)$ and $\PP(\sExt^1(\sG,\sO_X))$.
 
 	 \item  \label{remark:exp:dim-3} It follows from \cite[Theorem 3.4]{JL18} that if $X$ is nonsingular, and $\PP(\sG)$, $\PP(\sExt^1(\sG,\sO_X))$ have expected dimension. Then $\PP(\sG)$ is nonsingular if and only if $\PP(\sExt^1(\sG,\sO_X))$ is. 
 
	\item  \label{remark:exp:dim-4}  The requirement $r+2i$ for codimension in (A) is much weaker than the expected one $i(r+1)$ if $i >>1$, which is required by (B). On the other hand, (B) only requires very weak regularity conditions on the schemes -- $X$ being Cohen-Macaulay. (In fact, the Cohen-Macaulay condition can be dropped, as long as each stratum $X^{\ge i}(\sG) \backslash X^{\ge i+1}(\sG) \subset X$ is a regular immersion of expected dimension.) 
 \end{enumerate}
 \end{remark}
 
\begin{corollary} \label{cor:main.motive} If  $\PP(\sG)$, $\PP(\sExt^1(\sG,\sO_X))$ and $X$ are smooth and projective over some ground field $\kk$, then there is an isomorphism of Chow motives:
	\begin{align*} \label{eqn:cayley.chow.motive}
		\bigoplus_{i=0}^{r-1} h^{r-1-i} \circ \pi^* \oplus [\Gamma]^t \colon \bigoplus_{i=0}^{r-1} \foh(X)(i) \oplus \foh(\PP(\sExt^1(\sG,\sO_X)))(r) \xrightarrow{\sim} \foh(\PP(\sG)). 
	\end{align*}
\end{corollary}
\begin{proof} Similarly as Corollary \ref{cor:Cayley}, for any smooth $T$,  the same constructions and the theorem applies to $X \times T$ and $\sG \boxtimes \sO_T$,   hence in particular the identities $\Id = \Gamma^*\,\Gamma_* + \sum \pi_i^* \pi_{i\,*}$, $\Gamma_* \Gamma^* = \Id$, $\pi_{i\,*} \,\pi_i^* = \Id$, etc (see Lemma \ref{lem:pi_proj} and Lemma \ref{lem:surjectivity} below) hold for all $X \times T$ and $\sG \boxtimes \sO_T$. Then the result follows from Manin's identity principle.
\end{proof}

Before proceeding with the proofs of the theorem, we first explore some general facts.  

\begin{lemma} \label{lem:pi_proj}  
\begin{enumerate} 
	\item \label{lem:pi_proj-1}
	  Define $\pi_{i\,*} \colon \CH_{k}(\PP(\sG)) \to \CH_{k-(r-1)+i}(X)$ the same way as (\ref{eqn:lem:proj}):
\begin{align*}
		\pi_{i\,*} (\blank):= \sum_{j=0}^{r-1-i} (-1)^jc_{j}(\sG) \cap \pi_* (\zeta^{r-1-i-j} \cdot (\blank) ), \quad \text{for} \quad i = 0, 1, \ldots , r-1.
		\end{align*}
	 Then the maps $\pi_{i\,*}$ and $\pi_j^*$ satiafy
		$$\pi_{i\,*} \,\pi_i^* = \Id_{\CH(X)}, \qquad \pi_{i\,*} \,\pi_j^* = 0, \text{~if~} i \ne j, ~ i,j \in [0,r-1].$$
	\item \label{lem:pi_proj-2}
	In the local situation $\sG = {\rm Coker}(\sF \xrightarrow{\sigma} \sE)$, denote $q_{i\,*}$ the corresponding projection functor for the projective bundle $q\colon \PP(\sE) \to X$ defined by (\ref{eqn:lem:proj}), denote $\iota \colon \PP(\sG) \hookrightarrow \PP(\sE)$ the natural inclusion, then the following holds:
		$$\pi_{i\,*}(\blank)  = \sum_{j=0}^{r-1-i} (-1)^js_j(\sF) \cdot q_{m+i+j \, *}(\iota_* (\blank)).$$
\end{enumerate}
\end{lemma}
If we consider the following subgroup
	$$\CH_{k}(\PP(\sG))_{\rm tor.} : =\{ \beta \mid \pi_{i\,*} \beta =0, i \in [0,r-1] \} \subset \CH_{k}(\PP(\sG)).$$ 
Then the lemma implies that there is a decomposition
	$$\CH_{k}(\PP(\sG)) =\left ( \bigoplus_{i=0}^{r-1} \pi_i^* \CH_{k-{r-1}+i}(X) \right)\oplus  \CH_{k}(\PP(\sG))_{\rm tor.}.$$

\begin{proof} For simplicity, we may assume $\sG = {\rm Coker}(\sF \xrightarrow{\sigma} \sE)$, then $c(\sG) = c(\sE) / c(\sF)$ and $\iota \colon \PP(\sG) \hookrightarrow \PP(\sE)$ is given by a regular section of the vector bundle $\sF^\vee \otimes \sO_{\PP(\sE)}(1)$. The for any $a \in [0,r-1]$,
	\begin{align*}
	& \pi_{a\,*} (\sum_{i=0}^{r-1} \pi_i^* \,\alpha_i) = \pi_{a\,*} (\sum_{i=0}^{r-1} \zeta^i \cdot \iota^*q^* \alpha_i) \\
	& =\sum_{j=0}^{r-1-a} (-1)^j c_{j}(\sG) \cap q_* \iota_* (\zeta^{r-1-a-j} \cdot \sum_{i=0}^{r-1} \zeta^i \cdot \iota^*q^* \alpha_i ) \\
	& = \sum_{j=0}^{r-1-a} (-1)^j c_{j}(\sG) \cap q_* (\sum_{i=0}^{r-1}  c_m(\sF^\vee(1)) \zeta^{r-1-a-j+i}c\cdot q^* \alpha_i ) \\
	& = \sum_{j=0}^{r-1-a}\sum_{i=0}^{r-1} \sum_{\nu=0}^m (-1)^{j+\nu} c_{j}(\sG)c_{\nu}(\sF) \cdot q_* ( \zeta^{n-1-a-j+i-\nu} \cdot q^* \alpha_i ).
	\end{align*}
Now set $\mu: = \nu+ j$, notice that above terms which survives under $q_*$ have the indices range $0 \le \mu \le i-1$, $i=\mu+a \ge a$ and $0 \le \nu \le \mu$, therefore:
	\begin{align*}
	&  \pi_{a\,*} (\sum_{i=0}^{r-1} \pi_i^* \,\alpha_i)  = \sum_{i=a}^{r-1} \sum_{\mu=0}^{i-a} \sum_{\nu=0}^{\mu} (-1)^{\mu} c_{j}(\sG)c_{\nu}(\sF) \cdot q_* ( \zeta^{n-1+(i-a) -\mu} \cdot q^* \alpha_i )  \\
	& =\sum_{i=a}^{r-1} \sum_{\mu=0}^{i-a} (-1)^{\mu}c_{\mu}(\sE) \cdot q_* ( \zeta^{n-1+(i-a) -\mu} \cdot q^* \alpha_i ) \\
	& = \sum_{i=a}^{r-1} \sum_{\mu=0}^{i-a} (-1)^{\mu}c_{\mu}(\sE) (-1)^{i-a-\mu}s_{i-a-\mu}(\sE) \cap \alpha_i = \alpha_a.
	\end{align*}
Hence \eqref{lem:pi_proj-1} follows. In general, it suffices to notice that the maps $\pi_{i\,*} \,\pi_i^*$ and $ \pi_{i\,*} \,\pi_j^*$ are globally defined and their values do not depend on local presentations.

The statement \eqref{lem:pi_proj-2} follows directly from expressing $\iota_* (\sum_{i=0}^{r-1} \zeta^i \cdot \iota^*q^* \alpha_i ) = c_{m}(\sF^\vee(1)) \cap (\sum_{i=0}^{r-1} \zeta^i \cdot q^* \alpha_i )$ in terms of the basis $\{\zeta^i\}$ of $\CH(\PP(\sE))$. Notice that one can also show \eqref{lem:pi_proj-2} first, and then \eqref{lem:pi_proj-1} follows easily from \eqref{lem:pi_proj-2}.
\end{proof}

For simplicity of notations, from now on we denote:
	$$\sK : = \sExt^1(\sG,\sO_X), \qquad \pi': \PP(\sK) = \PP( \sExt^1(\sG,\sO_X)) \to X.$$
Therefore we have a fibered diagram:
	\begin{equation}\label{diagram:Gamma}
		\begin{tikzcd}[row sep= 2.6 em, column sep = 2.6 em]
	\Gamma: =\PP(\sG) \times_X \PP(\sK)  \ar{d}[swap]{r_-} \ar{r}{r_+} & \PP(\sG) \ar{d}{\pi} 
	\\
	\PP(\sK) \ar{r}{\pi'}         &X
		\end{tikzcd}	
		\end{equation}	 		
\begin{lemma} \label{lem:proj.general} Assume $\PP(\sG)$, $\PP(\sK)$ and $\Gamma$ have expected dimensions (see Remark \ref{remark:exp:dim}  \ref{remark:exp:dim-1}).
	\begin{enumerate}
		\item  \label{lem:proj.general-1}
		The sheaf $\pi'^*\sG$ has homological dimension $\le 1$ and  rank $r+1$ over $\PP(\sK)$, and 
			$$\Gamma = \PP_{\PP(\sK)}(\pi'^*\sG) = \PP_{\PP(\sG)}(\pi^* \sK).$$
		\item  \label{lem:proj.general-2}
		 The excess bundle for the diagram (\ref{diagram:Gamma}) is $\sO(1,1) := \sO_{\PP(\sG)}(1)\otimes \sO_{\PP(\sK)}(1)$. Hence 
			$$\pi^* \pi'_* (\blank) = r_{+\,*} (c_1(\sO(1,1)) \cap r_-^* (\blank)).$$
		\item  \label{lem:proj.general-3}
		The following holds:
			$$\pi_{i\,*} \pi_j^{*} = \delta_{i,j} \Id_{\CH(X)}, \quad \Gamma_* \pi_i^{*} =   \pi_{i\,*} \Gamma^* = 0, \quad \text{for all $i,j \in [0,r-1]$}.$$
	\end{enumerate}
\end{lemma}
	\begin{proof} It suffices to prove in a local situation, i.e. $0 \to \sF \xrightarrow{\sigma} \sE \to \sG \to 0$ for vector bundles $\sF$ and $\sE$ of rank $m$ and $n$. Dually we have 
		$\sE^\vee \xrightarrow{\sigma^\vee} \sF^\vee \to \sK \to 0.$

For \eqref{lem:proj.general-1}, notice that over $\PP(\sK) \subset \PP(\sF^\vee)$, the composition of the map $\sO_{\PP(\sF^\vee)}(-1) \to \pi'^* \sF \xrightarrow{\sigma} \pi'^* \sE$ is zero, hence $\sigma$ factorises through a map of vector bundles $\overline{\sigma} \colon \shT_{\PP(\sF^\vee)/X}(-1)  \to  \pi'^* \sE$. For the reason of ranks, it easy to see the following sequence is exact:
		$$0 \to \shT_{\PP(\sF^\vee)/X}(-1) \xrightarrow{\overline{\sigma}}  \pi'^* \sE \to \pi'^*\sG \to 0.$$
Therefore $\pi'^*\sG$ has homological dimension $\le 1$, and $\PP(\pi'^*\sG) \subset \PP(\pi'^* \sE) = \PP(\sK) \times_X \PP(\sE)$ is the zero scheme of a regular section of the vector bundle $\Omega_{\PP(\sF^\vee)}(1)\boxtimes \sO_{\PP(\sE)}(1)$. The last equality follows directly from commutativity of projectivization and fiber products.

	For \eqref{lem:proj.general-2}, consider the following factorisation of the (transpose of) the diagram (\ref{diagram:Gamma}):
			\begin{equation*}
		\begin{tikzcd}[row sep= 2.6 em, column sep = 2.6 em]
	\Gamma   \ar[hook]{r}[swap]{\iota'} \ar{d}{r_+}  \ar[bend left = 15]{rr}{r_-} & \PP(\sK) \times_X \PP(\sE) = \PP_{\PP(\sK)}(\sE)  \ar{d}{\pi' \times \Id} \ar{r}[swap]{q'} & \PP(\sK) \ar{d}{\pi'} \\ 
	 \PP(\sG) \ar[hook]{r}{\iota}   \ar[bend right = 15]{rr}[swap]{\pi}  & \PP(\sE) \equiv \PP_X(\sE) \ar{r}{q}    &X.
		\end{tikzcd}	
		\end{equation*}	 
(Here for simplicity we use $q$ to denote both projections of projectivization of $\sE$.) The normal bundles are $\sN_{\iota} = \sF^\vee \boxtimes \sO_{\PP(\sE)}(1)$, and $\sN_{\iota'} =\Omega_{\PP(\sF^\vee)}(1) \boxtimes \sO_{\PP(\sE)}(1)$. Since the right square of the diagram is a smooth and flat, the excess bundle is given by $r_+^* \sN_{\iota} / \sN_{\iota} = \sO(1,1)$.

For \eqref{lem:proj.general-3}, the first equality is Lemma \ref{lem:pi_proj}. For any $\gamma \in \CH(\PP(\sK))$, for $i \in [0,r-1]$, 
	\begin{align*}
		\pi_{i\,*} \Gamma^*\, \gamma &= \sum_{j=0}^{r-1-i} s_j(\sF^\vee) \cdot q_{m+i+j \, *}(\iota_* r_{+\,*} r_{-}^* \gamma ) \\
		&=\sum_{j=0}^{r-1-i} s_j(\sF^\vee) \cdot  q_{m+i+j \, *}((\pi'\times\Id)_* \iota'_* \iota'^*(\gamma \boxtimes 1))\\
		&= \sum_{j=0}^{r-1-i} s_j(\sF^\vee) \cdot \pi'_* \, q_{m+i+j \, *}(\iota'_* \iota'^*\,q^* \gamma )   \\
		&= \sum_{j=0}^{r-1-i} s_j(\sF^\vee) \cdot  \pi'_* \, q_{m+i+j \, *}(c_{m-1}(\Omega_{\PP(\sF^\vee)}(1)\boxtimes \sO_{\PP(\sE)}(1)) \cap \,q^* \gamma ) \\
		&= \sum_{j=0}^{r-1-i} s_j(\sF^\vee) \cdot  \pi'_* \, q_{m+i+j \, *}((\zeta^{m-1} + \text{lower order terms}) \cdot \,q^* \gamma ) \\
		&=0.
	\end{align*}
(The last equality holds since $q_{m+i+j \, *}$ has index range $m+i+j \ge m$.)	Similarly for any $\alpha \in \CH(X)$ and $i \in [0,r-1]$,
	\begin{align*}
	&\Gamma_* \pi_i^* \alpha = r_{-*} r_+^* (\zeta^i \cdot \pi^* \alpha) = r_{-*} \iota'^* (\zeta^ i \cdot (\pi' \times \Id)^* q^* \alpha ) = q_*\,\iota'_* \, \iota'^*(\zeta^i \cdot (\pi'^* \alpha \boxtimes 1)) \\
	&= q_* ((\zeta^{m-1+i} + \text{lower order terms}) \cdot q^* \, \pi'^* \alpha ) = 0,
	\end{align*}
since $q$ is the projection of a $\PP^{n-1}$-bundle, and $m-1+i \le m+r-2 \le n-2$.
\end{proof}

\subsection{First approach} In this approach, we use Chow's moving lemma, hence need $\PP(\sG)$ and $\PP(\sK)$ to be nonsingular and quasi-projective. The idea is: over the open part of the first degeneracy locus, the theorem is almost the case of Cayley's trick. Then the ``error" terms over higher degeneracy loci can be estimated by dimension counting. A similar strategy was used by Fu--Wang to show invariance of Chow groups under stratified Mukai flops \cite{FW08}. 

We first need the following variant of Cayley's trick:
	\begin{lemma}[Variant of Cayley's trick]\label{lem:variant:Cayley} Assume $\sG$ is a coherent sheaf of homological dimension $1$ over a variety $X$, and let $i\colon Z \hookrightarrow X$ be a locally complete intersection subscheme of codimension $r+1$, such that $\sG$ has constant rank $r$ over $X \backslash Z$, and constant rank $r+1$ over $Z$. Denote $\Gamma : =  \PP_Z(i^* \sG) = \PP(\sG) \times_X Z$, and denote $\Gamma_* \colon \CH(\PP(\sG)) \to \CH(Z)$ and $\Gamma^* \colon \CH(Z) \to \CH(\PP(\sG))$ the maps induced by $[\Gamma]$ and $[\Gamma]^t$. Then the following holds:
		\begin{align*}\Gamma_* \, \Gamma^* = (-1)^r \Id_{\CH(Z)}, \quad \pi_{i\,*} \pi_j^{*} = \delta_{i,j} \Id_{\CH(X)}, \quad \Gamma_* \pi_i^{*} =   \pi_{i\,*} \Gamma^* = 0,
		\end{align*}
for any $i,j \in [0,r-1]$. Furthermore, the following decomposition of identity holds:
 	$$\Id_{\CH(\PP(\sG))} =  \Gamma^*\,\Gamma_* + \sum_{i=0}^{r-1} \pi_i^* \pi_{i\,*}.$$
	\end{lemma}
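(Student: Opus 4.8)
The plan is to exhibit this situation as a genuine instance of Cayley's trick (Thm.~\ref{thm:Cayley}) applied to a suitable locally free sheaf, so that all the identities follow from the corresponding identities there. Since $\sG$ has homological dimension $\le 1$ and the degeneracy structure is as simple as possible, I would first argue that $\sG$ globally admits a presentation $0 \to \sF \xrightarrow{\sigma} \sE \to \sG \to 0$ with $\sE$ of rank $n$ and $\sF$ of rank $m = n-r$ after possibly replacing $\sE$ by $\sE \oplus \sO_X^{\oplus N}$ (alternatively, work locally and invoke the fact that all the maps in sight are globally defined, as was done in Lem.~\ref{lem:pi_proj}). With such a presentation, $\iota \colon \PP(\sG) \hookrightarrow \PP(\sE)$ is the zero scheme of a regular section of $\sF^\vee \otimes \sO_{\PP(\sE)}(1)$, and $\shH_s = \PP(\sG)$ in the notation of \S\ref{sec:Cayley} with $Z(s)$ exactly the locus where $\sigma$ drops rank, i.e. where $\sG$ jumps to rank $r+1$ — and by hypothesis this locus is precisely the given $Z$, a locally complete intersection of codimension $r+1$. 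Then $\PP(\sN_i)|_Z = \PP_Z(\sE|_Z) $ contains $\PP_Z(i^*\sG) = \Gamma$ as the linear subbundle corresponding to the surjection $\sE|_Z \to \sG|_Z$; in fact, since over $Z$ the map $\sigma$ vanishes, the normal bundle $\sN_i = \sE|_Z$ has $\PP(\sN_i) \ne \Gamma$ in general (the rank bookkeeping: $\sN_i$ has rank $n$, $i^*\sG$ has rank $r+1$), so a small additional argument is needed to pass between $\PP(\sN_i)$ and $\Gamma$.

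The cleaner route, which I would actually pursue, is to apply Cayley's trick not on $X$ but to build $\PP(\sG)$ directly. Observe that the hypotheses say exactly that $i^*\sG$ is a rank-$(r+1)$ sheaf on $Z$ which (since $\codim(Z \subset X) = r+1$ and $\sG$ has hom.\ dim.\ $\le 1$) is in fact locally free on $Z$: a rank-$r$ sheaf of hom.\ dim.\ $\le 1$ restricted to the expected-codimension first degeneracy locus is locally free there. Hence $\Gamma = \PP_Z(i^*\sG)$ is an honest $\PP^r$-bundle over $Z$, and the projective bundle formula (Thm.~\ref{thm:proj.bundle}) applies to it; in particular $p_* \colon CH(\Gamma) \to CH(Z)$ together with powers of $\zeta' = c_1(\sO_\Gamma(1))$ give the usual decomposition. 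Now the map $j_{\Gamma} \colon \Gamma \hookrightarrow \PP(\sG)$ is a closed immersion (it is the base change of $i$), and by the same computation as in Step (a) of the proof of Thm.~\ref{thm:Cayley} — using the Euler sequence $0 \to \Omega_{\PP(\sE)}(1)|_{\Gamma} \to p^*(i^*\sG) \to \sO_\Gamma(1) \to 0$ restricted appropriately — one identifies the relevant excess bundle and gets $\pi^* \, i_* = (j_\Gamma)_*(\zeta' \cdot p^*(\blank))$, whence $\Gamma^* = (j_\Gamma)_* \, p^*$ and $\Gamma_* = p_* \, j_\Gamma^*$ exactly as stated. The identities $\pi_{i\,*}\pi_j^* = \delta_{ij}\Id$ are already Lem.~\ref{lem:pi_proj}(1); the identities $\Gamma_*\pi_i^* = \pi_{i\,*}\Gamma^* = 0$ follow by flat base change along the Cartesian square $\Gamma \to \PP(\sG) \to X \leftarrow Z$ together with the index-range vanishing for the relevant pushforward, exactly as in Lem.~\ref{lem:proj.general}(3); and $\Gamma_*\Gamma^* = (-1)^r\Id_{CH(Z)}$ follows from $\Gamma_*\Gamma^* = p_* \, j_\Gamma^* (j_\Gamma)_* \, p^* = p_*(c_r(\sN_{\Gamma/\PP(\sG)}) \cap p^*(\blank))$ combined with the identification of $\sN_{\Gamma/\PP(\sG)}$ (from the excess sequence, it is $\Omega_{\PP(i^*\sG)/Z}(1)$-type up to twist) and Lem.~\ref{lem:chern}, giving top Chern class $(-1)^r\zeta'^r + (\text{lower order})$, so that $p_*$ of it is $(-1)^r$.

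The remaining claim — that $\Im(\Gamma^*) = CH(\PP(\sG))_{\rm tor.}$ and the splitting $CH(\PP(\sG)) = \Im(\Gamma^*) \oplus CH(\PP(\sG))_{\rm l.f.}$ — is where I expect the main work to lie, and it is essentially a repackaging of the surjectivity argument (Step (d)) of Thm.~\ref{thm:Cayley}. One inclusion is immediate: $\pi_{i\,*}\Gamma^* = 0$ forces $\Im(\Gamma^*) \subseteq CH(\PP(\sG))_{\rm tor.}$. For the reverse, take $\beta$ with $\pi_{i\,*}\beta = 0$ for all $i$; restricting to the open $U = X \setminus Z$, where $\PP(\sG)|_U$ is a genuine $\PP^{r-1}$-bundle, the vanishing of all $\pi_{i\,*}\beta$ forces $\beta|_U = 0$ by Thm.~\ref{thm:proj.bundle}(2), hence by the excision sequence $CH(\Gamma) \xrightarrow{(j_\Gamma)_*} CH(\PP(\sG)) \to CH(\PP(\sG)|_U) \to 0$ we may write $\beta = (j_\Gamma)_* \varepsilon$ for some $\varepsilon \in CH(\Gamma)$; then adjusting $\varepsilon$ by $p^*$ of its lower $p_{i\,*}$-components (as in Step (d)(ii) of Thm.~\ref{thm:Cayley}, using $\Gamma_*\Gamma^* = \pm\Id$ which is an isomorphism on $CH(Z)$) one reduces to $\varepsilon = p^*\gamma$, i.e. $\beta = (j_\Gamma)_* p^* \gamma = \Gamma^*\gamma \in \Im(\Gamma^*)$. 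Finally the direct-sum decomposition is just Lem.~\ref{lem:pi_proj}(3) rephrased: $CH(\PP(\sG)) = CH(\PP(\sG))_{\rm l.f.} \oplus CH(\PP(\sG))_{\rm tor.} = CH(\PP(\sG))_{\rm l.f.} \oplus \Im(\Gamma^*)$. The one genuine subtlety to watch is the local-versus-global nature of the presentation $0 \to \sF \to \sE \to \sG \to 0$: it need not exist globally, so I would phrase the excess-bundle and Euler-sequence computations locally and note (as in Lem.~\ref{lem:pi_proj}) that the resulting operators $\Gamma^*, \Gamma_*, \pi_{i\,*}$ are globally defined and agree on overlaps, or alternatively carry out the argument entirely with the intrinsically-defined $\Gamma = \PP_Z(i^*\sG)$ and the closed immersion $j_\Gamma$, never needing $\sE$ at all.
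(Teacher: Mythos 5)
Your first paragraph contains a genuine error: over $Z$ the map $\sigma \colon \sF \to \sE$ does \emph{not} vanish, it merely acquires a one-dimensional kernel $L$. Consequently $\sN_{Z/X}$ is not $\sE|_Z$; rather $\sN_{Z/X} \simeq L^\vee \otimes i^*\sG$ (equivalently $i^*\sG \otimes K_Z$ with $K_Z := i^*\sExt^1(\sG,\sO_X)$ a line bundle), as the paper computes via the four-term exact sequence $0 \to L \to \sF|_Z \to \sE|_Z \to i^*\sG \to 0$. This is precisely why your attempt to slot the situation directly into the $\shH_s \subset \PP(\sE)$ picture of \S\ref{sec:Cayley} does not go through: that picture requires $\sF$ of rank one, and $\PP(\sG)\subset\PP(\sE)$ has codimension $m = \rank\sF$, not $1$.

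Your ``cleaner route'' is the right direction, but it has an oversight that propagates: the excess bundle for the square $\Gamma \to \PP(\sG) \to X \leftarrow Z$ is $\sO_\Gamma(1) \otimes p^*K_Z$, not $\sO_\Gamma(1)$. (From $\sN_{Z/X} = i^*\sG \otimes K_Z$ and $\sN_{\Gamma/\PP(\sG)} = \Omega_{\Gamma/Z}(1)\otimes p^*K_Z$, the excess quotient is the $K_Z$-twist of the Euler cokernel.) So the key formula is $\pi^* i_* (\blank) = j_{\Gamma\,*}\bigl((\zeta' + p^*c_1(K_Z)) \cdot p^*(\blank)\bigr)$, not $j_{\Gamma\,*}(\zeta'\cdot p^*(\blank))$. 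Your Euler sequence as written is also off: the left term should be $\Omega_{\Gamma/Z}(1)$, not $\Omega_{\PP(\sE)}(1)|_\Gamma$. For the identities $\Gamma_*\Gamma^* = (-1)^r\Id$ and Step (c) the twist only contributes lower-order terms in $\zeta'$ and the argument survives, but in the Step (d) adjustment where you replace $\varepsilon$ by $\varepsilon - \sum p_{i+1}^*p_{i+1\,*}\varepsilon$ and push through $j_{\Gamma\,*}$, the key formula is used on the nose, so you would need either to carry the $c_1(K_Z)$ correction through that computation or to change basis on $CH(\Gamma)$ from $\{\zeta'^i\}$ to $\{(\zeta'+p^*c_1(K_Z))^i\}$ (invoking the remark after Lem.~\ref{lem:proj}).

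The paper's proof sidesteps this twist altogether: it passes to the formal neighbourhood $\widehat{X}$ of $Z$, extends $L$ and $V = i^*\sG$ there, and then replaces $\widehat{\sG}$ by $\widehat{\sG}\otimes L^\vee$ so that the embedding $\PP(\widehat{\sG}\otimes L^\vee) \hookrightarrow \PP(V\otimes L^\vee)$ is cut out by a section of $\sO(1)$ — i.e.\ it is a literal instance of Cayley's trick Thm.~\ref{thm:Cayley} — and the fact that the twist changes $\pi_{i\,*}$ only by an invertible change of basis is absorbed by Lem.~\ref{lem:pi_proj}(3). Your intrinsic route (never forming $\sE$, working with $j_\Gamma$ directly) is a legitimate alternative and arguably more conceptual, but it requires redoing Steps (c)--(d) with the corrected excess bundle rather than simply citing them; as written the proposal has a gap there, and the first paragraph should be discarded or corrected.
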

\begin{proof} It suffices to notice that the argument of Theorem \ref{thm:Cayley} for these statements only depends on the properties of the normal bundles, hence still works here. More precisely, we may assume $\sG = {\rm Coker}(\sF \xrightarrow{\sigma} \sE)$ for simplicity, then over $Z$ there exits a line bundle $L$ such that there is an exact sequence of vector bundles:
		$$0 \to L \to \sF|_Z \to \sE|_Z \to i^* \sG \to 0.$$
	Also we have a similar picture as  Cayley's trick (\ref{diagram:Cayley}):
	\begin{equation*}
	\begin{tikzcd}[row sep= 0.5 em, column sep = 3 em]
	& \PP(\sE|_Z) \ar[hook]{rr}   \ar{lddd} & & \PP(\sE) \ar{lddd} \\ 
	\Gamma = \PP(i^* \sG)   \ar{dd}[swap]{p} \ar[hook]{ru} \ar[hook, crossing over]{rr}{j} & & \PP(\sG) \ar{dd}{\pi} \ar[hook]{ru}{\iota}	\\ 
	 ~ & ~ & ~ \\
	Z \ar[hook]{rr}{i}         & & X
	\end{tikzcd}
	\end{equation*}
	%
	%
Denote $G_Z: = i^* \sG$, which is a vector bundle on $Z$. Then it is easy to compute the normal bundles are $\sN_i = L^\vee \otimes G_Z$, $\sN_j = L^\vee \otimes \Omega_{\PP(G_Z)/Z}^1(1)$, and excess bundle for the left square is $\sV = p^* N_i/N_j \simeq L^\vee \otimes \sO_{\PP(G_Z)}(1)$ (see Lemma \ref{lem:stratum} for the more general situation). Therefore $\Gamma_* \Gamma^* = p_* (c_{r}(L^\vee \otimes \Omega_{\PP(G_Z)/Z}^1(1)) \cap p^*(\blank)) = p_*((-1)^r\zeta^{r}  + l.o.t.) \cap  p^*(\blank)) = (-1)^r \Id$, and the rest of the orthogonal relations follow from Lemma \ref{lem:proj.general}. Finally, for the last identity, it suffices to show the subjectivity of $\Gamma^* + \sum \pi_i^*$. For any $\beta \in \CH(\PP(\sG)$, $\beta' = \beta - \sum \pi_i^* \pi_{i\,*} \beta$ is supported on $\PP_Z(G_Z)$, hence can always be expressed in the form $\beta' = j_*(p^* \beta_0 + (\zeta-c_1(L)) \sum_{i=0}^{r-1} \zeta^{i} \cap p^* \beta_{i+1})$. Therefore $\beta' = \Gamma^* \beta_0 +  \sum_{i=0}^{r-1} \pi_i^* i_*  \beta_{i+1}'$, and hence we are done.
\end{proof}

\begin{remark} If we modify the map $f$ in Theorem \ref{thm:Cayley} by $f: \oplus_{i=0}^{r-2}  \gamma_i \mapsto (- \oplus_{i=0}^{r-2} i_*  \gamma_i , ~ (\zeta-c_1(L)) \sum_{i=0}^{r-2} \zeta^{i} \cdot p^* \gamma_i)$, then the sequence of Theorem \ref{thm:Cayley} is still exact. In fact, if we denote $p_{i*}$ the projectors with respect to $\sO_{\PP(\sE)}(1)$, then $p_i^*\circ i^* = j^* \circ \pi_i^*$ holds. Although for $i \in [0,r-1]$, $i_* \circ p_{i+1 \,*}$ and $\pi_{i*} \circ j_*$ are no longer the same, due to the additional factor $c_1(L)$, but they differ by an invertible upper triangular change of basis as Remark. \ref{rmk:basis}. Hence in Proposition \ref{prop:caley:key-lemma}, except from the key formula (a) now becomes $\pi_i^* \circ i_* = j_* ((\zeta - c_1(L) \cap p^*(\blank))$, the  rest still holds. Similar for other the statements.
\end{remark}

Denote $X_i:= X^{\ge r+i+1}(\sG)$ for $i \ge -1$, then there is a stratification $\ldots \subset X_{i+1} \subset X_{i} \subset \ldots \subset X_{1} \subset X_{0} \subset X_{-1} = X$. This induces the corresponding stratifications $\PP(\sG)_i:  = \pi^{-1} X_i$, $\PP(\sK)_i:  = \pi'^{-1} X_i$, and $\Gamma_i : = r_{+}^{-1} \pi^{-1} X_i =  r_{-}^{-1} \pi'^{-1} X_i $. Notice that $\PP(\sG)_{-1} = \PP(\sG)$, but 
	$$\ldots \subset \PP(\sK)_{1} \subset \PP(\sK)_{0} = \PP(\sK)_{-1} = \PP(\sK) \quad \text{and} \quad \ldots \subset \Gamma_{1} \subset \Gamma_{0} =  \Gamma_{-1} = \Gamma,$$  
since $\PP(\sK)$ is supported on $X_0$. Over each stratum $X_{i} \backslash X_{i+1}$, $i\ge0$, diagram (\ref{diagram:Gamma}) is:
\begin{equation*}
		\begin{tikzcd}[row sep= 2.6 em, column sep = 2.6 em]
	\Gamma_{i} \backslash \Gamma_{i+1}  \ar{d}[swap]{\PP^{r+i}\text{-bundle}} \ar{r}{\PP^i\text{-bundle}} & \PP(\sG)_{i} \backslash\PP(\sG)_{i+1}   \ar{d}{\PP^{r+i}\text{-bundle}} 
	\\
	\PP(\sK)_{i} \backslash \PP(\sK)_{i+1}   \ar{r}{\PP^i\text{-bundle}}         &X_{i} \backslash X_{i+1} 
		\end{tikzcd}		
\end{equation*}
The codimension condition (\ref{eqn:weak.dim}) translates into $\dim X_0 = \dim X - (r+1)$, and ${\rm codim}(X_i \subset X_0) \ge 2i+1$. From above diagram, this implies that: for any $i \ge 1$,
	$${\rm codim}(\PP(\sK)_i \subset \PP(\sK)) \ge i+1 \quad \text{and} \quad {\rm codim}(\PP(\sG)_i \subset \PP(\sG)) \ge r+i+1.$$

\begin{lemma} \label{lem:injectivity} If $\PP(\sK) = \PP( \sExt^1(\sG,\sO_X))$ is nonsingular and quasi-projective, and the dimension condition (\ref{eqn:weak.dim}) of (A) holds, then the following holds:
	$$\Gamma_* \, \Gamma^* = (-1)^r \Id_{\PP(\sK)}.$$
\end{lemma}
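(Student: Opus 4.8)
The argument will follow the same template as the proof of Theorem~\ref{thm:flip}(1): move the cycle into good position with respect to the degeneracy‑locus stratification, identify $\Gamma^{*}[W]$ with an explicit strict transform, compute $\Gamma_{*}$ of it, and then eliminate the correction cycles lying over the deeper strata by a dimension count --- this last step being exactly where the weak codimension condition \eqref{eqn:weak.dim} is used, in the spirit of Fu--Wang \cite{FW08}.

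First recall the notation $X_{i}=X^{\ge r+i+1}(\sG)$, $\PP(\sK)_{i}=\pi'^{-1}(X_{i})$, $\PP(\sG)_{i}=\pi^{-1}(X_{i})$, $\Gamma_{i}=r_{+}^{-1}\PP(\sG)_{i}=r_{-}^{-1}\PP(\sK)_{i}$: condition \eqref{eqn:weak.dim} yields $\codim(\PP(\sK)_{i}\subset\PP(\sK))\ge i+1$ and $\codim(\PP(\sG)_{i}\subset\PP(\sG))\ge r+i+1$ for $i\ge 1$, while over the open stratum $X\setminus X_{1}$ one has $\PP(\sK)\setminus\PP(\sK)_{1}\cong X_{0}\setminus X_{1}$ and $\Gamma\setminus\Gamma_{1}\cong\PP(\sG)_{0}\setminus\PP(\sG)_{1}$, with $r_{-}$ restricting to the $\PP^{r}$-bundle $\PP(\sG)_{0}\setminus\PP(\sG)_{1}\to X_{0}\setminus X_{1}$ and $r_{+}$ restricting to the open immersion into $\PP(\sG)$. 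Thus over $X\setminus X_{1}$ the data is precisely that of Lemma~\ref{lem:variant:Cayley} with $Z=X_{0}\setminus X_{1}$, so $\Gamma_{*}\Gamma^{*}$ already equals $(-1)^{r}\Id$ on $CH(\PP(\sK)\setminus\PP(\sK)_{1})$; what remains is to rule out a correction supported on $\PP(\sK)_{1}$.

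Now fix $[W]\in CH_{k}(\PP(\sK))$ and, using Chow's moving lemma on the nonsingular quasi-projective $\PP(\sK)$, represent it by a cycle with $\dim(W\cap\PP(\sK)_{i})\le k-i-1$ for all $i\ge 1$. Since $r_{-}$ has fibre dimension $r+i$ over $\PP(\sK)_{i}\setminus\PP(\sK)_{i+1}$, the part of $r_{-}^{-1}(W)$ over $\PP(\sK)_{i}$ ($i\ge 1$) has dimension $\le(k-i-1)+(r+i)=k+r-1$, so $r_{-}^{*}[W]=[\widetilde W]$ where $\widetilde W$ is the closure of $r_{-}^{-1}(W\setminus\PP(\sK)_{1})$; likewise $r_{+}$ has fibre dimension $i$ over $\PP(\sG)_{i}\setminus\PP(\sG)_{i+1}$, whence $r_{+*}[\widetilde W]=[V]$ with $V:=\overline{r_{+}(\widetilde W\setminus\Gamma_{1})}\subset\PP(\sG)_{0}$ and $\dim(V\cap\PP(\sG)_{i})\le k+r-1-i$; thus $\Gamma^{*}[W]=[V]$, with no exceptional corrections (these are standard dimension counts using the regular-embedding factorizations of $r_{\pm}$ from the proof of Lemma~\ref{lem:proj.general}, exactly as \cite[Cor.~6.7.2]{Ful} is used in the flip case). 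Next compute $\Gamma_{*}[V]=r_{-*}r_{+}^{*}[V]$: since $V^{\circ}:=V\setminus\PP(\sG)_{1}$ lies in the smooth open subscheme $\PP(\sG)_{0}\setminus\PP(\sG)_{1}$, a locally complete intersection of codimension $r$ in $\PP(\sG)$, excess intersection gives $r_{+}^{*}[V]=c_{r}\!\big(r_{+}^{*}\sN_{(\PP(\sG)_{0}\setminus\PP(\sG)_{1})/\PP(\sG)}\big)\cap[\widetilde W]+(\text{a class supported on }r_{+}^{-1}(V\cap\PP(\sG)_{1}))$, and by the relative Euler sequence of the $\PP^{r}$-bundle $\PP(\sG)_{0}\setminus\PP(\sG)_{1}\to X_{0}\setminus X_{1}$ (cf.\ \eqref{eqn:Euler} and Lemma~\ref{lem:chern}) one has $c_{r}(\sN_{(\PP(\sG)_{0}\setminus\PP(\sG)_{1})/\PP(\sG)})=(-1)^{r}\zeta^{r}+(\text{lower order in }\zeta)$ with $\zeta=c_{1}(\sO_{\PP(\sG)}(1))$. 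Applying $r_{-*}$: on $\widetilde W\setminus\Gamma_{1}$ the map $r_{-}$ is a $\PP^{r}$-bundle over $W\setminus\PP(\sK)_{1}$ with relative hyperplane class $\zeta|_{\Gamma}$, so Theorem~\ref{thm:proj.bundle} gives $r_{-*}(\zeta^{r}\cap[\widetilde W])=[W]$ and $r_{-*}(\zeta^{j}\cap[\widetilde W])=0$ for $j<r$, up to classes supported on $r_{-}(\widetilde W\cap\Gamma_{i})\subseteq\PP(\sK)_{i}$ of dimension $\le(k+r-1)-(r+i)<k$, which vanish; and the part of $r_{+}^{*}[V]$ over $\PP(\sG)_{1}$ pushes forward into $\pi'^{-1}(\pi(V)\cap X_{1})\subseteq\PP(\sK)_{1}$ of dimension $\le(k-2)+1<k$, hence contributes $0$. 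Therefore $\Gamma_{*}\Gamma^{*}[W]=(-1)^{r}[W]$.

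The essential difficulty is the final dimension bookkeeping: one must verify that \emph{every} cycle that appears over the strata $\PP(\sK)_{i}$, $i\ge 1$ --- both the lower-order-in-$\zeta$ terms and the part of $r_{+}^{*}[V]$ over $\PP(\sG)_{1}$ --- is contracted by $r_{-}$ to dimension $<k$, which is precisely the content of the codimension bound in \eqref{eqn:weak.dim}; this is the exact counterpart of the step in Theorem~\ref{thm:flip}(1) where the exceptional cycles $[E_{B}]$ are annihilated by $p'_{*}$ because $n\ge m$. A lighter point to pin down is that the refined pullbacks $r_{-}^{*}$ and $r_{+}^{*}$ genuinely reduce, for a cycle in good position, to the stated cycle-level strict transforms and excess intersections, which follows from the factorizations of $r_{\pm}$ through regular embeddings and smooth projective bundles set up in the proof of Lemma~\ref{lem:proj.general}, together with the dimension estimates above.
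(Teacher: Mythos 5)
Your proposal follows essentially the same strategy as the paper's proof: Chow's moving lemma, identification of the open stratum with the variant-Cayley setup of Lemma~\ref{lem:variant:Cayley}, and a dimension count over the deeper strata $\PP(\sK)_{i}$, $i\ge 1$, to kill the error cycles. The main difference is one of bookkeeping rather than substance. The paper works directly with the composite correspondence cycle $r_{+}^{*}r_{+*}r_{-}^{*}[W]$ inside $\Gamma$, decomposes it as $[\widetilde W] + \sum a_C[F_C]$ with both $\widetilde W$ and $F_C$ already $k$-dimensional, and then disposes of each $F_C$ by showing a generic fiber $F_{C,s}\subset\PP^i\times\PP^{r+i}$ has dimension $\ge i+1$ and therefore contains positive-dimensional fibers of $r_-$, so $r_{-*}[F_C]=0$; this mirrors the flip argument of Theorem~\ref{thm:flip}(1) word for word. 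You instead identify each of $r_-^*[W]$, $r_{+*}$, $r_{+}^{*}[V]$, $r_{-*}$ separately with explicit strict transforms and an excess-intersection term $c_r(\sN)\cap[\widetilde W]$. This works, but a few of your intermediate bounds are stated more sharply than they need to be and are not clearly justified: the claim $\dim(V\cap\PP(\sG)_i)\le k+r-1-i$ doesn't follow from what you've set up (the transversality of $W$ only gives $\dim(\pi(V)\cap X_i)\le k-i-1$, hence $\dim(V\cap\PP(\sG)_i)\le k+r-1$ after adding the $\PP^{r+i}$-fibre), and your $(k-2)+1$ bound for $\pi'^{-1}(\pi(V)\cap X_1)$ only accounts for the $i=1$ stratum; the uniform statement you actually need is $\max_{i\ge1}\big((k-i-1)+i\big)=k-1<k$, which does hold. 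Since these are the exact quantities where errors would propagate, it is worth pausing to record them correctly; otherwise the argument, like the paper's, rests soundly on the same weak-codimension condition (\ref{eqn:weak.dim}) and reaches $\Gamma_{*}\Gamma^{*}=(-1)^{r}\Id$ with the correct sign coming from $c_r$ of the excess normal bundle.
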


\begin{proof} The following arguments follow the strategy of Fu--Wang \cite{FW08} for stratified Mukai flops, which is itself a generalization of \cite{LLW}'s treatment for standard flops and flips (see also \S \ref{sec:flips}). For any class $[W] \in \CH_k(\PP(\sK))$, by Chow's moving lemma, we may assume $W$ intersects transversely with $\sum_{i \ge 1} \PP(\sK)_i$. 

First, notice that over the open subset $\mathring{X}:=X\backslash X_{1}$, $\mathring{\PP}(\sK) : = \PP(\sK)_{0} \backslash \PP(\sK)_{1} \simeq \mathring{Z}: = X_{0} \backslash X_{1} \xrightarrow{i} \mathring{X}$ is an inclusion of codimension $r+1$, and $\sG$ has constant rank $r$ over $\mathring{X} \backslash \mathring{Z}$, has constant rank $r+1$ over $\mathring{Z}$, and $\mathring{\Gamma} \simeq \PP(i^*\sG) \subset \mathring{\PP}(\sG) : = \PP(\sG)_{0} \backslash \PP(\sG)_{1}$. 
Therefore we are in the situation of variant of Cayley's trick Lemma \ref{lem:variant:Cayley}. Therefore by Lemma \ref{lem:variant:Cayley}, if we set $\mathring{W} = W \cap \mathring{X}$, then the cycle $r_+^* r_{+\,*} r_{-}^* [\mathring{W}]$ is represented by a $k$-cycle $\mathring{\widetilde{W}}$ which maps generically one to one to a $k$-cycle which is rationally equivalent to $\mathring{W}$, and that $r_*[\mathring{\widetilde{W}}] = \Gamma_* \Gamma^* [\mathring{W}] =(-1)^r [\mathring{W}]$.

Now back to the whole space, if we let $\widetilde{W}$ be the closure of $\mathring{\widetilde{W}}$ in $\Gamma$. Then
	$$r_+^* r_{+\,*} r_{-}^* [W] = [\widetilde{W}] + \sum_{C} a_C [F_{C}],$$ 
where $\widetilde{W}$ is the $k$-dimensional cycle as above, mapping generically one to one to a $k$-cycle that is rationally equivalent to $(-1)^rW$, $a_C \in \ZZ$, and $F_{C}$ are $k$-dimensional irreducible schemes supported over $\pi'(W \cap \sum_{i \ge 1} \PP(\sK)_i)$. (More precisely, let $C'$ be irreducible component of $\pi^{-1} {\pi'} (W \cap \sum_{i \ge 1} \PP(\sK)_i)$, then the fiber $F_C$ runs through the components $\{C= \pi(C') \subset \pi'(W \cap \sum_{i \ge 1} \PP(\sK)_i)\}$; Here different $C'$ may have the same image $C$.) For any $F_C$, take the largest $i$ such that there is a component $D \subset \PP(\sK)_i$ with $B_C :=\pi r_+(F_C)= \pi' r_-(F_C) \subset \pi' (W \cap D)$. For a general $s \in B_C$, the fiber $F_{C,s} \subset \Gamma_{s} \simeq \PP^i_{\kappa(s)} \times_{\kappa(s)} \PP^{r+i}_{\kappa(s)}$ over $s$ has dimension:
	\begin{align*}
	\dim F_{C,s} &\ge \dim F_{C}  - \dim (B_C) \ge \dim F_{C}  - \dim r_-(F_C) \\
	&\ge  \dim F_{C}   - \dim (W \cap D) = k -(k - {\rm codim}(\PP(\sK)_i \subset \PP(\sK)) ) \\
	&= {\rm codim}(\PP(\sK)_i \subset \PP(\sK)) \ge  i+1.
	\end{align*}
But since the general fiber of $\pi'$ over $s$ has dimension $i$, hence $F_{C,s}$ contains positive dimension fibers of $r_-$. Therefore $r_{-\,*} [F_C] = 0$. Hence
	$$\Gamma_* \Gamma^* [W] = r_{-\,*} ([\widetilde{W}] + \sum_{C} a_C [F_{C}]) =  r_{-\,*} [\widetilde{W}] = (-1)^r[W].$$ \end{proof}


\begin{lemma}\label{lem:surjectivity} If $\PP(\sG)$ is nonsingular and quasi-projective, and the dimension condition (\ref{eqn:weak.dim}) of (A) holds, then for every $[V] \in \CH_k(\PP(\sG))_{\rm tor.}$ the following holds: 
	$$\Gamma^* \, \Gamma_* [V]   = (-1)^r [V].$$
\end{lemma}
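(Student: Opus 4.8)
The plan is to mimic the Fu--Wang argument of Lemma~\ref{lem:injectivity} on the $\PP(\sG)$--side, with the torsion hypothesis playing the role that makes the variant of Cayley's trick (Lemma~\ref{lem:variant:Cayley}) usable there; unlike Lemma~\ref{lem:injectivity}, the asymmetry that $r_+\colon\Gamma\to\PP(\sG)$ is \emph{not} surjective --- its image is $\PP(\sG)_0$ --- forces an extra inductive step along the degeneracy stratification. Throughout, set $X_i:=X^{\ge r+i+1}(\sG)$, $\mathring X:=X\setminus X_1$, $\mathring{\PP}(\sG):=\PP(\sG)\setminus\PP(\sG)_1$.

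First I would note that, since restriction to an open subset commutes with each $\pi_{i\,*}$ and (because $\Gamma$ is a fibre product over $X$) with $\Gamma^*$ and $\Gamma_*$, the class $[V]$ restricts over $\mathring X$ to a class in $CH_k(\mathring{\PP}(\sG))_{\rm tor.}$, and over $\mathring X$ we are in the situation of Lemma~\ref{lem:variant:Cayley}, with $\mathring Z:=X_0\setminus X_1\hookrightarrow\mathring X$ lci of codimension $r+1$ and $\mathring\Gamma\simeq\PP(i^*\sG)\subset\mathring{\PP}(\sG)$. By that lemma $CH_k(\mathring{\PP}(\sG))_{\rm tor.}=\Im(\mathring\Gamma^*)$, and from $\mathring\Gamma_*\mathring\Gamma^*=(-1)^r\Id$ together with $\mathring\Gamma_*\mathring\pi_i^*=0$ one gets that $(-1)^r\mathring\Gamma^*\mathring\Gamma_*$ is the projector onto $\Im(\mathring\Gamma^*)$; hence $\mathring\Gamma^*\mathring\Gamma_*$ acts as $(-1)^r\Id$ on that summand, so $\delta:=\Gamma^*\Gamma_*[V]-(-1)^r[V]$ restricts to $0$ on $\mathring{\PP}(\sG)$, i.e.\ $\delta$ is supported on $\PP(\sG)_1$. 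Moreover $\delta\in CH_k(\PP(\sG))_{\rm tor.}$ (by $\pi_{i\,*}\Gamma^*=0$, Lemma~\ref{lem:proj.general}(3), and the torsion of $[V]$) and $\Gamma_*\delta=0$ (by $\Gamma_*\Gamma^*=(-1)^r\Id$, Lemma~\ref{lem:injectivity}). It then remains to show that such a $\delta$ vanishes.

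For the latter I would argue by descending induction on the stratification $\PP(\sG)_1\supset\PP(\sG)_2\supset\cdots$ (which terminates): assuming $\delta$ is supported on $\PP(\sG)_i$, restrict to the open stratum $\PP(\sG)_i\setminus\PP(\sG)_{i+1}$, lying over $X_i\setminus X_{i+1}$, where $\PP(\sG)$, $\PP(\sK)$, $\Gamma$ form a $\PP^{r+i}$-bundle, a $\PP^{i}$-bundle, and their fibre product. Here condition~(A), namely $\codim(\PP(\sG)_i\subset\PP(\sG))\ge r+i+1$ (equivalently $\codim(X_i\subset X_0)\ge 2i+1$), is exactly what makes the Fu--Wang dimension count run: representing the relevant classes by cycles meeting $\PP(\sG)_{i+1}$ properly via Chow's moving lemma on the smooth $\PP(\sG)$, the correction cycles supported over $X_{i+1}$ restrict, over a general point $s$, to subvarieties of the fibre $\Gamma_s\simeq\PP^{r+\bullet}_{\kappa(s)}\times\PP^{\bullet}_{\kappa(s)}$ of dimension strictly greater than that of the $\PP(\sG)$-fibre, hence they contain positive-dimensional fibres of $r_+$ and are annihilated by $r_{+*}$; combined with the projective bundle formula over $X_i\setminus X_{i+1}$ and the relations $\Gamma_*\delta=0$, $\pi_{j\,*}\delta=0$, this forces the image of $\delta$ in $CH(\PP(\sG)_i\setminus\PP(\sG)_{i+1})$ to vanish, so $\delta$ is supported on $\PP(\sG)_{i+1}$. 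Iterating, $\delta=0$, hence $\Gamma^*\Gamma_*[V]=(-1)^r[V]$.

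The step I expect to be the main obstacle is precisely this inductive descent: the bookkeeping of the dimension count --- isolating which correction cycles appear and verifying that their bases over $X$ have the controlled dimension supplied by~(A) --- together with the fact that $\Gamma$, and the deeper degeneracy loci, need not be smooth, so that one must work throughout with the lci Gysin maps $r_\pm^*$ (legitimate since $r_\pm$ are base changes of the morphisms $\pi,\pi'$ between the smooth varieties $\PP(\sG),\PP(\sK),X$ under~(A)) and with rational equivalence classes rather than naive cycle-theoretic pullbacks. I would also emphasise that the torsion hypothesis is indispensable: for a general class $[V]$ the restriction $[V]|_{\mathring{\PP}(\sG)}$ would have a nonzero ``locally free'' component, which $\mathring\Gamma_*$ annihilates, and then $\Gamma^*\Gamma_*[V]$ would differ from $(-1)^r[V]$ by exactly that component --- the analogue, for projectivizations, of the failure of $\Gamma^*\Gamma_*=\Id$ for general flips recorded in the remark after Theorem~\ref{thm:flip}.
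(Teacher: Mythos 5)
Your opening reduction is correct: the restriction of $[V]$ to $\mathring X = X\setminus X_1$ lies in $CH_k(\mathring{\PP}(\sG))_{\rm tor.}=\Im(\mathring\Gamma^*)$ by Lemma~\ref{lem:variant:Cayley}, the decomposition $\Id=(-1)^r\mathring\Gamma^*\mathring\Gamma_*+\sum\mathring\pi_i^*\mathring\pi_{i\,*}$ then gives $\Gamma^*\Gamma_*[V]-(-1)^r[V]=:\delta$ supported on $\PP(\sG)_1$, and $\pi_{j\,*}\delta=0$, $\Gamma_*\delta=0$. (You implicitly also invoke Lemma~\ref{lem:injectivity}, which requires $\PP(\sK)$ smooth quasi-projective; that is not in the stated hypothesis of this lemma, though it is part of condition~(A) of the theorem.) The gap is the inductive descent. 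To conclude that $\delta$ restricted to the locally closed stratum $\PP(\sG)_{i\setminus i+1}$ vanishes, you would need to convert the \emph{global} relations $\Gamma_*\delta=0$, $\pi_{j\,*}\delta=0$ into relations on a class $\delta''\in CH_k(\PP(\sG)_{i\setminus i+1})$ with $j_{i\,*}\delta''=\delta|_{-1\setminus i+1}$. That requires compatibilities of the shape $\Gamma_*\,j_{i\,*}=k_{i\,*}\,\Psi_*$ and $\pi_{j\,*}\,j_{i\,*}=i_{i\,*}(\cdot)$, which are precisely the excess-intersection identities of Lemma~\ref{lem:virtual.flips}(3). Those identities need the stratum $X_{i\setminus i+1}$ to be lci of the \emph{expected} codimension $(i+1)(r+i+1)$, i.e.\ condition~(B) — condition~(A) only gives a lower bound $\ge r+2i$ on $\codim X_i$ and says nothing about lci-ness for $i\ge 1$. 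Without those identities, ``$\Gamma_*\delta=0$ and $\pi_{j\,*}\delta=0$'' give no usable constraint on $\delta''$ in the rank-$(r+i+1)$ free $CH(X_{i\setminus i+1})$-module $CH(\PP(\sG)_{i\setminus i+1})$, and the ``Fu--Wang dimension count'' you gesture at has no cycle to run on: there is no transverse cycle representative of $\delta''$ inside the (possibly singular) stratum where Chow's moving lemma would apply, and it is not specified which correction cycles arise or why they are killed.

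The paper avoids this entirely by doing a \emph{single global} Fu--Wang count instead of a stratumwise induction. After Chow's moving lemma on the smooth $\PP(\sG)$ one takes $V$ transverse to $\sum_{i\ge1}\PP(\sG)_i$, uses the variant of Cayley's trick once, over $\mathring X$, to produce a $k$-cycle $\widetilde V\subset\Gamma$ with $r_{+\,*}[\widetilde V]\sim(-1)^r[V]$ — this is the only place the torsion hypothesis enters — and then writes $r_-^*\,r_{-\,*}\,r_+^*[V]=[\widetilde V]+\sum_C a_C[F_C]$, with each $F_C$ a $k$-dimensional cycle supported over $\pi(V\cap\sum_{i\ge1}\PP(\sG)_i)$. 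The hypothesis $\codim(\PP(\sG)_i\subset\PP(\sG))\ge r+i+1$ (a direct consequence of~(\ref{eqn:weak.dim})) plus transversality of $V$ give $\dim F_{C,s}\ge r+i+1$ over a general $s$, strictly exceeding the $\PP^{r+i}$-fibre dimension of $\pi$, so $F_{C,s}$ contains positive-dimensional fibres of $r_+$ and $r_{+\,*}[F_C]=0$; hence $\Gamma^*\Gamma_*[V]=r_{+\,*}[\widetilde V]=(-1)^r[V]$. If you want to make your outline rigorous under condition~(A), I would recommend switching to this one-shot argument: fix the transverse representative $V$ at the start, identify all correction cycles $F_C$ at once, and dispatch them by the fibre-dimension count, rather than attempting a stratum-by-stratum induction that secretly needs the lci structure of condition~(B).
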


\begin{proof}
Let $[V] \in \CH_k(\PP(\sG))_{\rm tor.}$,  i.e. $[V] \in \CH_k(\PP(\sG))$ such that $\pi_{i\,*} [V] = 0$ for all $i \in [0,r-1]$. By moving lemma we may assume $V$ intersects transversely with $\sum_{i \ge 1} \PP(\sG)_i$. Similar to the proof of Lemma \ref{lem:injectivity}, by variant of Cayley's trick Lemma \ref{lem:variant:Cayley}, over $\mathring{X}:=X\backslash X_{1}$, $\Gamma^* \Gamma_* [\mathring{V}] = (-1)^r [\mathring{V}]$, where $\mathring{V} = V \cap \mathring{X}$ and $[V_0] \in \PP(\sG)_{\rm tor.}$. Therefore there exists $\mathring{W}$ representing $\Gamma_* [\mathring{V}] \in \CH_{k-r}(\PP(\sK))$ such that $r_{-}^{-1}(\mathring{W})$ is a $k$-dimensional cycle and $r_{+\,*} (r_{-}^{-1}(\mathring{W}))$, though supported on $\PP(\sG)_1$, is rationally equivalent to $(-1)^r\mathring{V}$ in $\PP(\sG)$.

Therefore over the whole space, we have:
	$$r_-^* r_{-\,*} r_{+}^* [V] = [\widetilde{V}] + \sum_{C} a_C [F_{C}],$$ 
where $\widetilde{V}$ is the closure of $r_{-}^{-1}(\mathring{W})$ in $\Gamma$, and hence $r_{+*} \widetilde{V}$ is rationally equivalent to $(-1)^rV$, $a_C \in \ZZ$, and $F_{C}$ are irreducible $k$-dimensional cycles supported over $\pi (V \cap \sum_{i \ge 1} \PP(\sG)_i)$. Similar as before, for any $F_C$, take the largest $i \ge 1$ such that there is a component $D \subset \PP(\sG)_i$ with $B_C :=\pi r_+(F_C)= \pi' r_-(F_C) \subset \pi (V \cap D)$. For a general $s \in B_C$, the fiber $F_{C,s}$ has dimension:
	\begin{align*}
	\dim F_{C,s} &\ge \dim F_{C}  - \dim (B_C) \ge \dim F_{C}  - \dim r_-(F_C) \\
	&\ge  \dim F_{C}   - \dim (V \cap D)  = {\rm codim}(\PP(\sG)_i \subset \PP(\sG)) \ge  r+i+1.
	\end{align*}
Now since the general fiber of $\pi$ over $s$ has dimension $r+i$, hence $F_{C,s}$ contains positive dimension fibers of $r_+$. Therefore $r_{+\,*} [F_C] = 0$, and
	$$\Gamma^* \Gamma_* [V] = r_{+\,*} ([\widetilde{V}] + \sum_{C} a_C [F_{C}]) =  r_{+\,*} [\widetilde{V}] =(-1)^r [V].$$ 
\end{proof}

\begin{proof}[Proof of theorem \ref{thm:main} under condition (A)]
The injectivity of the map (\ref{eqn:main.thm}) follows directly from Lemma\ref{lem:pi_proj} and Lemma \ref{lem:injectivity}; the surjectivity of the map (\ref{eqn:main.thm}) follows from Lemma\ref{lem:pi_proj} and Lemma \ref{lem:surjectivity}. This completes the proof theorem \ref{thm:main} under condition (A).
\end{proof}

\subsection{Second approach} The idea of this second approach is that: if we stratify the space $X$ as before, then over each stratum the theorem reduces to a situation  very similar to standard flips case \S \ref{sec:flips}. Since we will argue over each stratum, we will need all strata to achieve the expected dimensions, but do not require regularity on the total space.

\begin{lemma}  \label{lem:stratum} Let $\sG$ be a coherent sheaf on a Cohen Macaulay scheme $X$ of homological dimension $\le 1$ and rank $r$. For a fixed integer $i \ge 0$, denote $Z= X^{\ge r+i+1}(\sG)$, and assume $X^{\ge r+i+2}(\sG) = \emptyset$. (That is, $Z$ is the bottom degeneracy locus of $\sG$; $\sG$ has constant rank $r+i+1$ over $Z$, and has rank $\le r+i$ over $X \backslash Z$.) Assume furthermore that $Z \subset X$ has the expected codimension $(i+1)(r+i+1)$. Denote $\sK = \sExt^1(\sG, \sO)$, $i \colon Z \hookrightarrow X$ the inclusion. Then $G_Z: =   i^* \sG$, $K_Z : = i^* \sK$ are vector bundles over $Z$ of rank $r+i+1$ and $i+1$ respectively. Consider the following base-change diagram for the fibered product $\Gamma= \PP(\sG) \times_X \PP(\sK)$ along the base-change $Z \hookrightarrow X$, with names of maps as indicated:
	\begin{equation} \label{diagram:strata}
	\begin{tikzcd}[back line/.style={}, row sep=1.2 em, column sep=2.6 em]
& \Gamma_Z = \PP(G_Z) \times_Z \PP(K_Z) \ar[back line]{dd}[near start]{r_{Z-}} \ar[hook]{rr}{\ell} \ar{ld}[swap]{r_{Z+}}
  & & \Gamma \ar{dd}{r_-} \ar{ld}{r_+} \\
 \PP(G_Z) \ar{dd}[swap]{\pi_Z}  
  & &  \PP(\sG)  \ar[crossing over, hookleftarrow, swap]{ll}[near end]{j}\\
& \PP(K_Z) \ar{ld}[swap]{\pi_Z'} \ar[back line, hook]{rr}[near start]{k} 
  & & \PP(\sK) \ar{ld}[near start]{\pi'}  \\
Z  \ar[hook]{rr}{i} & & X  \ar[crossing over, leftarrow]{uu}[near end, swap]{\pi}
	\end{tikzcd}
	\end{equation}
where $\Gamma_Z : = Z \times_X \Gamma = \PP(G_Z) \times_Z \PP(K_Z)$. 

Then the normal bundles of the closed immersions $i, j, k, \ell$ are respectively given by 
	\begin{align*}
	& \sN_i = G_Z \otimes K_Z , \qquad \qquad \qquad \quad \sN_j =  \Omega_{\PP(G_Z)/Z}(1) \boxtimes K_Z, \\
	&\sN_k =G_Z \boxtimes \Omega_{\PP(K_Z)/Z}(1) , \quad \text{and} \quad  \sN_\ell = \Omega_{\PP(G_Z)/Z}(1) \boxtimes \Omega_{\PP(K_Z)/Z}(1). 
	\end{align*}
The excess bundle for the front square is given by $\sV = \sO_{\PP(G_Z)}(1) \boxtimes K_Z$, and the excess bundle for the back square is $\sV' =  \sO_{\PP(G_Z)}(1) \boxtimes \Omega_{\PP(K_Z)/Z}(1)$. Therefore 
	\begin{equation*}
		\pi^* \, i_* (\blank) = j_*(c_{\rm top}(\sV) \cap \pi_Z^*(\blank)), \qquad r_{-}^* k_* (\blank) = \ell_* (c_{\rm top}(\sV') \cap r_{Z-}^*(\blank)).
	\end{equation*}
Similarly the excess bundle for the bottom square is given by $\sW = G_Z \boxtimes \sO_{\PP(K_Z)}(1)$, and for the top square is $\sW' =  \Omega_{\PP(G_Z)/Z}(1)  \boxtimes\sO_{\PP(K_Z)}(1) $. Therefore 
	\begin{equation*}
		\pi'^* \, i_* (\blank) = k_*(c_{\rm top}(\sW) \cap \pi_Z'^*(\blank)), \qquad r_{+}^* j_* (\blank) = \ell_* (c_{\rm top}(\sW') \cap r_{Z+}^*(\blank)).
	\end{equation*}
\end{lemma}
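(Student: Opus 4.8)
The plan is to reduce everything to an explicit local model around $Z$ in which the four embeddings $i,j,k,\ell$ are all realised as zero loci of \emph{regular} sections of vector bundles; then all four normal bundles and all four excess bundles can be read off from the relative Euler sequences on $\PP(G_Z)$ and $\PP(K_Z)$, and the four displayed identities become instances of the excess intersection formula \cite[Thm.~6.3]{Ful}, exactly as with the blowup key formula $(\ref{eqn:k.f.blowup})$.

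First I would pass to the formal neighbourhood $\widehat X$ of $Z$ in $X$, which loses nothing since all the statements in question are local around $Z$ (resp. around the fibres over $Z$). As in the proof of Lemma \ref{lem:variant:Cayley}, over $\widehat X$ the sheaf $\sG$ has a two-term presentation $\widehat\sigma\colon L\to V$ with $L,V$ locally free of ranks $i+1$ and $r+i+1$, $\sG=\Coker(\widehat\sigma)$, and dually $\sK=\Coker(\widehat\sigma^\vee\colon V^\vee\to L^\vee)$, with $V|_Z=G_Z$ and $L^\vee|_Z=K_Z$. Viewing $\widehat\sigma$ as a section of $L^\vee\otimes V$, its zero scheme is exactly $Z$, and $\codim(Z\subset X)=(i+1)(r+i+1)=\rank(L^\vee\otimes V)$ makes it a regular section, so $i$ is a regular embedding with $\sN_i=(L^\vee\otimes V)|_Z=G_Z\otimes K_Z$.

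Next, by the mechanism of the proofs of Lemmas \ref{lem:pi_proj} and \ref{lem:proj.general}, I would realise $\PP(\sG)$, $\PP(\sK)$, $\Gamma$ as regular zero loci in $\PP_{\widehat X}(V)$, $\PP_{\widehat X}(L^\vee)$, and $\PP(\sK)\times_{\widehat X}\PP(V)$ of sections of $q^*L^\vee\otimes\sO_{\PP(V)}(1)$, $q^*V\otimes\sO_{\PP(L^\vee)}(1)$, and $\Omega_{\PP(L^\vee)}(1)\boxtimes\sO_{\PP(V)}(1)$ respectively. On each of them the relevant bundle map restricts to one factoring through the relative Euler subbundle, and applying the same construction one level deeper exhibits $j$, $k$, $\ell$ as regular zero loci of sections of $q^*L^\vee\otimes\Omega_{\PP(V)}(1)|_{\PP(\sG)}$, $q^*V\otimes\Omega_{\PP(L^\vee)}(1)|_{\PP(\sK)}$, and $\Omega_{\PP(L^\vee)}(1)\boxtimes\Omega_{\PP(V)}(1)|_{\Gamma}$; since $\widehat\sigma,\widehat\sigma^\vee$ vanish along $Z$, each of these zero loci is the fibre over $Z\subset\widehat X$, i.e. $\PP(G_Z)=\PP(V)|_Z$, $\PP(K_Z)=\PP(L^\vee)|_Z$, $\Gamma_Z=\PP(G_Z)\times_Z\PP(K_Z)$. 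Restricting the corresponding bundles and using the relative Euler sequences (together with $V|_Z=G_Z$, $L^\vee|_Z=K_Z$) then gives $\sN_j=\Omega_{\PP(G_Z)}(1)\boxtimes K_Z$, $\sN_k=G_Z\boxtimes\Omega_{\PP(K_Z)}(1)$, $\sN_\ell=\Omega_{\PP(G_Z)}(1)\boxtimes\Omega_{\PP(K_Z)}(1)$. Each of the four squares of $(\ref{diagram:strata})$ is the base change of $i\colon Z\hookrightarrow X$ along one of $\pi,\pi',r_+,r_-$, hence Cartesian with regular horizontal arrow; its excess bundle is the cokernel of the canonical map from the normal bundle of the small embedding to the pullback of $\sN_i$, and since that map is the Euler-subbundle inclusion tensored with an identity, the cokernel is $\sO_{\PP(G_Z)}(1)\boxtimes K_Z=\sV$ for the front square and, by the identical Euler computation, $\sV'$, $\sW$, $\sW'$ for the other three. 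The four displayed key formulas are then exactly \cite[Thm.~6.3]{Ful}.

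The step I expect to be the main obstacle is the realisation of $\Gamma$, hence of $\sN_\ell$ and of the two excess bundles $\sV',\sW'$ attached to the embedding $\ell$: one must pin down the smooth ambient space over $\widehat X$ in which $\Gamma$ is cut out by a genuinely regular section and check that $\Gamma$, $\PP(\sK)$ and the fibres $\PP(G_Z),\PP(K_Z),\Gamma_Z$ all have their predicted codimensions, which is precisely where the hypothesis that every degeneracy locus of $\sG$ has expected dimension enters. Once that regularity is secured, the remaining work — the nested zero-locus descriptions and the Euler-sequence bookkeeping, with care about the Grothendieck convention and the $\boxtimes$-notation — is routine.
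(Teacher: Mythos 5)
Your proposal follows essentially the same route as the paper's own proof: pass to the formal neighbourhood $\widehat{X}$ of $Z$, present $\sG$ and $\sK$ there by a two-term complex $\hat\sigma\colon K^\vee\to G$ (your $L=K^\vee$, $V=G$) so that $Z=Z(\hat\sigma)$ is cut out by a regular section of $K\otimes G$, realise $\PP(\sG)\subset\PP(G)$ as the zero locus of a regular section of $\sO_{\PP(G)}(1)\otimes K$, realise $\Gamma\subset\PP(G)\times_X\PP(K)$ as the zero locus of a regular section of $\sO_{\PP(G)}(1)\boxtimes\Omega_{\PP(K)}(1)$ (via descent of $\hat\sigma$ through the Euler subbundle exactly as in Lemma~\ref{lem:proj.general}(1)), and read off the four normal bundles and four excess bundles from the relative Euler sequences before invoking the excess intersection formula. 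The step you flag as the main obstacle — pinning down the ambient space and regularity for $\ell$ — is handled in the paper in exactly the way you describe: once $\hat\sigma$ is in hand, $\hat\sigma$ descends over $\PP(K)$ to $\bar\sigma\colon\shT_{\PP(K)}(-1)\to G$, which replaces $K$ by $\Omega_{\PP(K)}(1)$ and reduces the $\ell$-case to the already-treated $j$-case.

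One small point worth tightening in your write-up: the regularity you need for the lemma itself is not ``every degeneracy locus has expected dimension'' (condition (B) of the main theorem), but only the weaker hypotheses built into the lemma's statement — $Z$ is lci of codimension $(i+1)(r+i+1)$ with $\sG$ locally free of rank $r+i+1$ there, and $\sG$ has rank $\le r+i$ off $Z$. These are precisely what guarantee that $\hat\sigma$ extends to $\widehat{X}$ as a map of bundles of ranks $i+1$ and $r+i+1$ with $Z(\hat\sigma)=Z$ and with the induced sections regular, so no global expected-dimension assumption is invoked inside the proof of this lemma; the global condition (B) enters only later, when the lemma is applied stratum by stratum in the proof of Theorem~\ref{thm:main}.
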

\begin{proof} 
As the statements are local, we may assume $\sG = \Coker (F \xrightarrow{\sigma} E)$, where $E,F$ are vector bundles of rank $n$ and $m$. Then by our assumption on $Z$ and Lemma \ref{lem:normalbundle}, $Z \subset X$ is a closed locally compete intersection subscheme, and $\sN_i = G_Z \otimes K_Z$; Moreover, the image $\im (\sigma|_Z) \subset E|_Z$ is a vector sub-bundle; let us denote it by $B_Z$. Therefore the map $\sigma|_Z$ induces two short exact sequences of vector bundles over $Z$:
	$$0 \to K_Z^\vee \to F|_Z \to B_Z \to 0 \quad \text{and} \quad 0 \to B_Z \to E|_Z \to  G_Z \to 0.$$
	
Next, over $\PP(\sG) \subset \PP(E)$, the composition $\pi^*F \xrightarrow{\pi^*\sigma} \pi^*E \to \sO_{\PP(E)}(1)$ is zero, hence $\pi^*\sigma$ factors through a map between vector bundles $\widetilde{\sigma} \colon \pi^* F \to \Omega_{\PP(E)/X}^1(1)$. As the rank of $\widetilde{\sigma}$ at a point $p \in \PP(\sG)$ agrees with the rank of $\sigma$ at $\pi(p)$, therefore $\widetilde{Z}: =\pi^{-1}(Z) = \PP(G_Z)$ is the bottom degeneracy locus of $\widetilde{\sigma}$. We claim that there is an exact sequence of vector bundles: 
	$$0 \to \pi_Z^* K_Z^\vee \to \pi_Z^* F_Z  \xrightarrow{\widetilde{\sigma} }\Omega_{\PP(E)/X}^1(1)|_{\widetilde{Z}} \to  \Omega_{\PP(G_Z)/Z}^1(1) \to 0.$$
Then by Lemma \ref{lem:normalbundle}, $\sN_j =\Omega^1_{\PP(G_Z)/Z}(1) \boxtimes K_Z$ and
	$\sV = \pi_Z^* \sN_i/ \sN_j \simeq \sO_{\PP(G_Z)}(1) \boxtimes K_Z.$

To prove the claim, it suffices to notice that over $\widetilde{Z}$ there is a commutative diagram:
	\begin{equation*}
	\begin{tikzcd}
		 \pi_Z^* B_Z  \ar[dashed]{r} \ar[equal]{d} & \Omega_{\PP(E)/X}^1(1)|_{\widetilde{Z}} \ar{d} \ar[dashed]{r} & \Omega_{\PP(G_Z)/Z}^1(1) \ar{d}  \\
		 \pi_Z^* B_Z  \ar{r} & \pi_Z^* E_Z \ar{r} \ar{d} & \pi_Z^* G_Z \ar{d} \\
			& \sO_{\PP(E)}(1)|_{\widetilde{Z}} \ar[equal]{r}  & \sO_{\PP(G_Z)}(1) .
	\end{tikzcd}
	\end{equation*}
In the diagram, the three columns and the last two rows are exact, hence the first row is a short exact sequence. Combined with the short exact sequence of vector bundles $0 \to \pi_Z^*(K_Z^\vee) \to \pi_Z^*(F|_Z) \to \pi_Z^*(B_Z) \to 0$, the claim follows. Notice that in above argument we do not use the condition $n \ge m$, hence the same argument works for all the other cases.
\end{proof}

\begin{lemma}[``Virtual" flips]\label{lem:virtual.flips} In the situation of Lemma \ref{lem:stratum}, denote
	\begin{align*}
	& \Psi_*(\blank): = r_{Z-*}(c_{\rm top}(\sW')\cap r_{Z+}^*(\blank))  \colon \CH(\PP(G_Z)) \to \CH(\PP(K_Z)),\\
	& \Psi^*(\blank): = r_{Z+*}(c_{\rm top}(\sV') \cap  r_{Z-}^*(\blank))   \colon \CH(\PP(K_Z)) \to \CH(\PP(G_Z)),
	\end{align*}
and furthermore for any $a \in [0,r-1]$, denote 
	$\pi_{Z,a}^*(\blank) := c_{\rm top}(\sV) \cdot \zeta^a \cap \pi_Z^*(\blank).$
Then
	\begin{enumerate}
	\item \label{lem:virtual.flips-1}
	$\Psi_* \Psi^* = (-1)^r \Id$;
	\item \label{lem:virtual.flips-2}
	Then for any $k \ge 0$, there is an isomorphism of Chow groups:
	\begin{align*}
	 \bigoplus_{a=0}^{r-1} \CH_{k-(r-1)+a}(Z)  \oplus \CH_{k-r}(\PP(K_Z)) \xrightarrow{\sim} \CH_{k}(\PP(G_Z))
	\end{align*}
given by $(\oplus_{a=0}^{r-1} \alpha_a, \gamma) \mapsto  \sum_{a=0}^{r-1} \pi_{Z,a}^* \alpha_a + \Psi^* \gamma$.
	\item \label{lem:virtual.flips-3}
	The following identities hold: for any $a \in [0,r-1]$,
		$$\Gamma^* k_* (\blank) = j_* \Psi^*(\blank), \quad \Gamma_* j_* (\blank) = k_* \Psi_*(\blank), \quad \pi_a^* \, i_* (\blank) = j_* \, \pi_{Z,a}^*(\blank).$$
	\end{enumerate}	
\end{lemma}

\begin{proof} For the first two statements, notice that if we write $F = G_Z^\vee$, $F'=K_Z^\vee$, with rank $n=r+i$ and $m=i$, and $S= Z$, then $P=\PP(G_Z)$, $P'=\PP(K_Z)$, $E = \Gamma_Z$, and we are in a very similar situation as the standard flip case \S \ref{sec:flips}. 
In fact, for \eqref{lem:virtual.flips-1}, using the notation of the proof of Theorem \ref{thm:flip}, then $\Psi_*$ and $\Psi^*$ correspond to the correspondence given by $(-1)^{n}c_{n}(\sV')$ and $(-1)^m c_{m}(\sV)$ respectively (instead of $c_{m}(\sV)$ for $\Phi_*$ and $c_{n}(\sV')$ for $\Phi^*$). However the composition $c_{n}(\sV') * c_{m}(\sV)$ is still computed by the same formula as $c_{m}(\sV) * c_{n}(\sV')$ (with the role of first and third factor of the product $P'\times_S P \times_S P'$ switched), by commutativity of intersection product. Hence $c_{n}(\sV') * c_{m}(\sV) = [\Delta_{P'}]$, and $\Psi_* \Psi^* = (-1)^{m+n} \Id = (-1)^r \Id$.

For \eqref{lem:virtual.flips-2}, the same argument of Lemma \ref{lem:flip:Phi} works. In fact, the image of $\Psi^*$ is the ``sub-$\CH(Z)$-module" generated by $1,\zeta,\ldots,\zeta^i$. Hence up to elements of $\Im \Psi^* = \Span \{ 1, \zeta^1, \ldots, \zeta^i \}$, the 
map 
	$$\pi_{Z,a}^*(\blank) =  \zeta^a \cdot c_{\rm top}(\sV) \cdot \cap \pi_Z^*(\blank) =  \zeta^a \cdot (\zeta^{i+1} + \text{lower order terms}) \cdot \pi_Z^*(\blank)$$
hits each element of the basis $\{\zeta^{i+a+1} \mod \Im \Psi^*\}_{a \in [0,r-1]}$ of the  quotient 
	$\CH(\PP(G_Z))/\Im \Psi^*$. 
Therefore the result follows.
	
For \eqref{lem:virtual.flips-3}, it follows directly from Lemma \ref{lem:stratum} that for any $\gamma \in \CH(\PP(K_Z))$, 
	\begin{align*}
	\Gamma^*k_* \gamma = r_{+*} r_-^*k_* \gamma = r_{+*} \ell_{*} (c_{\rm top}(\sV') \cap r_{Z-}^* \gamma) = j_* r_{Z+\,*}  (c_{\rm top}(\sV') \cap r_{Z-}^* \gamma) = j_* \Psi^* \gamma,
	\end{align*}
and similarly $\Gamma_* j_* = k_* \Psi_*$. Also for any $a \in [0,r-1]$ and $\alpha \in \CH(Z)$, 
	\begin{align*}
	\pi_a^*(i_* \alpha) = \pi^*(\zeta^a \cdot i_* \alpha) =   j_*  (\zeta^a \cdot c_{\rm top}(\sV) \cap \pi_Z^*(\alpha)) = j_* \pi_{Z,a}^*(\alpha). 
	\end{align*}
\end{proof}

\begin{proof}[Proof of theorem \ref{thm:main} under condition (B)]
Stratify the space $X$ by the same way as in the first approach, namely $X_i:= X^{\ge r+i+1}(\sG)$ for $i \ge -1$, and similarly for $\PP(\sG)_i$, $\PP(\sK)_i$ and $\Gamma_i$. For each $i \ge -1$, we will denote the natural inclusions by: $i_{i} \colon X_i \hookrightarrow X$, $j_{i} \colon \PP(\sG)_i \hookrightarrow \PP(\sG)$,  $k_{i} \colon \PP(\sK)_{i} \hookrightarrow \PP(\sK)$ and $\ell_{i} \colon \Gamma_i \hookrightarrow \Gamma$. For $i \ge 0$, we also denote $i_{i,i-1} \colon X_{i} \hookrightarrow X_{i-1}$ the natural inclusion, and $j_{i,i-1}$, $k_{i,i-1}$ and $\ell_{i,i-1}$ are defined similarly. Finally for each pair $(i,j)$ with $j > i \ge -1$, denote by $X_{i\backslash j} : = X_{i} \backslash X_{j}$; $\PP(\sG)_{i\backslash j}$, $\PP(\sK)_{i\backslash j}$ and $\Gamma_{i\backslash j}$ are defined by the same manner. By abuse of notations, the inclusion $i_i \colon X_{i\backslash j} \hookrightarrow X \backslash X_j = X_{-1 \backslash j}$ is also denoted by $i_i$, and similarly for other inclusions.

For any fixed integer $i \ge 0$, if we assume condition (B) of Theorem \ref{thm:main} is satisfied, then $Z: = X_{i\backslash i+1} \subset X \backslash X_{i+1} = X_{-1 \backslash i+1}$ is a locally complete intersection subscheme of codimension $(i+1)(r+i+1)$, and $\sG$ has constant rank $r+i+1$ over $Z$. Therefore the conditions of Lemma \ref{lem:stratum} are satisfied by $Z \subset X \backslash X_{i+1}$ and $\sG$, with $\PP(G_Z) = \PP(\sG)_{i \backslash i+1}$, $\PP(K_Z) = \PP(\sK)_{i \backslash i+1}$ and $\Gamma_Z = \Gamma_{i \backslash i+1}$, also $i = i_i$, $j=j_i$, $k=k_i$ and $\ell = \ell_i$. Hence results of Lemma \ref{lem:virtual.flips} can be applied.

Now our goal is to show the isomorphism of Lemma \ref{lem:virtual.flips} \eqref{lem:virtual.flips-2} over each stratum can indeed be integrated into an isomorphism of the map (\ref{eqn:main.thm}) of Theorem \ref{thm:main}.

\medskip \textit{Surjectivity of the map (\ref{eqn:main.thm}).} For each $i \ge -1$, there is an exact sequence:
	$$
	\begin{tikzcd}
	 \CH(\PP(\sG)_{i \backslash i+1})  \ar{r}{j_{i\,*} }& \CH(\PP(\sG) \backslash \PP(\sG)_{ i+1})    \ar{r} &  \CH(\PP(\sG) \backslash \PP(\sG)_{i})  \ar{r} & 0,
	\end{tikzcd}
	$$
for which if $i = i_{\max}+1$, then the middle term is the whole space, where $i_{\max}$ is the largest number such that $X_{i_{\max}} \ne \emptyset$.  (Since $X$ is locally Noetherian of pure dimension, there exists only finitely many strata and such an $i_{\max}$ always exists.)  Therefore inductively we see $\CH(\PP(\sG))$ is generated by the images of $j_{i \,*} \colon \CH(\PP(\sG)_{i \backslash i+1}) \to \CH(\PP(\sG))$ for all strata $\PP(\sG)_{i \backslash i+1}$, $i \ge -1$, where $i=-1$ corresponds to the open stratum.  

Hence we need only show that the image of the map (\ref{eqn:main.thm}) contains the image of the strata $\CH(\PP(\sG)_{i \backslash i+1})$ in $\CH(\PP(\sG))$ for each $i \ge -1$. The open stratum case $i=-1$ follows from projective bundle formula. For other cases, i.e $i \ge 0$, set $Z: = X_{i\backslash i+1} \subset X \backslash X_{i+1}$ as above, and for simplicity denote $j_{*} :=j_{i \,*} \colon \CH(\PP(\sG)_{i \backslash i+1}) \to \CH(\PP(\sG))$, which agrees with notations of Lemma \ref{lem:stratum} and Lemma \ref{lem:virtual.flips}; Similarly for the maps $i,k,\ell$. Then by Lemma \ref{lem:virtual.flips} \eqref{lem:virtual.flips-2}, any $\alpha \in \CH(\PP(\sG)_{i \backslash i+1}) = \CH(\PP(G_Z))$ can be written as $\alpha = \sum_{a=0}^{r-1} \pi_{Z,a}^* \alpha_a + \Psi^* \gamma$, for certain $\alpha_a \in \CH(Z)$ and $\gamma \in \PP(K_Z) = \PP(\sK)_{i\backslash i+1}$. Therefore by Lemma \ref{lem:virtual.flips}\eqref{lem:virtual.flips-3},
	\begin{align*} j_* (\alpha) = j_*(\sum_{a=0}^{r-1} \pi_{Z,a}^* \alpha_a + \Psi^* \gamma) =  \sum_{a=0}^{r-1} \pi_{a}^* (i_* \alpha_a) + \Gamma^* (k_* \gamma),
	\end{align*}
i.e. the image of $j_*$ is contained in the image of the map (\ref{eqn:main.thm}). Hence we are done.
	
\medskip \textit{Injectivity of the map (\ref{eqn:main.thm}).} This part is a little tricky; The key observation is that the above excision exact sequence becomes a short exact sequence if we take the image of first map. The injectivity of $\pi_a^*$ follows from Lemma \ref{lem:pi_proj}; It remains to show the injectivity of $\Gamma^*$. For each $i \ge -1$, there is a commutative diagram of {\em short exact sequences}:
	\begin{equation*}
	\begin{tikzcd}
	0  \ar{r} & \Im k_{i\,*}  \ar{r} \ar{d}{\Gamma^*|_{\Im k_{i\,*}}}  & \CH(\PP(\sK)_{-1 \backslash i+1})  \ar{r}  \ar{d}{\Gamma^*|_{-1 \backslash i+1}} &  \CH(\PP(\sK)_{-1 \backslash i})  \ar{r} \ar{d}{\Gamma^*|_{-1 \backslash i}} & 0. \\
	0   \ar{r} & \Im j_{i\,*}   \ar{r} & \CH(\PP(\sG)_{-1 \backslash i+1})  \ar{r} &  \CH(\PP(\sG)_{-1 \backslash i})  \ar{r} & 0,
	\end{tikzcd}
	\end{equation*}
where recall the maps $k_{i\,*} $ and $ j_{i\,*}$ are the inclusions to (an open subset of) the whole space:
	$$k_{i\,*} \colon \CH(\PP(\sK)_{i \backslash i+1}) \to \CH(\PP(\sK)_{-1 \backslash i}), \qquad j_{i\,*} \colon \CH(\PP(\sG)_{i \backslash i+1}) \to \CH(\PP(\sG)_{-1 \backslash i+1}).$$
We want to show that for each $i \ge 0$, the map $\Gamma^*|_{\Im k_{i\,*}}$ is injective. Set $Z: = X_{i\backslash i+1} \subset X \backslash X_{i+1}$ as above, then the question reduces to show: in the following commutative diagram 
	$$
	\begin{tikzcd}[row sep= 2.6 em, column sep = 5 em] \CH(\PP(\sK)_{i\backslash i+1}) \ar[hook]{r}{\Psi^*} \ar[two heads]{d}{k_{i\,*}} & \CH(\PP(\sG)_{i\backslash i+1}) \ar[two heads]{d}{j_{i\,*}} \\
	\Im k_{i*}  \ar{r}{\Gamma^*|_{\Im k_{i\,*}}}& \Im j_{i*},
	\end{tikzcd}
	$$
(which is commutative by Lemma \ref{lem:virtual.flips}\eqref{lem:virtual.flips-3}) the injection $\Psi^*$ induces an injection $\Gamma^*$ on the image. In fact, for any $\gamma \in \CH(\PP(\sK)_{i\backslash i+1})$, if $\Gamma^* k_{i\, *} \gamma = j_{i\,*} \Psi^* \gamma = 0$, then by Lemma \ref{lem:virtual.flips} \eqref{lem:virtual.flips-1}, we have $\gamma = (-1)^r \Psi_*  \Psi^* \gamma$. Therefore by Lemma \ref{lem:virtual.flips}\eqref{lem:virtual.flips-3},
	$$k_{i\,*} \, \gamma = (-1)^r k_{i\,*}\, \Psi_* \, \Psi^*\, \gamma = (-1)^r \Gamma_* \,j_{i\,*} \, \Psi^*\, \gamma = 0.$$
Hence $\Gamma^*|_{\Im k_{i\,*}}$ is injective. Now by induction, starting with the case $i=0$, when the injectivity of $\Gamma^*|_{-1 \backslash 1}$ follows from the following commutative diagram
	\begin{equation*}
	\begin{tikzcd}
	0  \ar{r} & \Im k_{0\,*} \ar[equal]{r} \ar[hook]{d}{\Gamma^*|_{\Im k_{i\,*}}}  & \CH(\PP(\sK)_{-1 \backslash 1})  \ar{r}  \ar{d}{\Gamma^*|_{-1 \backslash 1}} & 0  \ar{r} \ar{d}{0} & 0. \\
	0   \ar{r} & \Im j_{0\,*}   \ar[hook]{r} & \CH(\PP(\sG)_{-1 \backslash 1})  \ar{r} &  \CH(\PP(\sG)_{-1 \backslash 0})  \ar{r} & 0,
	\end{tikzcd}
	\end{equation*}
we can inductively show that $\Gamma^*|_{-1 \backslash i}$ is injective for all $i=0,1, 2, \ldots, i_{\max}, i_{\max} +1$, where $i_{\max}$ is the largest number $i_{\max}$ such that $X_{i_{\max}} \ne \emptyset$. Therefore $\Gamma^* = \Gamma^*|_{-1 \backslash i_{\max}+1}$ is injective on the whole space. Notice that, from above argument, we also obtain that $\Gamma_*\, \Gamma^* = (-1)^r \Id$ holds, since it is true on the image of each stratum. Together with Lemma \ref{lem:pi_proj} and Lemma \ref{lem:proj.general}, this completes the proof of Theorem \ref{thm:main}. 
 \end{proof}

\subsection{First examples}
\subsubsection{Universal $\Hom$ spaces} \label{sec:Hom} Let $S$ be a Cohen--Macaulay scheme, and let $V$ and $W$ be two vector bundles over $S$. Without loss of generality, we may assume $\rank W \le \rank V$. Consider the total space of maps between $V$ and $W$:
	$$X = |\sHom_S(W, V)| =  |\sHom_S(V^\vee, W^\vee)|.$$
 Then there are tautological maps over $X$:
 	$$\phi \colon W \otimes \sO_X \to V \otimes \sO_X \quad \text{and} \quad \phi^\vee \colon V^\vee \otimes \sO_X \to W^\vee \otimes \sO_X.$$ 
Let $\sG = {\rm Coker} (\phi)$ and $\sK = \sExt^1(\sG,\sO_X) = {\rm Coker}(\phi^\vee)$. Then it is easy to see that the condition (B) of Theorem \ref{thm:main} is satisfied, and Theorem \ref{thm:main} holds for 
	$$\PP(\sG) =\Tot_{\PP(V)}(W^\vee \otimes_{S} \Omega_{\PP(V)/S}(1)) \quad \text{and} \quad \PP(\sK) =\Tot_{\PP(W^\vee)}(\Omega_{\PP(W^\vee)/S}(1) \otimes_{S} V).$$ 
Notice that any map $\sigma \colon W \to V$ over $S$ determines a section $s_{\sigma} \colon S \to X$, such that $s_\sigma^* \phi = \sigma$, $s_\sigma^* \phi^\vee = \sigma^\vee $. Then $\Coker (\sigma)$ and $\Coker (\sigma^\vee)$ (and their projectivizations) are just the pull-backs of $\sG$ and $\sK$ (and the projectivizations $\PP(\sG)$ and $\PP(\sK)$) along the section map $s_{\sigma}$. 

Similarly, we can consider the projectivization version: 
 	$$Y = \PP_{S,\, \rm sub}(\sHom_S(W, V)) =  \PP_{S,\, \rm sub}(\sHom_S(V^\vee, W^\vee)).$$
Over $Y$ there are tautological maps: 
	$$\psi \colon W \otimes \sO_{Y}(-1) \to V \otimes \sO_Y,  \quad \text{and} \quad  \psi^\vee \colon V^\vee \otimes \sO_Y \to W^\vee \otimes \sO_{Y}(1).$$
Then the condition (B) of Theorem \ref{thm:main} is satisfied for $\sM = {\rm Coker} (\psi)$ and $\sN = \sExt^1(\sM,\sO_X) = {\rm Coker}(\psi^\vee)$, and Theorem \ref{thm:main} holds for 
	$$\PP(\sM) = \PP_{\PP(V),\,\rm sub}(W^\vee \otimes \Omega_{\PP(V)/S}(1)) \quad \text{and} \quad \PP(\sN) =  \PP_{\PP(W^\vee),\,\rm sub}(\Omega_{\PP(W^\vee)/S}(1) \otimes V).$$ 
One may also consider the linear sections of the space $Y$ as in HPD theory \cite{KuzHPD, BBF}.

\subsubsection{Flops and Springer resolutions} \label{sec:springer}
 In the situation of Theorem \ref{thm:main}, if we take $r=0$, then $\PP(\sG)$ and $\PP(\sK) = \PP(\sExt^1(\sG,\sO_X))$ are both Springer type partial desingularizations of the first degeneracy locus $X_{\rm sg}(\sG) = X^{\ge 1}(\sG) \subset X$. They are related by a flop, and $\Gamma = \PP(\sG) \times_{X} \PP(\sK)$ is the graph closure for the rational map $ \PP(\sG) \dasharrow \PP(\sK)$. For simplicity, we assume $X $ is irreducible. Then Theorem \ref{thm:main} states that if either 
 	\begin{enumerate}
	\item[(A)] $\PP(\sG)$, $\PP(\sK)$ are smooth and quasi-projective (hence they are both resolutions of $X_{\rm sg}(\sG)$), $\Gamma = \PP(\sG) \times_{X} \PP(\sK)$ is irreducible and $\dim \Gamma =\dim X -1$; or 
	\item[(B)] $X$ is Cohen--Macaulay and $\codim X^{\ge i}(\sG) =i^2$ for $i \ge 1$.
	\end{enumerate} 
Then the graph closure $\Gamma$ of the flop $\PP(\sG) \dasharrow \PP(\sK)$ induces isomorphisms:
 	$$\Gamma^* \colon \CH(\PP(\sK)) \simeq \CH(\PP(\sG)), \qquad \Gamma_* \colon \CH(\PP(\sG)) \simeq \CH(\PP(\sK)).$$
 
 

\subsubsection{Cohen--Macaulay subschemes of codimension $2$} \label{sec:CM2}
Let $X$ be an irreducible scheme, and $Z \subset X$ a codimension $2$ subscheme whose ideal $\sI_Z$ has homological dimension $\le 1$. This holds in particular for any codimension two Cohen-Macaulay subscheme $Z \subset X$ inside a regular scheme $X$, by the Auslander--Buchsbaum theorem. (In fact, in this case $X$ clearly has the resolution property, there always exist locally free sheaves $\sF$ and $\sE$, and a short exact sequence $0 \to \sF \to \sE \to \sI_Z \to 0$, with $\rank \sF = \rank \sE -1$; and by the Hilbert--Burch theorem, any Cohen--Macaulay codimension $2$ subscheme of $X$ arises in this way.)

Consider the degeneracy $X^{\ge 1+i}(\sI_Z)$ for $i \ge 0$ as before (note $\rank 
\sI_Z = 1$), then $X^{\ge 1+i}(\sI_Z)$ is the loci where the ideal $\sI_Z$ needs no less than $i+1$ generators. It is known (e.g. see \cite{ES}) that if $\codim X^{\ge 1+i}(\sI_Z) \ge i+1$ for $i \ge 1$, then 
	$\pi \colon \PP(\sI_Z) = \Bl_Z X \to X$
is the blowup of $X$ along $Z$ and is irreducible, and
	$\widetilde{Z}: = \PP(\sExt^1(\sI_Z, \sO_X))$
is the Springer type desingularization of $Z$. Notice that if $X$ is Goreinstein, then $\widetilde{Z} \simeq \PP(\sExt^1(\sI_Z, \omega_X)) = \PP(\omega_Z)$, where $\omega_X$ and $\omega_Z$ are the dualizing sheaves.  Theorem \ref{thm:main} states that if whether 
 	\begin{enumerate}
	\item[(A)] $\Bl_Z X$ and $\widetilde{Z}$ are smooth and quasi-projective, $\widetilde{Z}$ maps birational to $Z$ (hence $\widetilde{Z}$ is a resolution of $Z$), and $\codim X^{\ge 1 +i}(\sI_Z) \ge 1+2i$ for $i \ge 1$ (or equivalently $\Gamma := \Bl_Z X \times_{X} \widetilde{Z}$ is irreducible and $\dim \Gamma =\dim X -1$); or 
	\item[(B)] $X$ is Cohen--Macaulay and $\codim X^{\ge 1+i}(\sI_Z) = i(1+i)$ for $i \ge 1$.
	\end{enumerate} 
Then for any $k \ge 0$, there is an isomorphism of Chow groups:
 	$$\Gamma^* \oplus \pi^* \colon \CH_{k-1}(\widetilde{Z}) \oplus \CH_k(X) \xrightarrow{\sim} \CH_k(\Bl_Z X).$$

\section{Applications}
\subsection{Symmetric powers of curves} \label{sec:SymC}
Let $C$ be a smooth projective curve of genus $g \ge 1$ over $\CC$, for $d \in \ZZ$ denote by $C^{(d)}$ the {$d$-th symmetric power of $C$}. Then $C^{(d)}$ is smooth projective of dimension $d$, parametrises effective zero cycles of degree $d$ on $C$. By convention $C^{(0)} = \{ 0 \}$ is the trivial zero cycle; $C^{(d)} = \emptyset$ for $d <0$. There is an {\em Abel--Jacobi map}: 
	$$AJ \colon C^{(d)} \to \Pic^d(C), \qquad AJ \colon D \mapsto \sO(D),$$
 where $\Pic^d(C)$ is the Picard variety of line bundles of degree $d$ on $C$. The fibre of $AJ$ over a point $\sL = \sO(D) \in X=\Pic^{d}(C)$ is the linear system $|\sL| =\PP_{\rm sub}(H^0(C,\sL)) = \PP(H^0(C,\sL)^\vee)$. If $d \ge 2g-1$, by Riemann-Roch $AJ$ is a projective $\PP^{d-g}$-bundle over $\Pic^{d}(C)$, which makes the case $0 \le d \le 2g-2$ most interesting. If $g \le d \le 2g-1$, then $AJ$ is surjective, with generic fiber $\PP^{d-g}$, and the fiber dimension jumps over   $W_{d}^{d-g+i}$ for $i\ge1$, where $W_{d}^{k}$ is the {\em Brill--Noether locus}, defined as:
	$$W_{d}^k := W_{d}^k(C) : = \{\sL \mid \dim H^0(C,\sL) \ge k+1\} \subset \Pic^d(C).$$
If $0 \le d \le g-1$, then $AJ$ maps birationally onto the Brill--Noether loci $W_{d}^{0} \subset \Pic^d(C)$, which has codimension $g-d$, and the dimension jumps over $W_{d}^{i}$ for $i \ge 1$.

The cases $g-1 \le d \le 2g-2$ and $0 \le d \le g-1$ are naturally related by the involution $\sO(D) \mapsto \sO(K-D)$, which induces canonical isomorphism 
$W^k_d  \simeq W^{g-d+k-1}_{2g-2-d}$. 
Following Toda \cite{Tod}, from now on we use the following notation: set an integer $n \ge 0$, and set 
	$$d = g-1 +n, \quad \text{and} \quad d' = 2g-2-d = g-1-n.$$
(We do not restrict ourselves to $n \le g-1$, though this is the most interesting case.) Therefore apart from the usual Abel-Jacobi map, 
we also have its involution version:
	$$AJ^\vee \colon C^{(d')}=C^{(g-1-n)} \to \Pic^{d}(C), \qquad AJ^\vee \colon  D \mapsto \sO(K_C - D).$$	
The fiber of $AJ^\vee$ over a point $\sL \in \Pic^{d}(C)$ is the linear system $|\sL^\vee(K_C)| =\PP_{\rm sub}(H^1(C,\sL)^*) = \PP(H^1(C,\sL))$. Therefore we have the following fibered diagram:
    \begin{equation}\label{diag:AJ}
    	\begin{tikzcd}[row sep=0.6 em, column sep=2.3 em]
    	  	&\Gamma: = C^{(g-1+n)} \times_{\Pic^{g-1+n}(C)} C^{(g-1-n)}  \ar{ld}[swap]{r_+} \ar{rd}{r_-}& \\
	C^{(g-1+n)} \ar{rd}[swap]{AJ}& & C^{(g-1-n)} \ar{ld}{AJ^\vee} \\
		& \Pic^{g-1+n}(C).
	\end{tikzcd}
    \end{equation}

\begin{corollary} \label{cor:SymC} 
For a smooth projective curve $C$ of genus $g \ge 1$, and integers $n \ge 0$, $k \ge 0$, there is an isomorphism of integral Chow groups:
	\begin{align*}
	\CH_{k-n}(C^{(g-1-n)}) \oplus \bigoplus_{i=0}^{n-1} \CH_{k-(n-1)+i}(\Pic^{g-1+n} C)    \xrightarrow{\sim} \CH_{k}(C^{(g-1+n)})
	\end{align*}
given by $(\gamma, \oplus_{i=0}^{n-1} \alpha_i) \mapsto   \beta =\Gamma^* \gamma +  \sum_{i=0}^{n-1} c_1(\sO(1))^i \cap (AJ)^*\alpha_i $, where $\Gamma^* = r_{+\,*}\, r_{-}^{*}$ as usual, and $\sO(1)$ is the line bundle $\sO_{\PP(\sG)}(1)$ under the identification $C^{(g-1+n)} = \PP(\sG)$ below. The same map also induces an isomorphism of Chow motives:
	\begin{align*} 
	[\Gamma]^t \oplus \bigoplus_{i=0}^{n-1} h^i \circ (AJ)^*  \colon  \foh(C^{(g-1-n)})(n) \oplus \left( \bigoplus_{i=0}^{n-1} \foh(\Pic^{g-1+n}(C)) (i) \right) \xrightarrow{\sim} \foh(C^{(g-1+n)}).
	\end{align*}
\end{corollary}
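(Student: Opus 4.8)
The plan is to realize $AJ\colon C^{(g-1+n)}\to\Pic^{g-1+n}(C)$ as the projectivization of a coherent sheaf of homological dimension $\le 1$ and then apply Theorem \ref{thm:main} and Corollary \ref{cor:main.motif} directly. Put $X=\Pic^{g-1+n}(C)$, fix a Poincar\'e line bundle $\mathcal{P}$ on $C\times X$ with projection $p\colon C\times X\to X$, and choose an effective divisor $A$ on $C$ of large degree. Pushing $0\to\mathcal{P}\to\mathcal{P}(A\times X)\to\mathcal{P}(A\times X)|_{A\times X}\to 0$ forward along $p$ (using $R^1p_*\mathcal{P}(A\times X)=0$) gives an exact sequence $0\to p_*\mathcal{P}\to\mathcal{E}_1\xrightarrow{\sigma}\mathcal{E}_0\to R^1p_*\mathcal{P}\to 0$ with $\mathcal{E}_0,\mathcal{E}_1$ locally free. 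Set $\sG:=\Coker(\sigma^\vee\colon\mathcal{E}_0^\vee\to\mathcal{E}_1^\vee)$, a coherent sheaf on $X$ of rank $r=n$ and homological dimension $\le 1$; then $\sExt^1(\sG,\sO_X)\cong\Coker(\sigma)=R^1p_*\mathcal{P}$ has rank $0$, and fibrewise $\sG(L)=H^0(C,L)^\vee$, $\sExt^1(\sG,\sO_X)(L)=H^1(C,L)$ by relative Serre duality. Hence, as worked out in \cite{JL18}, one obtains identifications over $X$: $\PP(\sG)\cong C^{(g-1+n)}$ with $\pi=AJ$ and $\sO_{\PP(\sG)}(1)=\zeta$ the standard polarization class, and $\PP(\sExt^1(\sG,\sO_X))\cong C^{(g-1-n)}$ with $\pi'=AJ^\vee$. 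Under these, $\Gamma=\PP(\sG)\times_X\PP(\sExt^1(\sG,\sO_X))$ is precisely the incidence variety of diagram (\ref{diag:AJ}), and $\Gamma^*=r_{+\,*}r_-^*$, $\pi_i^*(\blank)=\zeta^i\cdot AJ^*(\blank)$ are exactly the operations appearing in the statement.

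The second step is to verify the hypotheses of Theorem \ref{thm:main}. By Riemann--Roch and Serre duality, $X^{\ge r+i}(\sG)=\{L\mid h^1(L)\ge i\}$ is, via the involution $L\mapsto K_C\otimes L^\vee$, identified with the Brill--Noether locus $W^{i-1}_{g-1-n}(C)\subset\Pic^{g-1-n}(C)$. Now $C^{(g-1\pm n)}$ are smooth projective (so Chow's moving lemma applies), $AJ^\vee$ is birational onto $X^{\ge r+1}(\sG)\cong W^0_{g-1-n}$, which has codimension $r+1$ (as $g-1-n\le g-1$), and the remaining estimate $\codim(X^{\ge r+i}(\sG)\subset X)\ge r+2i$ for $i\ge 2$ --- equivalently $\dim W^{\rho}_{g-1-n}\le (g-1-n)-2\rho-1$ for $\rho\ge1$ --- together with irreducibility of $\Gamma$, puts us under condition (A); for a sufficiently general $C$ these loci even have the expected Brill--Noether codimension, so condition (B) applies. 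Theorem \ref{thm:main} then yields the displayed Chow-group isomorphism; and since $C^{(g-1+n)}$, $C^{(g-1-n)}$, $\Pic^{g-1+n}(C)$ are all smooth projective, Corollary \ref{cor:main.motif} applies verbatim and gives the isomorphism of Chow motives, with the Tate twists realized by $h^i\circ(AJ)^*$ and $[\Gamma]^t$. The cases $n\ge g$ (in particular all of $g=1$) are immediate: there $\sG$ is locally free, $\sExt^1(\sG,\sO_X)=0$, $C^{(g-1-n)}=\emptyset$, and the statement reduces to the projective bundle formula.

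The part I expect to be the main obstacle is making the verification of (A) (or (B)) work for \emph{every} curve, not just general ones, since the two Brill--Noether inputs above can fail literally for special curves: for hyperelliptic $C$, Martens' theorem only gives $\dim W^{\rho}_{g-1-n}\le (g-1-n)-2\rho$, and the incidence scheme $\Gamma$ can even be reducible (its degree-$\binom{2g-2}{g-1+n}$ cover of $|K_C|\cong\PP^{g-1}$ breaks into components indexed by how many complete $g^1_2$-pairs a fibre of $K_C$ contributes). For such curves I would either appeal to the refined, stratum-by-stratum dimension estimates that actually drive the proof of Theorem \ref{thm:main}, carried out over the Brill--Noether filtration as in \cite{JL18}, or else treat the finitely many classes of special curves directly, exploiting the extra structure of their symmetric powers (for hyperelliptic $C$ the $g^1_2$ exhibits every $C^{(d)}$ as an explicit tower of projective bundles and blow-ups over abelian varieties, from which the decomposition can be read off). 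Once the hypotheses are secured, the remainder is a verbatim invocation of Theorem \ref{thm:main} and Corollary \ref{cor:main.motif}.
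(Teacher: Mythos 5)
Your setup is exactly the one used in the paper: realize $AJ$ as $\PP(\sG)\to X=\Pic^{g-1+n}(C)$ for a sheaf $\sG$ of homological dimension $\le 1$ and rank $n$, identify $\sExt^1(\sG,\sO_X)$ with the sheaf whose fibres are $H^1(C,L)$ so that $\PP(\sExt^1(\sG,\sO_X))\cong C^{(g-1-n)}$ via $AJ^\vee$, translate the degeneracy stratification of $\sG$ into Brill--Noether loci, and invoke Theorem \ref{thm:main} / Corollary \ref{cor:main.motif}. You also correctly identify the real obstacle: for hyperelliptic $C$ of genus $g\ge 3$, Martens' theorem only gives $\dim W^k_d=d-2k$, so condition (\ref{eqn:weak.dim}) fails by exactly one and $\Gamma$ can be reducible, so neither (A) nor (B) applies literally. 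That diagnosis matches the paper.

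The gap is that you do not actually close the hyperelliptic case, and neither of your two proposed fixes is the route the paper takes. Option (a) --- ``appeal to the refined, stratum-by-stratum dimension estimates'' --- cannot work as stated: those estimates are precisely what fail for hyperelliptic curves, so the dimension counts that drive both the moving-lemma proof and the stratum-by-stratum proof genuinely break down. Option (b) --- an explicit analysis of $C^{(d)}$ for hyperelliptic $C$ via the $g^1_2$ --- is a different approach that is not carried out and would in any case require a separate argument to produce the isomorphism in the stated form (via $\Gamma^*$ and $\zeta^i\cap AJ^*$). The paper instead resolves the hyperelliptic case by a deformation-and-specialization argument: choose a disc $D\subset\shM_g$ through $[C]$ transverse to the hyperelliptic locus, so the generic fibre of the universal curve $\sC/D$ is non-hyperelliptic; then condition (A) holds for the relative construction $\Hilb_{g-1\pm n}(\sC/D)$, and the identities $\Gamma_*\Gamma^*=\Id$, $\Id=\Gamma^*\Gamma_*+\sum_i\pi_i^*\pi_{i\,*}$, etc.\ specialize along $D$ to the central fibre by the specialization maps of \cite[Ch.~10]{Ful}, yielding the stated isomorphism for the hyperelliptic curve. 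You should replace your two speculative options with this specialization argument (or fully carry out option (b) and check it produces the same map).
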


Notice that $C^{(g-1-n)} = \emptyset$ if $n>g-1$, hence the result is most interesting if $0 \le n \le g-1$. To prove the corollary, we show that the above situation fits into the picture of Theorem \ref{eqn:main.thm} and satisfies condition (A) (if $C$ is not hyperelliptic). 

Set $X: = \Pic^{g-1+n}(C)$, and let $D$ be an effective divisor of large degree on $C$. $\forall \sL \in \Pic(X)$, the exact sequence
    $0 \to \sL \to \sL(D) \to \sL(D)|_D \to 0$ induces an exact sequence:
    $$0 \to H^0(C, \sL) \to H^0(C, \sL(D)) \xrightarrow{\mu_{D}} H^0(C, \sL(D)|_D) \to H^1(C, \sL) \to 0.$$
Globalizing (the dual of) above sequence yields the desired picture: let $\sL_{\rm univ}$ be the universal line bundle of degree $g-1+n$ on $C \times X$, and $\pr_C, \pr_X$ be obvious projections, then
	$$\sE := (\pr_{X *} (\pr_{C}^* \sO(D) \otimes \sL_{\rm univ}))^\vee \quad \text{and} \quad \sF := (\pr_{X *} (\pr_{C}^* \sO_D(D) \otimes \sL_{\rm univ}))^\vee$$
are vector bundles on $X$ of ranks $\deg(D) + n$ and $\deg(D)$, with
a short exact sequence
	$$0 \to \sF \xrightarrow{\sigma = \mu_D^\vee} \sE \twoheadrightarrow \sG \to 0,$$
where $\sG := {\rm Coker}(\sigma)$ is the sheafification of  $H^{0}(C,\sL)^\vee$, has homological dimension $\le 1$ and rank $n$, and $\sK: = \sExt^1(\sG,\sO_X) = {\rm Coker}(\sigma^\vee)$ is the sheafification of $H^{1}(C,\sL)$. Therefore
	$$ C^{(d)}\equiv C^{(g-1+n)} \simeq \PP(\sG), \quad \text{and} \quad C^{(d')} \equiv C^{(g-1-n)} \simeq  \PP(\sK).$$

Then the stratification $X_i: = X^{\ge n+i+1}(\sG)$ for $i \ge -1$ of the Theorem \ref{eqn:main.thm} corresponds to Brill--Noether loci as follows (recall $d= g-1+n$, $d'=g-1-n$): 
	$$X_i = W_{d}^{n+i} \simeq W_{d'}^{i}$$
Recall the following facts from \cite{ACGH}: 
\begin{enumerate}
	\item \textit{(Brill--Noether inequality)}  The expected dimension of $W_d^k$ is the {\em Brill--Noether number} $\rho(g,k,d) := g - (k+1)(g-d+k)$. The following holds:
	$ \dim W_d^k  \ge  \rho(g,d,k),$
and $W_{d}^k \ne \emptyset$ if $\rho(g,k,d) \ge 0$, $W_{d}^k$ is connected if $\rho(g,k,d) \ge 1$. 	
	\item \textit{(Clifford's inequality)} For an effective divisor $D$ of degree $d$, $1 \le d \le 2g-1$, then $r(D) : = \dim H^{0}(C,\sO(D)) - 1 \le \frac{1}{2} d$.
	\item \textit{(Martens theorem)}  Assume $g \ge 3$, and $(d,k) \in \{2 \le d \le g-1, 1 \le k \le \frac{d}{2}\} \cup \{g-1 \le d \le 2g-4, d-g +2 \le k \le \frac{d}{2}\}$. If $C$ is not hyperelliptic, then 
	$\dim W_d^k \le d - 2k - 1$.
If $C$ is hyperelliptic, then $\dim W_d^k = d - 2k$.
\end{enumerate}

\begin{proof}[Proof of Corollary \ref{cor:SymC}.]  
\medskip \noindent \textit{Uninteresting cases.}
Note that $\Gamma \ne \emptyset$ if and only if  $0 \le n \le g-1$; The corollary for the cases $n \ge g$ follow from the projective bundle formula Theorem \ref{thm:proj.bundle}. If $n = g-1$, then $AJ^\vee \colon C^{(g-1-n)} \simeq \{ [\omega_C] \}\in \Pic^{2g-2}(C)$, and $\Gamma = \PP_{\rm sub}(H^0(C,\omega_C))\simeq \PP^{g-1} \subset C^{(2g-2)}$, and the fibered diagram is a Cayley's trick diagram with $Z = \{[\omega_C]\}$ a point, then the results follows from Theorem \ref{thm:Cayley}. Hence we need only consider the case $0 \le n \le g-2$ and $g \ge 2$. If $g=2$, then $n=0$, $d=d'=1$, $\Gamma \simeq C$, $\Gamma^* \colon \CH(C) \simeq \CH(C)$ is the isomorphism induced the hyperelliptic involution on $C$. Hence we may assume from now on $g \ge 3$, $0 \le n \le g-2$ and $d =g-1+n \in [g-1, 2g-3]$.

\medskip\noindent  \textit{The case when $g \ge 3$ and $C$ is not hyperelliptic.}
We show the condition (A) is satisfied, i.e.
	$${\rm codim} (W_d^n \subset X) = n+1, \quad {\rm codim} (W_{d}^{n+i} \subset X) \ge n+2i+2 \quad \text{for $i \ge 1$}.$$
The first equality always holds, since $C^{(d')}$ maps birationally onto $W_d^n \simeq W_{d'}^0$. For the second inequality, notice that if $d = 2g-3$, $n=g-2$, then $W_d^{n+i} = \emptyset$ if $i \ge 1$ by Clifford's inequality, since $2n+2i = 2g - 4 +2i > d$ if $i \ge 1$. Hence we may assume $d  \in [g-1, 2g-4]$, and Marten's theorem can be applied. Therefore if $C$ is not hyperelliptic, then for any $i \ge 1$:
	$${\rm codim} (W_{d}^{n+i} \subset X) \ge g - (d - 2(n+i) -1)= g- (g-1+n) + 2(n+i) +1= n + 2i +2.$$
	
\medskip \noindent  \textit{The case $g \ge 3$, and $C$ is hyperelliptic.} Take a disc $D$ in the moduli space $\shM_g$ intersecting transversely to the hyperelliptic locus, with zero point $[C]$, and consider the universal curve $\sC$ over $D$. Then the general fiber of $\sC$ is non-hyperelliptic, and by above estimates condition (A) is satisfied by the family $\sC$ (with relative Hilbert schemes $\Hilb_{g-1 \pm n}(\sC/D)$ of zero dimensional subscheme on the fibres of length $g-1 \pm n$) as well as  the generic fibre $\sC_{\eta}$. Therefore the identities of the maps between Chow groups (e.g. $\Gamma_* \, \Gamma^* = \Id$, decomposition of $\Id = \Gamma^* \Gamma_* + \sum_i \pi_i^* \pi_{i\,*}$, etc) of Theorem \ref{thm:main} for $\Hilb_{g-1\pm n}(\sC/D)$ (or $\sC_{\eta}^{(g-1\pm n)}$) specialize to the same identities for the central fiber $\sC_0 = C$ (see \cite[Ch. 10]{Ful}), hence induces the isomorphism of Corollary \ref{cor:SymC} for the hyperelliptic curve $C$.
 \end{proof}

\begin{remark} \label{rmk:SymC} The isomorphisms of Corollary \ref{cor:SymC} are over the ring $\ZZ$. If working with rational coefficients, as pointed out to us by the referee, one could also deduce the $\QQ$-linear version of Corollary \ref{cor:SymC} from del Ba{\~n}o's works \cite{dB1,dB2}: \cite[Proposition 3.7]{dB2} implies 
	$$\foh_\QQ(C^{(n)}) \simeq \bigoplus_{n_0 + n_1 + n_2 = n} 1^{\otimes n_0} \otimes  \lambda^{n_{1}} \foh_\QQ^{1}(C)  \otimes \LL^{\otimes n_2} =  \bigoplus_{n_0 + n_1 + n_2 = n} (\lambda^{n_{1}} \foh_\QQ^{1}(C))(n_2) ,$$
where $n_0, n_1, n_2$ are non-negative integers, $\foh_\QQ(\blank) = \foh(\blank) \otimes \QQ$, and $\lambda^n$ is the $\lambda$-structure on the $\QQ$-linear pseudoabelian category of effective Chow motives. By \cite{dB1}:
	$$\foh_\QQ({\rm Jac}(C)) =\bigoplus_{k=0}^{2g} \lambda^k \foh_\QQ^1(C).$$
Combining these two formulae, one obtains the desired result for $\foh_\QQ$. 
By using Ba{\~n}o's works \cite{dB1,dB2} as above, \cite[Proposition 1.6]{GL} also independently obtains the isomorphism of Chow motives of Corollary \ref{cor:SymC} with rational coefficients.
\end{remark}

\subsection{Nested Hilbert schemes of surfaces} \label{sec:Hilb} 
Let $S$ be a smooth surface over $\CC$, for $n \ge 0$, denote $\Hilb_n=\Hilb_n(S)$ the Hilbert scheme of $n$-points on $S$, i.e. $\Hilb_n$ parametrizes colength $n$ ideals $I_{n} \subset \sO_S$ (or equivalently, length $n$ zero dimension subschemes $\zeta_n=V({I_n}) \subset S$). Furthermore , define the {\em nested} Hilbert scheme by:
	\begin{align*}
	\Hilb_{n,n+1}  = \{(I_{n+1} \subset I_n)  \mid I_{n}/I_{n+1} \simeq \CC(x),  \text{~for some $x \in S$}\} \subset \Hilb_n \times \Hilb_{n+1}.
	\end{align*}
Then $\Hilb_{n,n+1}$ parametrizes zero-dimensional subschemes $\eta_n = V(I_n) \subset \eta_{n+1} = V(I_{n+1}) \subset S$ of length $n$ and $n+1$ respectively, such that $\eta_{n+1} / \eta_n = \CC(x)$ for some $x \in S$. Similarly, one can consider higher {\em nested} Hilbert scheme:
	\begin{align*}
	\Hilb_{n-1,n,n+1}  = \{I_{n+1} \subset I_{n} \subset I_{n-1} \mid  I_{n}/I_{n+1} \simeq \CC(x),  I_{n}/I_{n-1} \simeq \CC(x), \text{~for some $x \in S$} \}.
	\end{align*}
Let $X = \Hilb_n(S) \times S$, and let $Z_n \subset X$ be the universal subscheme. Then $X$ is smooth, and $Z_n \subset X$ is a Cohen--Macaulay subscheme of codimension $2$.  

The following is summarised from Ellingsrud-- Str{\o}mme \cite{ES}, Negu{\c{t}} \cite{Neg, Neg1} and  Maulik--Negu{\c{t}} \cite[Proposition 6.3 \& 6.8]{MN19}.
\begin{lemma} \label{lem:hilb}
\begin{enumerate}
	\item \label{lem:hilb-1}
$\Hilb_{n,n+1}(S) = \PP(\sI_{Z_n}) = \Bl_{Z_n} (X)$ is smooth of dimension $2n+2$;
	\item \label{lem:hilb-2}
	$ \Hilb_{n-1,n}(S) = \PP(\sExt^1(\sI_{Z_n},\sO_X)) = \PP(\omega_{Z_n})$ is smooth of dimension $2n$;
	\item \label{lem:hilb-3}
	 $\Hilb_{n-1,n,n+1}(S) = \Hilb_{n-1,n}(S) \times_{X} \Hilb_{n,n+1}(S)$ is smooth of dimension $2n+1$.
\end{enumerate}
\end{lemma}

Consider the fibered diagram:
\begin{equation*}\label{diag:Hilb}
    	\begin{tikzcd}[row sep=0.5 em, column sep=3 em]
    	  	&\Gamma_n: = \Hilb_{n-1,n,n+1}(S) \ar{ld}[swap]{r_{-}} \ar{rd}{r_{+}}& \\
	 \Hilb_{n-1,n}(S)   \ar{rd}[swap]{\pi_{-}}& & \Hilb_{n,n+1}(S) \ar{ld}{\pi_{+}} \\
		& X= \Hilb_n(S) \times S.
	\end{tikzcd}
    \end{equation*}   
\begin{corollary}\label{cor:Hilb}
\begin{enumerate}
	\item \label{cor:Hilb-1}
	For any $k \ge 0$, there is an isomorphism of Chow groups:
	\begin{align*}
	\CH_{k-1}(\Hilb_{n-1,n}(S)) \oplus \CH_{k}(\Hilb_n(S) \times S)    \xrightarrow{\sim} \CH_{k}(\Hilb_{n,n+1}(S))
	\end{align*}
given by $(\gamma,  \alpha) \mapsto   \beta =\Gamma_n^* \gamma + \pi_+^*\alpha$. The same map also induces:
	$$
	[\Gamma_n]^t \oplus \pi_+^* \colon \foh(\Hilb_{n-1,n}(S))(1) \oplus \foh(\Hilb_n(S) \times S)    \xrightarrow{\sim} \foh(\Hilb_{n,n+1}(S)).
	$$
 	\item \label{cor:Hilb-2}
	If we consider the ``Zig-Zag shape" diagram of length $d \in [0,n]$:
	\begin{equation*}
	\begin{tikzcd} [row sep=1 em, column sep=1 em]
	  & \Gamma_{n-d+1}  \ar{ld}{r_-} \ar{rd}[swap]{r_+} &  & \cdots \ar{ld}{r_-} \ar{rd}[swap]{r_+} &  & \Gamma_n \ar{ld}{r_-} \ar{rd}[swap]{r_+}  & \\
	\Hilb_{n-d,n-d+1} & & \Hilb_{n-d+1,n-d+2} & & \Hilb_{n-1,n} & & \Hilb_{n,n+1}
	\end{tikzcd}
	\end{equation*}
Then it follows from \eqref{cor:Hilb-2} that following maps are split injective:
	\begin{align*}
	&\Gamma_n^* \, \Gamma_{n-1}^* \, \cdots \Gamma_{n-d+1}^* \colon \CH_{k-d}(\Hilb_{n-d,n-d+1}) \hookrightarrow \CH_k(\Hilb_{n,n+1}),& \text{for} ~d=1,2,\ldots,n;\\
	&\Gamma_n^* \, \Gamma_{n-1}^* \, \cdots \Gamma_{n-d+1}^*  \, \pi_{+}^* \colon \CH_{k-d}( \Hilb_{n-d} \times S)  \hookrightarrow \CH_{k}(\Hilb_{n,n+1}), &\text{for}~d=0,1,\ldots, n,
	\end{align*}
and similarly for Chow motives. (Note that $\Gamma_i^* = r_{+\,*}\, r_{-}^{*}$ as usual.)
	\item \label{cor:Hilb-3}
	The maps $\Gamma_n^* \cdots \Gamma_{n-d+1}^* \pi_{+}^* $ for $d\in [0,n]$ from \eqref{cor:Hilb-2} induce decompositions:
	\begin{align*} \CH_{k}(\Hilb_{n,n+1}(S))  = &  \CH_{k}(\Hilb_n(S) \times S) \oplus \CH_{k-1}(\Hilb_{n-1}(S) \times S) \\
	& \oplus \ldots \oplus  \CH_{k-n+1}(S \times S)\oplus \CH_{k-n}(S), \qquad \forall k \ge 0\\
	\foh(\Hilb_{n,n+1}(S))  = &  \foh(\Hilb_n(S) \times S) \oplus \foh(\Hilb_{n-1}(S) \times S)(1) \\
	& \oplus \ldots \oplus  \foh(S \times S)(n-1) \oplus \foh(S)(n).			\end{align*}
\end{enumerate}
\end{corollary}

The above results are especially interesting in the case when $S$ is a $K3$ surfaces, see \cite{Yin14, MN19, Ob19}. Note that the map $\Gamma_{n} \circ  \Gamma_{n-1}$ is also given by the correspondence $[\Gamma_{n}] * [\Gamma_{n-1}] = [\Hilb_{n-2,n-1,n,n+1}]$. This is because the fiber squares for the fiber product
	$$\Hilb_{n-2,n-1,n,n+1} = \Hilb_{n-2,n-1,n} \times_{ \Hilb_{n-1,n}} \Hilb_{n-1,n,n+1}$$
does not have excess bundle, see \cite[Proposition 2.21]{Neg} (where the result was shown for stable sheaves, but the same proof works for nested Hilbert schemes). 

\begin{proof}[Proof of Corollary \ref{cor:Hilb}] Let $\sG =\sI_Z$. It remains to check condition (A) of Theorem \ref{eqn:main.thm} is satisfied. In fact, notice that $X^{\ge r+i}(\sG) = X^{\ge 1+i}(\sI_{Z_n})$ is the loci where $\sI_{Z_n}$ needs $\ge 1+i$ generators at a pint $(I,x)$, or equivalently 
	$$X^{\ge 1+i}(\sI_{Z_n}) = \{ (I,x) \in \Hilb_n \times S \mid \dim I(x) \ge 1+i \}.$$
It follows from \cite[proof of Proposition 3.2]{ES} that ${\rm codim} (X^{\ge 1+i} \subset X) \ge 2i$ for all $i \ge 1$. On the other hand, we already know that $\Gamma = \Hilb_{n-1,n,n+1}(S)$ is irreducible and of expected dimension by Lemma \ref{lem:hilb} \eqref{lem:hilb-3}, therefore ${\rm codim} (X^{\ge 1+i} \subset X) \ge 1+2i$, and the condition (\ref{eqn:weak.dim}) is satisfied (see Remark \ref{remark:exp:dim} \ref{remark:exp:dim-1}). Finally \eqref{cor:Hilb-2} and \eqref{cor:Hilb-3} follow from \eqref{cor:Hilb-1} .
\end{proof}

\begin{remark} De Cataldo and Migliorini have established the decompositions of the rational Chow groups of $\Hilb_n(S)$ in \cite[Corollary 5.1.5]{dCM1} and $\Hilb_{n,n+1}(S)$ in \cite[Theorem 3.3.1]{dCM2}. In the view of Remark \ref{rmk:SymC}, it is reasonable to expect that one could also deduce the $\QQ$-linear version of the above corollary from the decompositions of $\CH(\Hilb_n(S))_{\QQ}$ and $\CH(\Hilb_{n,n+1}(S))_{\QQ}$ in \cite{dCM1,dCM2}.
\end{remark}

\subsection{Voisin maps.} \label{sec:Voisin} Let $Y \subset \PP_{\CC}^5$ be a cubic fourfold not containing any plane, $F(Y)$ be the Fano variety of lines on $Y$ which is a hyperk{\"a}hler fourfold of type $K3^{[2]}$, $Z(Y)$ be the Lehn--Lehn--Sorger--Van Straten eightfold constructed in \cite{LLSVS17} which is a hyperk{\"a}hler manifold of type $K3^{[4]}$. Voisin construct a rational map $v \colon F(Y) \times F(Y) \dashrightarrow Z(Y)$ of degree six in \cite{Voi16} using geometry of $Y$. In \cite{Chen}, Chen shows that the Voisin map $v$ can be resolved by blowing up the incident locus 
	$$Z = \{ (L_1,L_2) \in F(Y) \times F(Y) \mid L_1 \cap L_2 \ne \emptyset\},$$
using the interpretation \cite{LLMS17} and \cite{LPZ18} of above spaces as moduli of stable objects in the Kuznetsov component $Ku(Y) = \langle \sO_Y, \sO_Y(1), \sO_Y(2) \rangle^{\perp}$ \cite{Kuz10}, with respect to a Bridgeland stability condition $\sigma$ on $Ku(Y)$ constructed in \cite{BLMS17}.

More precisely, following \cite{Chen}, the Voisin map can be viewed as family of extensions $v \colon M_{\sigma}(\lambda_1) \times M_{\sigma}(\lambda_1+\lambda_2) \dashrightarrow M_{\sigma}(2 \lambda_1 + \lambda_2)$ as follows, where $\lambda_1, \lambda_2 \in \shK_{\rm num}(Ku(Y))$ are the natural basis of an $A_2$ lattice \cite{AT14}. By the works of \cite{LLMS17} and \cite{LPZ18}, there are identifications of moduli spaces $M_{\sigma}(\lambda_1)=F(Y)$, $M_{\sigma}(\lambda_1+\lambda_2) = F(Y)$ and $M_{\sigma}(2 \lambda_1 + \lambda_2) = Z(Y)$. Let $\shF$, $\shP$ and $\shE$ be the respective pullbacks of the (quasi-)universal objects on $M_{\sigma} \times Y$ to the moduli spaces $M_{\sigma}(\lambda_1)$, $M_{\sigma}(\lambda_1+\lambda_2)$ and $M_{\sigma}(2 \lambda_1 + \lambda_2)$. Then the Voisin map $v$ sends a pair $(F,P) \in  M_{\sigma}(\lambda_1) \times M_{\sigma}(\lambda_1+\lambda_2)$ which satisfies $\dim \Ext^{1}(F,P) = 1$ to the unique class of nontrivial extension of $F$ by $P$.

If we denote $X = F(Y) \times F(Y)$, and let $\sExt_f^i(\sF,\shP)$ be the sheafification of the group $\Ext^i(F,P)$ for the family $f \colon X \times Y \to X$, the following is proved in Chen's work \cite{Chen}: 
	\begin{enumerate}
		\item $\sExt_f^1(\sF,\shP) =\sI_Z$ (where  $\sI_Z$ is the ideal sheaf of $Z \subset X$, and $Z$ is the incident locus $\{L_1 \cap L_2 \ne \emptyset\}$ defined above), has homological dimension $1$, and $Z \subset X=F(Y) \times F(Y)$ is Cohen--Macaulay of codimension $2$.
		\item The degeneracy loci of $\sExt_f^1(\sF,\shP) =\sI_Z$ over $X$ are given by ($X = X^{\ge 1}(\sI_Z)$, and): 
			\begin{align*}
			Z &=X^{\ge 2}(\sI_Z) = \{(F,P) \mid \dim \Ext^1(F,P) \ge 2\}, \\
		 	\Delta_2 &= X^{\ge 3}(\sI_Z) = \{(F,P) \mid \dim \Ext^1(F,P) \ge 3\},
			\end{align*}
		and $X^{\ge 1+i}(\sI_Z) = \emptyset$ for $i \ge 3$. Here $\Delta_2 \subset F(Y) \times F(Y)$ is the type II locus:
			$\{L \in \Delta \simeq F(Y) \mid \sN_{L/Y} \simeq \sO(1)^{\oplus 2} \oplus \sO(-1)\},$ which is an algebraic surface \cite{Voi16}.
		\item $\sExt^1(\sI_Z,\sO_X) = \sExt^2_f(\shP, \shF) = \omega_Z$, where $\omega_Z$ is the dualizing sheaf of $Z$.
		\item The blowing up $\pi \colon\PP(\sI_Z)  =  \Bl_Z (F(Y) \times F(Y)) \to F(Y) \times F(Y)$ resolves the Voisin map $v$, and if $Y$ is very general (i.e. $\shK_{\rm num}(Ku(Y)) = A_2$), then the resolved Voisin map $\widetilde{v} \colon \Bl_Z (F(Y) \times F(Y)) \to Z(Y)$ is (the projection of) a relative $Quot$-scheme 
			$$\Bl_Z (F(Y) \times F(Y)) = Quot_{Ku(Y)/Z(Y)}(\shE, \lambda_1 + \lambda_2)$$
		 of stable quotients of $\shE$ inside $\shA \subset Ku(Y)$ over $Z(Y)$, where $\shA$ is the heart of $\sigma$.
	\end{enumerate}
Therefore the sheaf $\sI_Z$ satisfies condition (B) of Theorem \ref{eqn:main.thm}. If we  consider
	$$\pi' \colon \widetilde{Z}: = \PP_X(\sExt^2_f(\shP, \shF)) = \PP_Z(\omega_Z) \to X$$
which is a small (partial) resolution of the incidence locus $Z$. Then the projection $ \widetilde{Z} \to Z \subset X$ is an isomorphism over $Z \backslash \Delta_2$, and a $\PP^1$-bundle over $\Delta_2$. Therefore we have a diagram:
	\begin{equation*}\label{diag:Hilb}
    	\begin{tikzcd}[row sep=0.6 em, column sep= 1 em]
    	  	&\Gamma: = \widetilde{Z} \times_X  \Bl_Z (F(Y) \times F(Y)) \ar{ld}[swap]{r_{-}} \ar{rd}{r_{+}}& \\
	\widetilde{Z}= \PP(\sExt^2_f(\shP, \shF))  \ar{rd}[swap]{\pi'}& & \Bl_Z (F(Y) \times F(Y))\ar{ld}{\pi} \\
		& X= F(Y) \times F(Y) 
	\end{tikzcd}
    \end{equation*}   
 
\begin{corollary}\label{cor:Voisin}
For any $k \ge 0$, there is an isomorphism of Chow groups:
	$$\Gamma^* \oplus \pi^* \colon \CH_{k-1}(\widetilde{Z}) \oplus \CH_k(F(Y) \times F(Y)) \xrightarrow{\sim} \CH_k(\Bl_Z ( F(Y) \times F(Y))),$$
where $\Gamma^* = r_{+\,*} \, r_-^*$ as usual. If $\widetilde{Z}$ and $\Bl_Z ( F(Y) \times F(Y))$ are smooth, then the same map induces an isomorphism of Chow motives:
	$$
	[\Gamma]^t \oplus \pi^* \colon \foh(\widetilde{Z})(1) \oplus \foh((F(Y) \times F(Y)) \xrightarrow{\sim} \foh(\Bl_Z ( F(Y) \times F(Y))).
	$$
\end{corollary}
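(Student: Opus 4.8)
The plan is to read the corollary off from Thm.~\ref{thm:main} in the special case $r = 1$, applied under hypothesis (B). First I would set $X = F(Y)\times F(Y)$ and $\sG = \sI_Z$: as $F(Y)$ is a smooth projective fourfold, $X$ is a smooth projective $8$-fold, in particular Cohen--Macaulay of pure dimension, and by the properties of $\sI_Z$ recalled above from \cite{Chen} this $\sG$ has rank $r = 1$ and homological dimension $\le 1$, with $\sExt^1(\sG,\sO_X) = \omega_Z$. Hence $\PP(\sExt^1(\sG,\sO_X)) = \PP_Z(\omega_Z) = \widetilde{Z}$ and $\PP(\sG) = \Bl_Z X$ (as in \S\ref{sec:CM2}), and the fiber product appearing in the diagram before the corollary is exactly the $\Gamma$ of diagram~(\ref{diagram:Gamma}), with $\pi$, $\pi'$ and $\Gamma^* = r_{+\,*}\,r_-^*$ as there.

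Then I would verify that hypothesis (B) holds, i.e.\ that every degeneracy locus $X^{\ge 1+i}(\sI_Z)$ has the expected codimension $i(1+i)$ in $X$. This is immediate from Chen's description of these loci: $X^{\ge 2}(\sI_Z) = Z$ has codimension $2 = 1\cdot 2$ (it is Cohen--Macaulay of codimension $2$); $X^{\ge 3}(\sI_Z) = \Delta_2$ is an algebraic surface inside the $8$-fold $X$, hence has codimension $6 = 2\cdot 3$; and $X^{\ge 1+i}(\sI_Z) = \emptyset$ for $i\ge 3$, so the bound is vacuous there. Equivalently, every nonempty stratum $X^{\ge 1+i}(\sI_Z)\backslash X^{\ge 2+i}(\sI_Z)$ is a locally complete intersection of expected dimension, which is all that is really used; see remark~(iv) after Thm.~\ref{thm:main}.

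Finally, with (B) in place, Thm.~\ref{thm:main} for $r = 1$ produces at once, for every $k\ge 0$, the isomorphism $(\alpha,\gamma)\mapsto \pi^*\alpha + \Gamma^*\gamma$ from $CH_k(X)\oplus CH_{k-1}(\widetilde{Z})$ onto $CH_k(\Bl_Z X)$ --- the sum $\sum_{i=0}^{r-1} c_1(\sO_{\PP(\sG)}(1))^i\cap\pi^*\alpha_i$ reduces to its $i=0$ term when $r=1$ --- which is the stated Chow-group isomorphism after swapping the two summands. For the motivic assertion, once $\widetilde{Z}$ and $\Bl_Z X$ are assumed smooth, the three schemes $X$, $\PP(\sG) = \Bl_Z X$ and $\PP(\sExt^1(\sG,\sO_X)) = \widetilde{Z}$ are all smooth and projective over $\CC$ ($\widetilde{Z}$ and $\Bl_Z X$ being projective over $Z$ and $X$), so Cor.~\ref{cor:main.motif} applies with $r=1$ and yields $\pi^*\oplus[\Gamma]^t\colon \foh(X)\oplus\foh(\widetilde{Z})(1)\xrightarrow{\sim}\foh(\Bl_Z X)$. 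I do not anticipate a real obstacle here: Thm.~\ref{thm:main} under (B) is formal once the codimensions are known, so the one piece of genuine input is the geometry of the degeneracy loci of $\sI_Z$ --- in particular that $\Delta_2$ is a surface --- which is supplied by \cite{Chen} and recorded in this subsection.
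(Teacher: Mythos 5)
Your proposal is correct and follows essentially the same route the paper intends: identify $\PP(\sI_Z) = \Bl_Z X$ and $\PP(\sExt^1(\sI_Z,\sO_X)) = \widetilde{Z}$, check condition (B) from the codimensions of $Z$ and $\Delta_2$ supplied by \cite{Chen} (with the case $i\ge 3$ vacuous since those loci are empty), and then read the statement off from Thm.~\ref{thm:main} and Cor.~\ref{cor:main.motif} with $r=1$. The paper's own treatment is exactly this, just compressed, so there is nothing to add.
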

Note that it follows from \cite{JL18} there is a semiorthogonal decomposition:
	$$D(\Bl_Z ( F(Y) \times F(Y))) = \langle D(F(Y) \times F(Y)), D(\widetilde{Z})\rangle, $$
therefore $\widetilde{Z}$ is smooth if and only if $\Bl_Z ( F(Y) \times F(Y))$ is. If this is the case \footnote{In fact, one can show $\widetilde{Z}$ is smooth if $\Delta_2$ is a smooth surface (see e.g. \cite[Lemma B3]{JL18}); Amerik \cite{Am09} shows that $\Delta_2$ is smooth for a general $Y$. On the other hand, if $Y$ is very general, Chen's interpretation \cite{Chen} of $\widetilde{Z}$ as a Quot-scheme over $X$ shows that $\Bl_Z X$ is smooth.}, since the resolution $\widetilde{Z} \to Z$ is $\IH$-small, by taking Betti cohomology realisation of the Chow motives, above map induces isomorphisms of Hodge structures:
	\begin{align*}
	H^n(\Bl_Z ( F(Y) \times F(Y)),\QQ) & \simeq H^n(F(Y) \times F(Y),\QQ) \oplus H^{n-2}(\widetilde{Z},\QQ)  \\
	& \simeq H^n(F(Y) \times F(Y),\QQ) \oplus \IH^{n-2}(Z,\QQ),
	\end{align*}
for any $n \ge 0$, where $\IH$ is the intersection cohomology.

\medskip
\begingroup
   \small
 
\subsection*{Further speculations} 
\begin{enumerate}[leftmargin=*]

\item Let $\sigma \colon \sF \to \sE$ be a map between vector bundles over a Cohen--Macaulay scheme $S$, then there is a section map $s_\sigma \colon S \to |\Hom(\sE,\sF)|$. The condition (B) of Theorem \ref{eqn:main.thm} always holds over $ |\Hom(\sE,\sF)|$. Assume that a suitable relative Chow theory $\CH(X \to S)$ has a well-behaved Tor-independent base-change theory, similar to the base-change theory for derived categories \cite{Kuz11SOD}. Then one can pull back along the section map $\sigma$ and obtain a projectivization formula for $S$ under a much weaker condition. The candidate theories that the author has in mind are Fulton's bivariant intersection theory \cite[Chapter 17]{Ful}, the theory of pure Chow motives over a base $S$ \cite{CH}, and the theory of Higher Chow groups over a base $S$ \cite[Chapter II]{Lev}.

\item This work is inspired by its counterpart in derived categories \cite{JL18}, where the projectivization formula was proved using the techniques developed in \cite{JLX17,RT,KuzHPD}. It is interesting whether or not one can ``decategorify" other interesting semiorthogonal decompositions obtained by these techniques. Examples include various cases of homological projective duality and flops, see \cite{JLX17, JL18, RT,KuzHPD}. Note that usually, results of derived categories only imply {\em ungraded} results for {\em rational} Chow groups and motives; but see \cite{BT16} where essential {\em graded} information of Chow groups is recovered from derived categories.

\item The projectivization formula for derived categories is closely related to the wall--crossing and d-critical flips studied by Toda \cite{Tod2,Tod}. It would be interesting to extend the results of this paper to the cases of Donaldson-Thomas type moduli spaces considered there.

\item The projectivization formula considered in this paper fits into a broad framework of the study of Quot schemes of locally free quotients \cite{J20,J19}. 

\item Since the resolution $\PP(\sExt^1(\sG,\sO_X)) \to X_{\rm sg}(\sG)$ is usually $\IH$-small, it is reasonable to expect one may replace $\CH(\PP(\sExt^1(\sG,\sO_X)))$ by the {\em intersection} Chow group \cite{CH} of $X_{\rm sg}(\sG)$.

\item The projectivization formula of Chow groups should hold for Deligne--Mumford stacks, with $\CH$ replaced by $\CH_{\QQ}$;  It would also be interesting to study the ring structure of $\CH(\PP(\sG))$ in the case when $X$ and $\PP(\sG)$ are smooth.

\end{enumerate}
\endgroup

\end{document}